\documentclass[12pt,reqno]{amsart}
\usepackage{amsmath}
\usepackage{amssymb}
\usepackage{verbatim}
\usepackage{mathrsfs}
\usepackage{graphicx}
\usepackage{caption}
\usepackage{subcaption}
\captionsetup[subfigure]{labelfont=rm}
\usepackage{epstopdf}
\usepackage{makecell}
\usepackage{tabularx}
\usepackage{longtable}
\usepackage{tikz}
\usetikzlibrary{external}
\tikzexternalize
\usepackage{pgfplots}
\usepackage{xcolor}
\usepackage{pgfplots}
\usepackage{pgfplotstable}
\usepackage[ruled,vlined]{algorithm2e}
\usepackage[noend]{algpseudocode}
\usepackage[toc]{appendix}
\usepackage[capitalise]{cleveref}
\usepackage[noadjust]{cite}
\usepackage{color}
\usepackage{extarrows}

\usetikzlibrary{arrows}

\usepackage[top=0.6in,bottom=0.5in,left=0.7in,right=0.7in]{geometry}

\newtheorem{lemma}{Lemma}

\newtheorem{theorem}[lemma]{Theorem}

\newtheorem{example}{Example}

\numberwithin{equation}{section}
\numberwithin{lemma}{section}

\newcommand{\N}{\mathbb{N}}    
\newcommand{\R}{\mathbb{R}}    


\newcommand{\ra}{\textsf{r}}
\newcommand{\ca}{\textsf{c}}
\newcommand{\ka}{\textsf{k}}

\newcommand{\Z}{\mathbb{Z}}    
\DeclareMathOperator\supp{supp}

\newcommand{\be}{ \begin{equation} }
	\newcommand{\ee}{ \end{equation} }



\begin{document}
	
	\title[]{A Derivative-Orthogonal Wavelet Multiscale Method for 1D Elliptic Equations with Rough Diffusion Coefficients
	}
	
\author{Qiwei Feng and Bin Han}

\thanks{Research supported in part by  the Mathematics Research Center, Department of Mathematics, University of Pittsburgh, Pittsburgh, PA, USA and Natural Sciences and Engineering Research Council (NSERC) of Canada under grants RGPIN-2024-04991.
}	
	
\address{Department of Mathematics, University of Pittsburgh, Pittsburgh, PA 15260 USA.
	\quad {\tt qif21@pitt.edu, 	 fengq2023@gmail.com}}	
	
\address{Department of Mathematical and Statistical Sciences,
	University of Alberta, Edmonton, Alberta, Canada T6G 2G1.
	\quad {\tt bhan@ualberta.ca}
}

	\makeatletter \@addtoreset{equation}{section} \makeatother

	\begin{abstract}
In this paper, we investigate 1D elliptic equations
$-\nabla\cdot (a\nabla u)=f$ with rough diffusion coefficients  $a$ that satisfy
$0<a_{\min}\le a\le a_{\max}<\infty$
and $f\in L_2(\Omega)$.
To achieve an accurate and robust numerical solution on a coarse mesh of size $H$,
we introduce a derivative-orthogonal wavelet-based framework. This approach incorporates both regular and specialized basis functions constructed through a novel technique, defining a basis function space that enables effective approximation.
We develop a derivative-orthogonal wavelet multiscale method tailored for this framework, proving that the condition number
$\kappa$ of the stiffness matrix satisfies
$\kappa\le a_{\max}/a_{\min}$, independent of $H$. For the error analysis, we establish that the energy and
$L_2$-norm errors of our method converge at first-order and second-order rates, respectively, for any coarse mesh
$H$. Specifically, the energy and
$L_2$-norm errors are bounded by
$2 a_{\min}^{-1/2} \|f\|_{L_2(\Omega)} H$ and $4 a_{\min}^{-1}\|f\|_{L_2(\Omega)} H^2$. Moreover, the numerical approximated solution also possesses the interpolation property at all grid points.
We present a range of challenging test cases with continuous, discontinuous, high-frequency, and high-contrast coefficients $a$ to evaluate errors in
$u, u'$ and $a u'$ in both
$l_2$ and $l_\infty$ norms.
We also provide a numerical example that both coefficient $a$ and source term $f$ contain discontinuous, high-frequency and high-contrast oscillations.
Additionally, we compare our method with the standard second-order finite element method to assess error behaviors and condition numbers when the mesh is not fine enough to resolve coefficient oscillations. Numerical results confirm the bounded condition numbers and convergence rates, affirming the effectiveness of our approach. Thus, our method is capable of handling both the rough diffusion coefficient $a$ and the rough source term $f$, provided that $\|f\|_{L_2(\Omega)}$ is not huge.
\end{abstract}	

	\keywords{Elliptic equations,  rough coefficients, derivative-orthogonal wavelets, multiscale methods, bounded condition numbers, error estimates}

\subjclass[2010]{65N30, 35J15, 76S05}
\maketitle

\pagenumbering{arabic}

	\section{Introduction and problem formulation}\label{introdu:1:intersect}
The elliptic problems
with rough, discontinuous, high-contrast, and high-frequency diffusion coefficients arise in many real-world applications (\cite{AliMankad2020,GuiraAusas2019,ArbogastXiao2013,ArbogastTao2013,Jaramillo2022,Kippe2008,Tahmasebi2018,Butler2020,TArbHXiao2015,Rasaei2008}), such as the demanding SPE10 (Society of Petroleum Engineers 10) benchmark problem.
The discontinuities, high-contrast jumps, and high frequencies result from the permeability variations in the different geological layers of the SPE10 problem  \cite{Vazqu07}. Usually,  the solution of the elliptic equation
with the rough diffusion coefficient contains severe singularities (see e.g. \cite{CaKi01,Kell75,KiKo06,Blumen85,Petz01,Petz02,Nic90,KCPK07,NS94,FGW73,Kell71,Kell72}).
Since diffusion coefficients typically exhibit high-frequency oscillations and encompass multiple spatial scales, employing the standard finite element and finite volume methods with linear basis functions necessitates the fine grid mesh to capture  small-scale features of  coefficients. This requirement of using very fine meshes leads to the high computational cost and large stiffness matrices. To obtain a reliable and precise solution with the detailed oscillation on the coarse mesh size, various multiscale  methods are proposed (see e.g. \cite{Hellmanvist2017,PietroErn2012,lqvistPeterseim2014,EngwerHenning2019,LiHu2021,FuChungLi2019,HouHwang2017,ZhangCiHou2015,lqvistPersson2018,Allaire2005,ChuGrahamHou2010,HouWu1997,HouWuCai1999,JennyLeeTchelepi2004,Nolen2008,Srinivasan2016}).

			In this paper,  we discuss the following 1D elliptic equation with the rough diffusion coefficient in the domain $\Omega:=(0,1)$:
		\be \label{Model:Problem}
\begin{cases}
-\nabla \cdot (a \nabla  u)=f &\text{in $\Omega$,}\\
u=0  &\text{on $\partial \Omega$.}
\end{cases}
		\ee
		We investigate the model problem \eqref{Model:Problem} only under the following two assumptions:
	\begin{itemize}
		
		\item[(A1)]  There exist positive constants $a_{\min}=\inf_{x\in \Omega} a(x)$ and $a_{\max}=\sup_{x\in \Omega} a(x)$ such that $0<a_{\min}\le a\le  a_{\max}<\infty$ in $\Omega$.
		
		\item[(A2)]  The source term $f\in L_2(\Omega)$.
		
	\end{itemize}
The weak formulation of the equation \eqref{Model:Problem} is to find $u\in H^1_0(\Omega)$ such that
	\be\label{week:form}
	\int_\Omega a \nabla u \cdot \nabla v=\int_\Omega f v,\ \ \ \ \mbox{for all}\ v\in H^1_0(\Omega).
	\ee
The organization of this paper is as follows.

In \cref{sec:regular}, we utilize derivative-orthogonal
wavelets to construct  regular basis functions. In \cref{sec:special}, we propose a novel algorithm to develop special basis functions. In \cref{sec:multiscale}, we define our basis function spaces by  regular and special basis functions, and then
derive our derivative-orthogonal wavelet multiscale method. In \cref{TTY2} of \cref{sec:condition:number}, we prove that the condition number of the stiffness matrix of our derivative-orthogonal wavelet multiscale method is bounded by ${ a_{\max} }/{a_{\min}}$ for any coarse mesh size $H$. In \cref{sec:error:estimates},  we derive \cref{Thm:u:prime:L2} and \cref{Thm:u:L2} to theoretically prove that
the energy and
$L_2$-norm errors of our method converge at first-order and second-order rates, respectively, for any coarse mesh
$H$. Specifically, the energy and
$L_2$-norm errors are bounded by
$C_1 H$ and $C_2 H^2$ with
$C_1=2\|f\|_{L_2(\Omega)}/{\sqrt{a_{\min}}}$ and $C_2=4 {\|f\|_{L_2(\Omega)}}/{a_{\min}}$.
We shall also prove in \cref{Thm:u:infty} that the numerical approximated solution also possesses the interpolation property at all grid points.

In \cref{sec:numerical}, we provide numerical experiments to verify convergence rates measured in $l_2$ and $l_{\infty}$ norms. We test numerical examples in the following 6 cases:
\begin{itemize}
	\item  \cref{Special:ex1}: $u$ is available and $a$ is continuous with high-frequency oscillations in $\Omega$ .
	\item  \cref{Special:ex2}: $u$ is available and $a$ is discontinuous with high-contrast jumps in $\Omega$.
	\item  \cref{Special:ex2:infty}: $u$ is available, $a$ is continuous with high-frequency oscillations, 
$\lim_{x\to 0^+}a(x)=\infty$.
	\item  \cref{Special:ex3}: $u$ is unavailable and $a$ is continuous with high-frequency oscillations in $\Omega$.
	\item  \cref{Special:ex4}: $u$ is unavailable and $a$ is discontinuous with high-contrast jumps in $\Omega$.
	\item  \cref{Special:ex6}: $u$ is unavailable,  $a$ and $f$ are both discontinuous with high-contrast jumps and  high-frequency oscillations in $\Omega$.
\end{itemize}
In \cref{Special:ex1,Special:ex2,Special:ex2:infty}, we compare our proposed derivative-orthogonal wavelet multiscale method with the standard second-order
finite element method.

In \cref{Special:sec:Conclu}, we summarize the main contributions of our paper.
	\section{The Derivative-Orthogonal Wavelet Multiscale Method}
We now describe our proposed derivative-orthogonal wavelet multiscale method to solve \eqref{Model:Problem}.

	\subsection{Regular basis functions}\label{sec:regular}		
	In this subsection, we construct regular basis functions by using derivative-orthogonal wavelets \cite{HanMichelle2019} as follows:
	\be\label{phipsi}
	\phi(x)=\begin{cases}
		x+1, &x\in [-1,0),\\
		1-x, &x\in [0,1),\\
		
		0, &\text{else},
	\end{cases}
	\qquad \psi(x)=\begin{cases}
		2x, &x\in [0,\frac{1}{2}),\\
		2-2x, &x\in [\frac{1}{2},1),\\
		0, &\text{else}.
	\end{cases}
	\ee
	Then
	\be\label{phipsi:shift}
	\phi(2x-1)=\begin{cases}
		2x, &x\in [0,\frac{1}{2}),\\
		2-2x, &x\in [\frac{1}{2},1),\\
		0, &\text{else},
	\end{cases}
	\qquad
		(\phi(2x-1))'=\begin{cases}
		2, &x\in [0,\frac{1}{2}),\\
		-2, &x\in [\frac{1}{2},1),\\
		0, &\text{else},
	\end{cases}
	\ee		
	and
	\be\label{phipsi:shift:prime}
	\psi(2^jx-k)=\begin{cases}
		2^{j+1}x-2k, &x\in [\frac{k}{2^j},\frac{1/2+k}{2^j}),\\
		2-2^{j+1}x+2k, &x\in [\frac{1/2+k}{2^j},\frac{1+k}{2^j}),\\
		0, &\text{else},
	\end{cases}
	\qquad (\psi(2^jx-k))'=\begin{cases}
		2^{j+1}, &x\in [\frac{k}{2^j},\frac{1/2+k}{2^j}),\\
		-2^{j+1}, &x\in [\frac{1/2+k}{2^j},\frac{1+k}{2^j}),\\
		0, &\text{else},
	\end{cases}
	\ee
	where $k=0,\ldots,2^j-1$ and  $j=1,\ldots,n-1$ with $n\in \N$ and $n\ge 2$.
	Now, we define the derivative-orthogonal
	wavelet space $V_H:=\mbox{span}(\mathcal{B}_H)$ with the uniform  coarse mesh size $H$ as follows:
	\be\label{Bh} \mathcal{B}_H:=\{\phi(2x-1)\}\cup\{\psi(2^jx-k):\ k=0,\ldots,2^j-1,\  j=1,\ldots,n-1\},
	\ee
	where $H=1/2^n$ for an integer $n\ge 2$.
	See \cref{regular:base:1,regular:base:2,regular:base:4,regular:base:5} for illustrations of $\{\phi(2x-1)\}$, $\{\psi(2^jx-k):\ k=0,\dots,2^j-1\}$, $\{(\phi(2x-1))'\}$, $\{(\psi(2^jx-k))':\ k=0,\dots,2^j-1\}$ with $j=1,2$.
	Clearly,
	\be\label{VH:0}
	V_H:=\mbox{span}(\mathcal{B}_H)   \subset H_0^1(\Omega).
	\ee
	Let $\supp(f)$ denote the support of the function $f$. By
 \eqref{phipsi:shift:prime}, the elements in the basis $\mathcal{B}_H$ satisfy
	\be\label{regular:support:1}
	\begin{split}
		& \supp((\psi(2^jx-k_0))') \cap \supp((\phi(2x-1))')=\supp((\psi(2^jx-k_0))') \quad \text{or} \quad \emptyset,\\
		& \supp((\psi(2^jx-k_1))')\cap \supp((\psi(2^jx-k_2))')=\emptyset, \quad \text{if } k_1\ne k_2,
	\end{split}
	\ee	
	for any  $k_0,k_1,k_2=0,\ldots,2^j-1$ and $j=1,\ldots,n-1$.
	Furthermore, for any $1\le j_{1}\le j_{2}$ with $j_{1},j_{2}\in \N$,
	\be\label{regular:support:2}
	\supp((\psi(2^{j_{1}}x-k_{1}))') \cap \supp((\psi(2^{j_{2}}x-k_{2}))')= \supp((\psi(2^{j_{2}}x-k_{2}))') \quad \text{or} \quad \emptyset,
	\ee
	where $k_{1}=0,\ldots,2^{j_{1}}-1$ and $k_{2}=0,\ldots,2^{j_{2}}-1$.

	\begin{figure}[htbp]
		\centering	
					\centering	
	\begin{subfigure}[b]{0.33\textwidth}
		 \includegraphics[width=6cm,height=4cm]{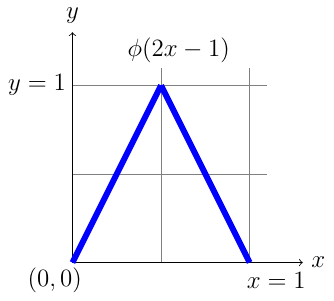}
	\end{subfigure}
	\begin{subfigure}[b]{0.32\textwidth}
		 \includegraphics[width=6cm,height=4cm]{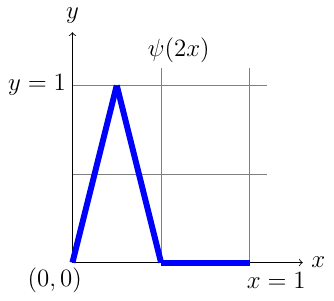}
	\end{subfigure}
	\begin{subfigure}[b]{0.32\textwidth}
		 \includegraphics[width=6cm,height=4cm]{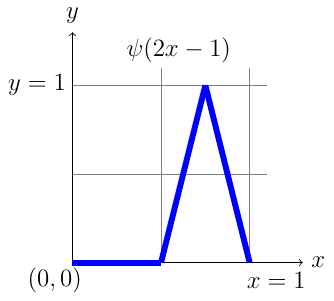}
	\end{subfigure}			
		\caption{Left panel: $\phi(2x-1)$. Middle and right panels: $\{\psi(2^jx-k):\ k=0,\ldots,2^j-1\}$ with $j=1$.}
		\label{regular:base:1}
	\end{figure}
	\begin{figure}[htbp]
		\centering	
				\centering	
		\begin{subfigure}[b]{0.24\textwidth}
			 \includegraphics[width=4.5cm,height=3.5cm]{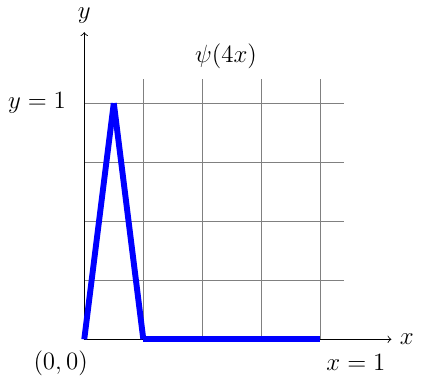}
		\end{subfigure}
		\begin{subfigure}[b]{0.24\textwidth}
			 \includegraphics[width=4.5cm,height=3.5cm]{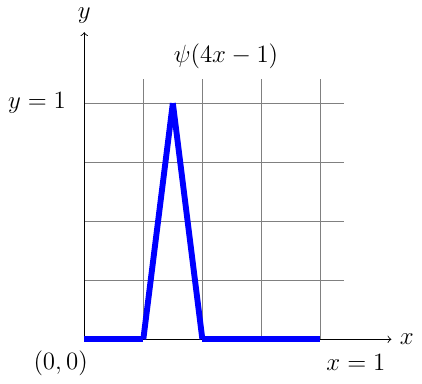}
		\end{subfigure}
		\begin{subfigure}[b]{0.24\textwidth}
			 \includegraphics[width=4.5cm,height=3.5cm]{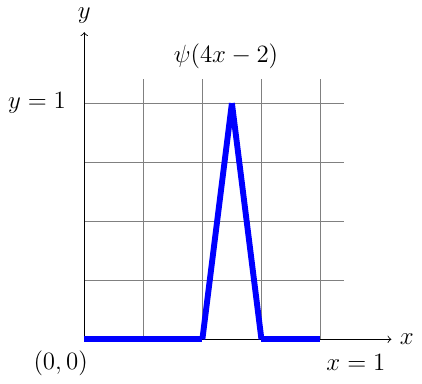}
		\end{subfigure}
		\begin{subfigure}[b]{0.24\textwidth}
			 \includegraphics[width=4.5cm,height=3.5cm]{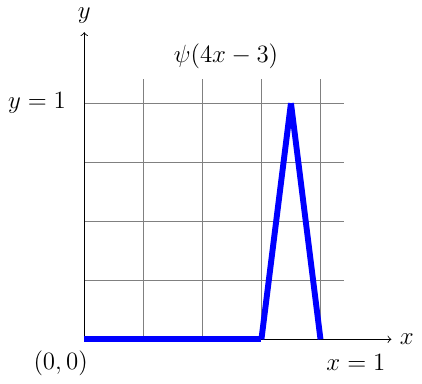}
		\end{subfigure}		
		\caption{$\{\psi(2^jx-k):\ k=0,\ldots,2^j-1\}$ with $j=2$.}
		\label{regular:base:2}
	\end{figure}
	\begin{figure}
				\centering	
		\begin{subfigure}[b]{0.32\textwidth}
			 \includegraphics[width=6cm,height=3.8cm]{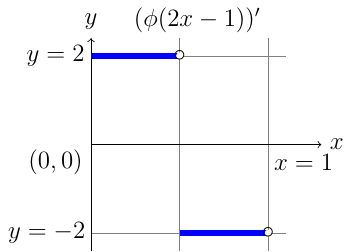}
		\end{subfigure}
		\begin{subfigure}[b]{0.33\textwidth}
			 \includegraphics[width=6cm,height=3.8cm]{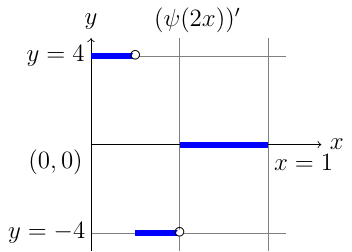}
		\end{subfigure}
		\begin{subfigure}[b]{0.33\textwidth}
			 \includegraphics[width=6cm,height=3.8cm]{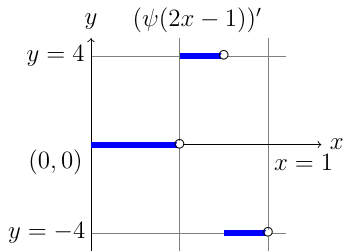}
		\end{subfigure}		
		\caption{Left panel: $(\phi(2x-1))'$. Middle and right panels: $\{(\psi(2^jx-k))':\ k=0,\ldots,2^j-1\}$ with $j=1$.}
		\label{regular:base:4}
	\end{figure}
	\begin{figure}
		\centering	
		\begin{subfigure}[b]{0.24\textwidth}
			 \includegraphics[width=4.5cm,height=3.5cm]{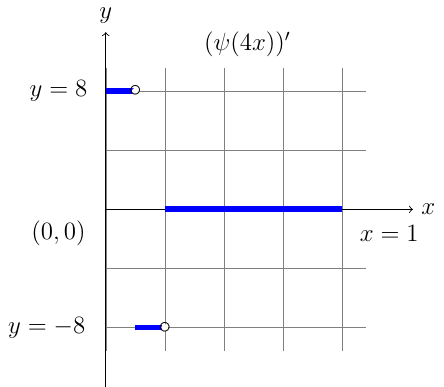}
		\end{subfigure}
		\begin{subfigure}[b]{0.24\textwidth}
			 \includegraphics[width=4.5cm,height=3.5cm]{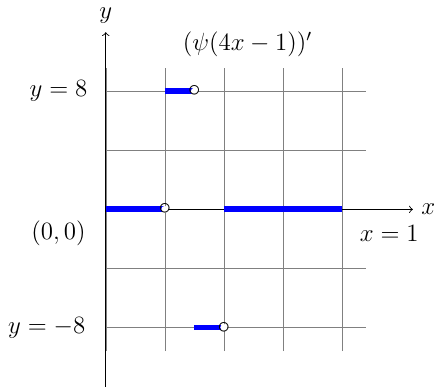}
		\end{subfigure}
		\begin{subfigure}[b]{0.24\textwidth}
			 \includegraphics[width=4.5cm,height=3.5cm]{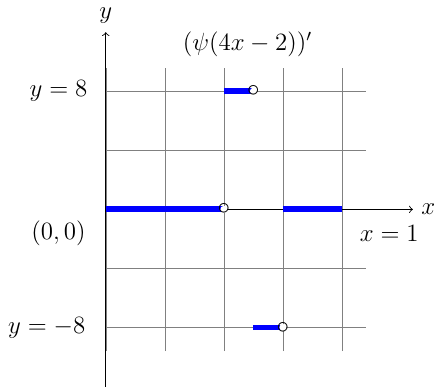}
		\end{subfigure}
		\begin{subfigure}[b]{0.24\textwidth}
			 \includegraphics[width=4.5cm,height=3.5cm]{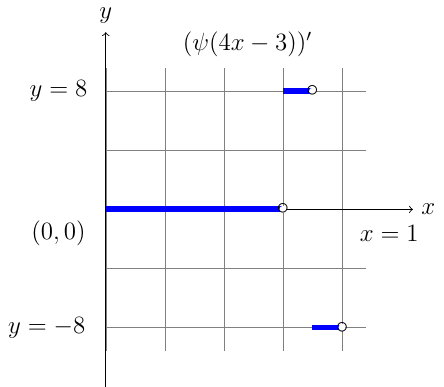}
		\end{subfigure}	
		\caption{$\{(\psi(2^jx-k))':\ k=0,\ldots,2^j-1\}$ with $j=2$.}
		\label{regular:base:5}
	\end{figure}
	\subsection{Special basis functions}\label{sec:special}			
\begin{figure}[htbp]
	\centering
	\begin{subfigure}[b]{0.24\textwidth}
	 \includegraphics[width=4.5cm,height=3.5cm]{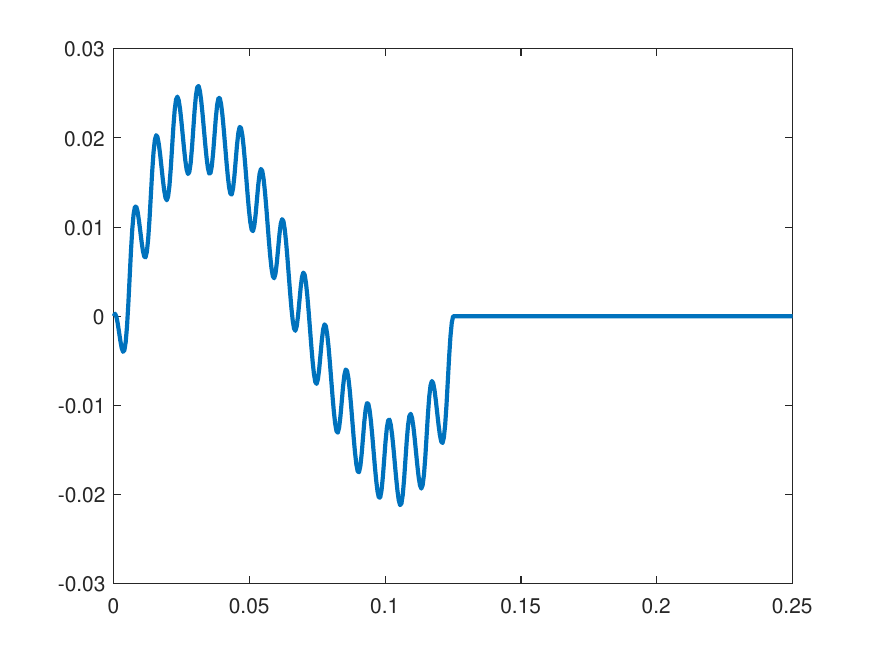}
	\end{subfigure}
	\begin{subfigure}[b]{0.24\textwidth}
	 \includegraphics[width=4.5cm,height=3.5cm]{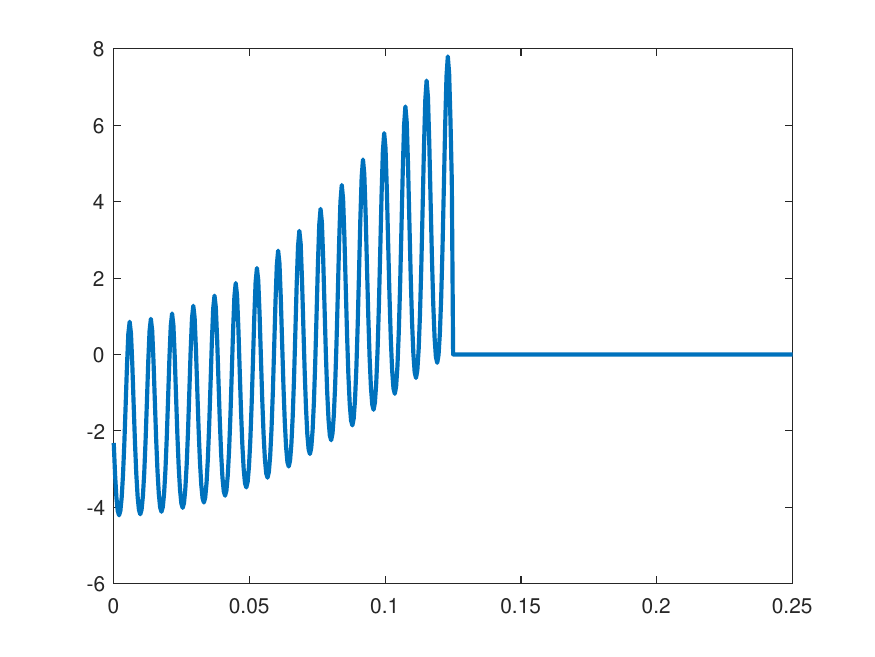}
	\end{subfigure}
	\begin{subfigure}[b]{0.24\textwidth}
	 \includegraphics[width=4.5cm,height=3.5cm]{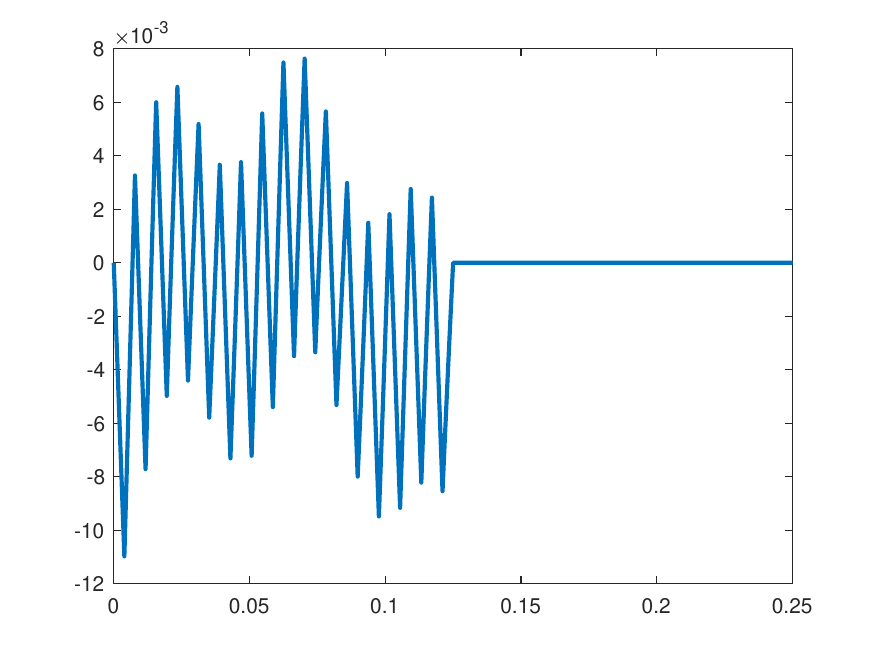}
    \end{subfigure}
    \begin{subfigure}[b]{0.24\textwidth}
	 \includegraphics[width=4.5cm,height=3.5cm]{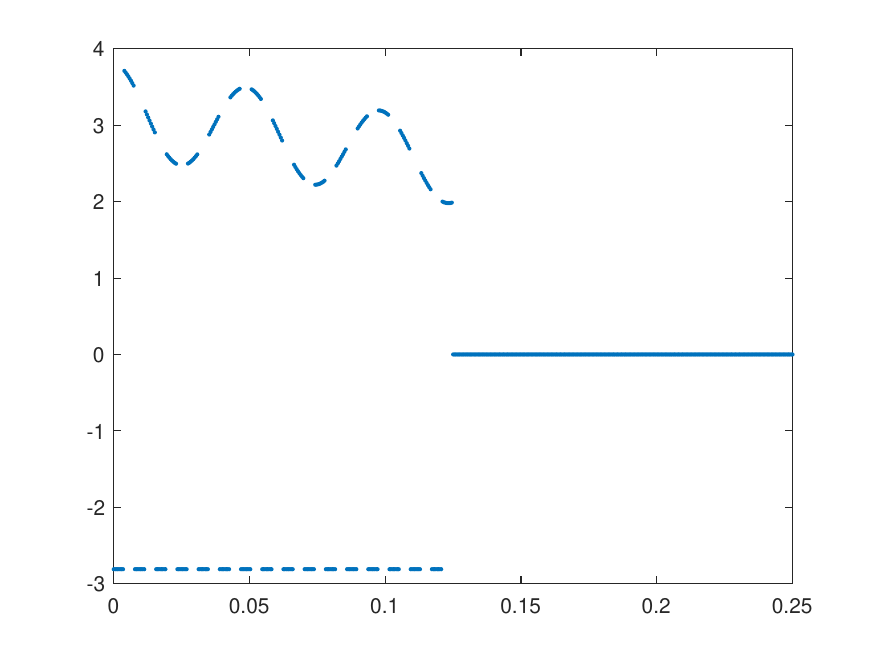}
    \end{subfigure}
	\caption{$S_0$ (first panel) and $(S_0)'$ (second panel) in \eqref{Special:Si} with $n=3$ in $[0,\frac{1}{4}]$ with a continuous coefficient function $a$.  $S_0$ (third panel) and $(S_0)'$ (fourth panel) in \eqref{Special:Si} with $n=3$ in $[0,\frac{1}{4}]$ with a discontinuous coefficient function $a$. Note that $\overline{\supp(S_0)}=\overline{\supp((S_0)')}=[0,\frac{1}{8}]$ in the above two $S_0$.}
	\label{fig:graph:S}
\end{figure}
To capture the high-contrast jump and the high-frequency oscillation  of the coefficient on the coarse mesh size $H$,	 in this subsection, besides regular basis functions, we also construct special basis functions  (see \cref{fig:graph:S} for an illustration) by solving the
	following equations
	\be\label{Spec1}
	\left\{ \begin{array}{lll}
		 (S_i)''&=&\left(\frac{1}{a}\right)',  \qquad x\in (\frac{i}{2^n},\frac{i+1}{2^n}),\\
		\ \ S_i&=&0,  \qquad \hspace{0.6cm} x\in  [0,\frac{i}{2^n}]\cup [\frac{i+1}{2^n},1],
	\end{array}
	\right.
	\ee
	where $i=0,\dots,2^n-1$  for an integer $n\ge 2$.
	For the sake of brevity, we construct special basis functions by using the following equivalent formula of \eqref{Spec1}:
	\be\label{Special:Si}	
	S_i=\begin{cases}
	 \displaystyle{\int_{\frac{i}{2^n}}^{x}
		}\left(\frac{1}{a(t)}-w_i\right)\ dt, &\quad  x\in (\frac{i}{2^n},\frac{i+1}{2^n}),\\
		0, & \quad x\in  [0,\frac{i}{2^n}]\cup [\frac{i+1}{2^n},1],
	\end{cases}	
	\ee
	where  $i=0,\dots,2^n-1$ for an integer $n\ge 2$ and
	\be \label{wi} w_i=\frac{\int_{\frac{i}{2^n}}^{\frac{i+1}{2^n}}\frac{1}{a(t)}dt}{\frac{i+1}{2^n}-\frac{i}{2^n}}=2^n\int_{\frac{i}{2^n}}^{\frac{i+1}{2^n}}\frac{1}{a(t)}dt.
	\ee
	Then \eqref{Special:Si}	 and \eqref{wi} yield
	\be\label{Si:xi1}
	S_{i}\left(\tfrac{i}{2^n}\right)=0,
	\ee
	\be\label{Si:xi2}
	\begin{split}
	S_i\left(\frac{i+1}{2^n}\right)= \int_{\frac{i}{2^n}}^{\frac{i+1}{2^n}}
		\left(\frac{1}{a(t)}-w_i\right) dt= \int_{\frac{i}{2^n}}^{\frac{i+1}{2^n}}
	\frac{1}{a(t)}dt-\frac{w_i}{2^n}=0,
	\end{split}
	\ee
and
	\be\label{Si:prime}
	(S_i)'=\begin{cases}
		\displaystyle{\tfrac{1}{a}}-w_i, &\quad  x\in[\frac{i}{2^n},\frac{i+1}{2^n}),\\
		0, &\quad \text{else},
	\end{cases}	
	\ee
    where  $i=0,\dots,2^n-1$ for an integer $n\ge 2$.
	So
	\be\label{supp:Si}
	\supp((S_i)')\cap \supp((S_j)')=\emptyset, \qquad \quad \text{if } i\ne j.
	\ee
	According to $a>0$, if $a$ is not a constant function on $(\frac{i}{2^n},\frac{i+1}{2^n})$,
we observe
	\[
 \|(S_i)'\|^2_{L_2(\Omega)}=\int_{\frac{i}{2^n}}^{\frac{i+1}{2^n}} \left(\frac{1}{a}-w_i\right)^2 dx>0.
	\]
Note that we throw away $S_i$ if $a$ is a constant on $(\frac{i}{2^n},\frac{i+1}{2^n})$ because $S_i=0$. Clearly,
	  \[
	  \|(\phi(2x-1))'\|^2_{L_2(\Omega)}=4
\quad \mbox{and}\quad
 \|(\psi(2^jx-k))'\|^2_{L_2(\Omega)}=\frac{2^{2j+3}}{2^{j+1}}=2^{j+2}.
	  \]

	\subsection{The proposed derivative-orthogonal wavelet multiscale method}\label{sec:multiscale}			
 Now, we formulate the  derivative-orthogonal wavelet multiscale method space with defined regular  (\cref{sec:regular})  and special  (\cref{sec:special})  basis functions in the  uniform coarse mesh size $H$ as follows:
%
%
%
%
%
\be\label{Btildeh}
\begin{split}
	\tilde{\mathcal{B}}_H=&\bigg\{ \frac{\phi(2x-1)}{\|(\phi(2x-1))'\|_{L_2(\Omega)}}\bigg\}\cup\bigg\{\frac{\psi(2^jx-k)}{
\|(\psi(2^jx-k))'\|_{L_2(\Omega)}} \; :\; k=0,\ldots,2^j-1,\  j=1,\ldots,n-1 \bigg\}\\ &\cup\bigg\{\frac{S_i}{\|(S_i)'\|_{L_2(\Omega)}}:\ i=0,\ldots,2^n-1\bigg\},
\end{split}
\ee
where $H=1/2^n$ for an integer $n\ge 2$.
Note that $\|g'\|_{L_2(\Omega)}=1$ for every $g\in \tilde{\mathcal{B}}_H$ and
\be\label{Btildeh:H01}
\begin{split}
	 \tilde{V}_H:=\mbox{span}(\tilde{\mathcal{B}}_H) \subset H_0^1(\Omega).
\end{split}
\ee	
By \eqref{phipsi:shift} and \eqref{Si:prime},
\be	\label{supp:Si:2}
\begin{split}
	&\left[0,\tfrac{1}{2}\right]  \cap \supp((S_i)')= \supp((S_i)') \quad \text{or} \quad \emptyset,\\
	&\left[\tfrac{1}{2},1\right]  \cap \supp((S_i)')= \supp((S_i)') \quad \text{or} \quad \emptyset,
\end{split}
\ee
where   $i=0,\dots,2^n-1$ and $n\ge 2$.	   \eqref{phipsi:shift:prime} and \eqref{Si:prime} imply
\be	\label{supp:Si:3}
\begin{split} &\left[\tfrac{k}{2^j},\tfrac{k+1/2}{2^j}\right] \cap \supp((S_i)')= \supp((S_i)') \quad \text{or} \quad \emptyset,\\
	 &\left[\tfrac{k+1/2}{2^j},\tfrac{k+1}{2^j}\right] \cap \supp((S_i)')= \supp((S_i)') \quad \text{or} \quad \emptyset,
\end{split}
\ee
where $k=0,\dots,2^j-1$, $j=1,\dots,n-1$, $i=0,\dots,2^n-1$ and $n\ge 2$.	
For the sake of brevity, we rewrite $\tilde{\mathcal{B}}_H$  in \eqref{Btildeh} as
\be\label{Btildeh:2}
\tilde{V}_H=\mbox{span}(\tilde{\mathcal{B}}_N)
\quad \mbox{and}\quad\
\tilde{\mathcal{B}}_N= \big\{ \varphi_1,  \varphi_2,  \dots,  \varphi_M \big\},
\ee
where 		
\be\label{number:M}
M=1+\left(\sum_{j=1}^{n-1}\sum_{k=0}^{2^j-1}1\right)+2^n=1+\left(\sum_{j=1}^{n-1}2^j\right)+2^n=1+\frac{2-2^n}{1-2}+2^n
=2^{n+1}-1.
\ee
	Now, the numerical solution of our proposed derivative-orthogonal wavelet multiscale method is computed by the following procedure: Find $\tilde{u}_H\in  \tilde{V}_H:=\mbox{span}(\tilde{\mathcal{B}}_H)$ such that
	\be\label{M:FEM}
	\int_\Omega a \nabla \tilde{u}_H \cdot \nabla \tilde{v}_H=\int_\Omega f \tilde{v}_H,\ \ \ \ \mbox{for all}\ \tilde{v}_H\in \tilde{\mathcal{B}}_H,
	\ee
	where $\tilde{\mathcal{B}}_H$ is defined in \eqref{Btildeh} or equivalently \eqref{Btildeh:2} and is the basis of $\tilde{V}_H$ with the derivative-orthogonality property (\cite{HanMichelle2019}). See \cite{Hanbook,HanMichelle2019} and many references therein for wavelet-based applications for numerical PDEs.
Note that $\mathcal{B}_{H} \subset \tilde{\mathcal{B}}_{H}$, while $\mathcal{B}_{H/2}$ and $\tilde{\mathcal{B}}_H$ have the same cardinality $2^{n+1}-1$.
    %
	%
	%

%
%

%
%
	%
	%
	\subsection{Bounded condition numbers}\label{sec:condition:number}				 
	In this subsection, we first state the following \cref{lem:ortho} which is used to prove that the condition number of our proposed derivative-orthogonal wavelet multiscale method is bounded by $a_{\max}/a_{\min}$.
	\begin{lemma}\label{lem:ortho}
		For any $\varphi_i$ and $\varphi_j\in \tilde{\mathcal{B}}_H$,
		\be\label{orthorgonal:is:0}
		\int_{\Omega} (\varphi_i)' (\varphi_j)'\ dx=\delta_{ij},
		\ee
	where $\tilde{\mathcal{B}}_H$ is defined in \eqref{Btildeh} and \eqref{Btildeh:2}.		 Hence, $\tilde{\mathcal{B}}_H$ forms a derivative-orthogonal basis of $\tilde{V}_H$.
	\end{lemma}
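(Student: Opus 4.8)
The plan is to verify \eqref{orthorgonal:is:0} by checking it case-by-case according to the three types of elements in $\tilde{\mathcal{B}}_H$: the normalized coarse hat function $\phi(2x-1)/\|(\phi(2x-1))'\|_{L_2(\Omega)}$, the normalized wavelets $\psi(2^jx-k)/\|(\psi(2^jx-k))'\|_{L_2(\Omega)}$, and the normalized special functions $S_i/\|(S_i)'\|_{L_2(\Omega)}$. Since every $g\in\tilde{\mathcal{B}}_H$ satisfies $\|g'\|_{L_2(\Omega)}=1$ by construction, the diagonal case $i=j$ in \eqref{orthorgonal:is:0} is immediate. So the whole content is the vanishing of $\int_\Omega (\varphi_i)'(\varphi_j)'$ for $i\neq j$, and the strategy is to show that in each mixed or same-type pairing the product of derivatives integrates to zero, either because the supports are disjoint or because on a common support one factor is a constant while the other integrates to zero there.

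First I would dispose of the purely ``regular'' pairings: two distinct wavelets at the same level $j$ have disjoint derivative supports by the second line of \eqref{regular:support:1}; a wavelet at level $j_1$ against one at level $j_2>j_1$ has, by \eqref{regular:support:2}, either disjoint supports (done) or the finer wavelet's support entirely inside a dyadic half-interval on which $(\psi(2^{j_1}x-k_1))'$ is the constant $\pm 2^{j_1+1}$; then $\int (\psi(2^{j_2}x-k_2))'\,dx$ over that half-interval is zero because $\psi(2^{j_2}\cdot-k_2)$ vanishes at both endpoints of its support (equivalently it is a telescoping integral of a function that goes $0\to 1\to 0$). The hat-versus-wavelet pairing is the $j_1=0$ instance of the same argument using \eqref{phipsi:shift} and \eqref{regular:support:1}. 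This reproves the known derivative-orthogonality of the wavelet family from \cite{HanMichelle2019}.

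Next the special functions: for $i\neq j$, \eqref{supp:Si} gives $\supp((S_i)')\cap\supp((S_j)')=\emptyset$, so $S_i$-versus-$S_j$ is trivial. For $S_i$ against a regular basis function, the key point is that $(S_i)'$ is supported in the single fine cell $[\tfrac{i}{2^n},\tfrac{i+1}{2^n}]$, and by \eqref{supp:Si:2}--\eqref{supp:Si:3} this cell lies entirely within each relevant dyadic half-interval $[\tfrac{k}{2^j},\tfrac{k+1/2}{2^j}]$ or $[\tfrac{k+1/2}{2^j},\tfrac{k+1}{2^j}]$ (for $j=0,\dots,n-1$), on which the corresponding regular derivative is a constant $c$. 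Hence
\[
\int_\Omega (S_i)'\,(\varphi_{\text{reg}})'\,dx = c\int_{\frac{i}{2^n}}^{\frac{i+1}{2^n}} (S_i)'\,dx = c\Big(S_i\big(\tfrac{i+1}{2^n}\big)-S_i\big(\tfrac{i}{2^n}\big)\Big)=0
\]
by \eqref{Si:xi1} and \eqref{Si:xi2} (this is exactly why $w_i$ was chosen as the average of $1/a$ over the cell). Dividing through by the product of the two $L_2$-norms of the derivatives — all equal to $1$ — turns these computations into the claimed statement \eqref{orthorgonal:is:0}, and since $\tilde{\mathcal{B}}_H$ is then an orthonormal set of derivatives it is in particular linearly independent, hence a basis of $\tilde{V}_H$.

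The main obstacle is purely bookkeeping rather than conceptual: one must be careful that, in the mixed wavelet--wavelet and special--regular cases, the finer object's support really is contained in a \emph{single} half-interval on which the coarser derivative is constant (and does not straddle the jump of that derivative), so that pulling the constant out of the integral is legitimate; this containment is precisely what \eqref{regular:support:2}, \eqref{supp:Si:2} and \eqref{supp:Si:3} were set up to guarantee, so the argument is a matter of invoking them in the right combination for each of the finitely many type-pairs.
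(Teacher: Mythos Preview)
Your proposal is correct and follows essentially the same approach as the paper's proof: both arguments handle the diagonal case by normalization, the regular--regular pairs by the support relations \eqref{regular:support:1}--\eqref{regular:support:2}, the special--special pairs by \eqref{supp:Si}, and the special--regular pairs by observing that the regular derivative is constant on the fine cell $[\tfrac{i}{2^n},\tfrac{i+1}{2^n}]$ (via \eqref{supp:Si:2}--\eqref{supp:Si:3}) and that $\int_{i/2^n}^{(i+1)/2^n}(S_i)'\,dx=0$ by \eqref{Si:xi1}--\eqref{Si:xi2}. Your write-up is in fact slightly more detailed than the paper's on the wavelet-versus-wavelet orthogonality, where the paper simply cites \eqref{phipsi:shift}--\eqref{regular:support:2} without spelling out the ``constant times zero-mean'' mechanism.
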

	\begin{proof}
		
		By   \eqref{phipsi:shift}--\eqref{regular:support:2}, we deduce
		\[
		\int_{\Omega} (\varphi_i)' (\varphi_j)'=0, \quad \text{if } \varphi_i\ne \varphi_j, \quad \text{for any }  \varphi_i,\varphi_j\in \mathcal{B}_H,
		\]
		where $\mathcal{B}_H$ is defined in \eqref{Bh}.
		Using \eqref{phipsi:shift}, we obtain that for any $S_i$ in \eqref{Special:Si}
		\[
		\int_{\Omega} (\phi(2x-1))' (S_i)'dx
		=	2\int_{0}^{1/2}  (S_i)'dx-2\int_{1/2}^{1}  (S_i)'dx.		
		\]
		 Note that \eqref{Si:prime} implies $\supp((S_i)')\subset$ $[0,1/2]$ or $[1/2,1]$.
		Then
		\[
		\begin{split}
			&\int_{0}^{1/2}  (S_i)'dx=\int_{\frac{i}{2^n}}^{\frac{i+1}{2^n}}  (S_i)'dx \quad \text{or} \quad 0,\\
			&\int_{1/2}^{1}  (S_i)'dx=\int_{\frac{i}{2^n}}^{\frac{i+1}{2^n}}  (S_i)'dx \quad \text{or} \quad 0.
		\end{split}
		\]
		Note that \eqref{Si:xi1} and \eqref{Si:xi2} result in
		\be\label{Si:is:0}
		\begin{split}
			 \int_{\frac{i}{2^n}}^{\frac{i+1}{2^n}}  (S_i)'dx=S_i\left(\frac{i+1}{2^n}\right)-S_i\left(\frac{i}{2^n}\right)=0.
		\end{split}
		\ee
		So
		\[
		\int_{\Omega} (\phi(2x-1))' (S_i)'dx=0, \quad \text{for any }  i=0,\dots, 2^n-1.
		\]
		On the other hand, by \eqref{phipsi:shift:prime},
		\[
		 \int_{\Omega}(\psi(2^jx-k))'(S_i)'dx=2^{j+1}\int_{\frac{k}{2^j}}^{\frac{1/2+k}{2^j}} (S_i)'dx-2^{j+1}\int_{\frac{1/2+k}{2^j}}^{\frac{1+k}{2^j}} (S_i)'dx.
		\]
		 Now \eqref{supp:Si:3} yields
		\[
		 \int_{\Omega}(\psi(2^jx-k))'(S_i)'dx=\begin{cases}
			 2^{j+1}\int_{\frac{k}{2^j}}^{\frac{1/2+k}{2^j}}  (S_i)'dx=2^{j+1}\int_{\frac{i}{2^n}}^{\frac{i+1}{2^n}}  (S_i)'dx, &\quad  \supp((S_i)')\subset [\frac{k}{2^j},\frac{1/2+k}{2^j}],\\
			 -2^{j+1}\int_{\frac{1/2+k}{2^j}}^{\frac{1+k}{2^j}}  (S_i)'dx=-2^{j+1}\int_{\frac{i}{2^n}}^{\frac{i+1}{2^n}}  (S_i)'dx, & \quad  \supp((S_i)')\subset [\frac{1/2+k}{2^j},\frac{1+k}{2^j}],\\
			0, &\quad \text{else}.
		\end{cases}	
		\]
	 From \eqref{Si:is:0}, we observe
		\[
		 \int_{\Omega}(\psi(2^jx-k))'(S_i)'dx=0.
		\]
		Recall  \eqref{supp:Si} that
		$\supp((S_i)')\cap \supp((S_j)')=\emptyset$ if $i\ne j$.
		Now, we can conclude that
		\[
		\int_{\Omega} (\varphi_i)' (\varphi_j)'\ dx=0, \quad \text{if }  \varphi_i\ne \varphi_j\quad \text{for any } \varphi_i,\varphi_j\in \tilde{\mathcal{B}}_H.
		\]
	By the definition of $\tilde{\mathcal{B}}_H$ in \eqref{Btildeh}, it is straightforward to verify
		\[
		\int_{\Omega} (\varphi_i)' (\varphi_i)'\ dx=1, \quad \text{for any } \varphi_i\in \tilde{\mathcal{B}}_H.
		\]
		Thus, \eqref{orthorgonal:is:0} is proved.
	\end{proof}
	Applying \eqref{orthorgonal:is:0} in	 \cref{lem:ortho},  the following theorem  presents that the  condition number $\kappa$ of our proposed derivative-orthogonal wavelet multiscale method satisfies $\kappa \le a_{\max}/a_{\min}$.
	\begin{theorem}\label{TTY2}
    Define $a_{\max}:=\sup_{x\in \Omega} a(x)$ and $a_{\min}:=\inf_{x\in \Omega} a(x)$. Suppose that $0<a_{\min}\le a(x)\le a_{\max}<\infty$ for all $x\in \Omega$.
		Let $A$ be the corresponding stiffness matrix of \eqref{M:FEM} and $\kappa$ be its condition number by using  the basis $\tilde{\mathcal{B}}_H$ defined in \eqref{Btildeh} of $\tilde{V}_H$.
		Then
		$A$ is  an $M\times M$ symmetric positive definite matrix   and
		\be
		\kappa\le \frac{ a_{\max} }{a_{\min}},
		\ee
		where $M:=2^{n+1}-1$ and $n$ is an integer such that the uniform mesh size $H=1/2^n$ and  $n\ge 2$.
	\end{theorem}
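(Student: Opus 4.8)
The plan is to exploit the derivative-orthogonality established in \cref{lem:ortho} to diagonalize the bilinear form up to the multiplicative factor $a$. Write $A=(A_{ij})$ with $A_{ij}=\int_\Omega a\,(\varphi_i)'(\varphi_j)'\,dx$ for $\varphi_i,\varphi_j\in\tilde{\mathcal B}_H$. Symmetry of $A$ is immediate since the integrand is symmetric in $i,j$. For any vector $c=(c_1,\dots,c_M)^\top\in\R^M$, set $g:=\sum_{i=1}^M c_i\varphi_i\in\tilde V_H$, so that $c^\top A c=\int_\Omega a\,(g')^2\,dx$. The key identity is that, by \eqref{orthorgonal:is:0}, $\|g'\|_{L_2(\Omega)}^2=\int_\Omega (g')^2\,dx=\sum_{i,j}c_ic_j\int_\Omega(\varphi_i)'(\varphi_j)'\,dx=\sum_{i=1}^M c_i^2=\|c\|_2^2$. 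This is the heart of the argument: the Gram matrix of the derivatives is the identity.

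Next I would sandwich $c^\top A c$ using assumption (A1). Since $a_{\min}\le a(x)\le a_{\max}$ pointwise and $(g')^2\ge 0$,
\be
a_{\min}\,\|c\|_2^2=a_{\min}\int_\Omega (g')^2\,dx\le \int_\Omega a\,(g')^2\,dx=c^\top A c\le a_{\max}\int_\Omega (g')^2\,dx=a_{\max}\,\|c\|_2^2.
\ee
The lower bound shows $c^\top A c\ge a_{\min}\|c\|_2^2>0$ whenever $c\ne 0$ (using $a_{\min}>0$ and that the $\varphi_i$ are linearly independent, so $g'\equiv 0$ forces $c=0$, which also follows from the identity $\|c\|_2^2=\|g'\|_{L_2(\Omega)}^2$); hence $A$ is positive definite, and together with symmetry, $A$ is an $M\times M$ symmetric positive definite matrix with $M=2^{n+1}-1$ by \eqref{number:M}. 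The Rayleigh quotient characterization of eigenvalues then gives $\lambda_{\min}(A)=\min_{c\ne 0}\frac{c^\top A c}{\|c\|_2^2}\ge a_{\min}$ and $\lambda_{\max}(A)=\max_{c\ne 0}\frac{c^\top A c}{\|c\|_2^2}\le a_{\max}$.

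Finally, the condition number in the spectral norm satisfies $\kappa=\lambda_{\max}(A)/\lambda_{\min}(A)\le a_{\max}/a_{\min}$, which is the claimed bound and is manifestly independent of $H$. I do not anticipate a serious obstacle here: the only subtlety is making sure the map $c\mapsto g$ is used consistently and that linear independence of $\tilde{\mathcal B}_H$ (equivalently, that $\tilde{\mathcal B}_H$ is genuinely a basis, not merely a spanning set) is invoked — but this is exactly what \cref{lem:ortho} delivers, since an orthonormal-in-derivative family is automatically linearly independent. The entire proof is therefore a short Rayleigh-quotient estimate built on \eqref{orthorgonal:is:0}.
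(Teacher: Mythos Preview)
Your proposal is correct and follows essentially the same approach as the paper: both arguments compute the quadratic form $c^\top A c=\int_\Omega a\,(\sum_i c_i\varphi_i')^2\,dx$, invoke the derivative-orthogonality of \cref{lem:ortho} to identify $\int_\Omega(\sum_i c_i\varphi_i')^2\,dx=\sum_i c_i^2$, sandwich with $a_{\min}\le a\le a_{\max}$, and read off the eigenvalue bounds $\lambda_{\min}\ge a_{\min}$, $\lambda_{\max}\le a_{\max}$ via the Rayleigh quotient.
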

	\begin{proof}
Note that $\tilde{\mathcal{B}}_H=\{\varphi_1,\ldots,\varphi_M\}$ in \eqref{Btildeh:2}.
		By \eqref{Btildeh}, 	 \eqref{Btildeh:H01}, and  \eqref{M:FEM}, we have
		\[
A_{ij}=	\int_{\Omega} a(x)(\varphi_i)' (\varphi_j)'\ dx,\qquad i,j=1,\ldots,M.
\]
		  Let $c=(c_1,\ldots,c_{M})^T\in \R^M$ be a column vector with each $c_i\in \R$. Then
		\[
c^T A c=	\int_{\Omega} a(x)\Big(\sum_{i=1}^{M} c_i (\varphi_i)'\Big) \Big(\sum_{j=1}^{M} c_j (\varphi_j)'\Big) dx=	 \int_{\Omega} a(x) \Big(\sum_{i=1}^{M} c_i (\varphi_i)'\Big)^2 dx.\]
		By $0<a_{\min}\le a(x)\le a_{\max}<\infty$ for all $x\in \Omega$ and $(\sum_{i=1}^{M} c_i (\varphi_i)')^2\ge 0$, we conclude that
		\[
a_{\min}
\int_{\Omega} \Big(\sum_{i=1}^{M} c_i (\varphi_i)'\Big)^2 dx \le c^T A c\le
a_{\max}
	\int_{\Omega} \Big(\sum_{i=1}^{M} c_i (\varphi_i)'\Big)^2  dx.
\]
As each $c_i\in \R$ and
note that $\|(\varphi_i)'\|^2_{L_2(\Omega)}=1$ for any $\varphi_i\in \tilde{\mathcal{B}}_H$ in \eqref{Btildeh}, we conclude from \eqref{orthorgonal:is:0} in \cref{lem:ortho} that
\[
\int_{\Omega} \Big(\sum_{i=1}^{M} c_i (\varphi_i)'\Big)^2 dx
=\int_{\Omega} \sum_{i=1}^{M} \Big(c_i(\varphi_i)'\Big)^2  dx
=\sum_{i=1}^{M} (c_i)^2.
\]
%
		%
		%
Consequently, we proved
		\[
a_{\min}
\sum_{i=1}^{M} (c_i)^2 \le c^T A c\le
a_{\max}
\sum_{i=1}^{M} (c_i)^2, \qquad \forall\; c=(c_1,\ldots, c_M)^T\in \R^M.
\]
		%
		%
		%
		Now, we can see that the stiffness matrix $A$ is a symmetric positive definite matrix, which implies the unique expression of $\tilde{u}_H\in  \text{span} (\tilde{\mathcal{B}}_H )$ by solving \eqref{M:FEM}.
		%
		%
		%
		%
		%
Now we conclude from the inequalities that
the maximum and minimum eigenvalues of the matrix $A$ satisfy that
		$\lambda_{\max}\le a_{\max}$ and $\lambda_{\min}\ge a_{\min}$.
		Thus
		$\kappa=\lambda_{\max}/ \lambda_{\min}\le a_{\max}/a_{\min}$.
	\end{proof}

	\subsection{Error estimates}\label{sec:error:estimates}			 
	In this subsection, we theoretically prove that the energy and $L_2$ norms of the errors  of our proposed method achieve first-order and second-order
	convergence rates in \cref{Thm:u:prime:L2} and \cref{Thm:u:L2}, respectively  for any coarse mesh size $H$.
To do so, we need the following auxiliary results in \cref{lemma:piecewise} and \cref{lemma:space}.

	\begin{lemma}\label{lemma:piecewise}
Let $p$ be any piecewise constant function on $\Omega$ given by
\[
p(x)=p_i\in \R, \mbox{ for all }  x\in[\tfrac{i}{2^n},\tfrac{i+1}{2^n})
\mbox{ and } i=0,\dots,2^n-1,
\]
where $n$ is a positive integer with $n\ge 2$. Then the function $p(x)$ can be uniquely expressed as
		\be\label{px} p(x)=c_0+c_{1}(\phi(2x-1))'+\sum_{j=1}^{n-1}\sum_{k=0}^{2^j-1}c_{j,k}(\psi(2^jx-k))',
		\ee
		where $(\phi(2x-1))'$ and $(\psi(2^jx-k))'$ are defined in \eqref{phipsi:shift} and \eqref{phipsi:shift:prime} respectively, and
		\[
		\begin{split}
		c_0=\frac{p_i}{2^n},\qquad c_1=	 \frac{1}{4}\int_{0}^1	 p(x)(\phi(2x-1))'dx,\qquad
		c_{j,k}=	 \frac{1}{2^{j+2}}\int_{0}^1	 p(x)(\psi(2^jx-k))'dx.
		\end{split}
		\]
	\end{lemma}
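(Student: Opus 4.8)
The plan is to recognize the $2^n$ functions appearing on the right-hand side of \eqref{px} as an orthogonal basis of the $2^n$-dimensional space $P_n$ consisting of all functions on $\Omega$ that are constant on each $[\tfrac{i}{2^n},\tfrac{i+1}{2^n})$, $i=0,\dots,2^n-1$, and then to read off the coefficients from the orthogonality relations. First I would check that
\[
W_n:=\mbox{span}\Big(\{1\}\cup\{(\phi(2x-1))'\}\cup\{(\psi(2^jx-k))':k=0,\dots,2^j-1,\ j=1,\dots,n-1\}\Big)\subseteq P_n ,
\]
which is immediate from the explicit piecewise-constant formulas \eqref{phipsi:shift} and \eqref{phipsi:shift:prime}: the only discontinuities of these functions occur at the points $\tfrac{k}{2^j},\tfrac{k+1/2}{2^j},\tfrac{k+1}{2^j}$ with $j\le n-1$, all of which are integer multiples of $2^{-n}$. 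A direct count gives $1+1+\sum_{j=1}^{n-1}2^j=2^n=\dim P_n$ functions in $W_n$.

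Next I would verify that these $2^n$ functions form a pairwise $L_2(\Omega)$-orthogonal family of nonzero functions; such a family is automatically linearly independent, so together with the dimension count it is an orthogonal basis of $P_n$, and since $p\in P_n$ this already yields the existence and uniqueness of the representation \eqref{px}. The mutual orthogonality of $(\phi(2x-1))'$ and the $(\psi(2^jx-k))'$ among themselves is precisely the content of the proof of \cref{lem:ortho}: the case analysis carried out there, based on the support relations \eqref{regular:support:1}--\eqref{regular:support:2}, uses nothing special about the $S_i$. Orthogonality of the constant function $1$ against each of the other basis elements amounts to $\int_\Omega(\phi(2x-1))'\,dx=0$ and $\int_\Omega(\psi(2^jx-k))'\,dx=0$, which follow at once from \eqref{phipsi:shift}--\eqref{phipsi:shift:prime}. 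Finally, none of the functions is zero, since $\|1\|_{L_2(\Omega)}^2=1$, $\|(\phi(2x-1))'\|_{L_2(\Omega)}^2=4$, and $\|(\psi(2^jx-k))'\|_{L_2(\Omega)}^2=2^{j+2}$.

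It then remains to recover the coefficients, which I would do by pairing \eqref{px} with each basis function in $L_2(\Omega)$; orthogonality collapses the sum to a single term and yields
\[
c_0=\frac{\int_\Omega p\,dx}{\|1\|_{L_2(\Omega)}^2}=\int_\Omega p(x)\,dx=\sum_{i=0}^{2^n-1}\frac{p_i}{2^n},\qquad c_1=\frac{1}{4}\int_0^1 p(x)(\phi(2x-1))'\,dx,\qquad c_{j,k}=\frac{1}{2^{j+2}}\int_0^1 p(x)(\psi(2^jx-k))'\,dx,
\]
the stated formulas.

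I do not expect a genuine obstacle here. The substantive fact — that the derivatives of the regular wavelet basis, together with the constant function, form an $L_2(\Omega)$-orthogonal system, equivalently (up to the explicit rescalings $2$ and $2^{j/2+1}$) the Haar basis of $P_n$ — is already established in \cref{lem:ortho}; everything else is the dimension count and bookkeeping of normalizing constants. The one point that needs a little care is the cross-scale orthogonality $\int_\Omega(\psi(2^{j_1}x-k_1))'(\psi(2^{j_2}x-k_2))'\,dx=0$ for $(j_1,k_1)\neq(j_2,k_2)$: when $j_1=j_2$ the supports are disjoint by \eqref{regular:support:1}, and when $j_1<j_2$ one invokes \eqref{regular:support:2} to see that $(\psi(2^{j_1}x-k_1))'$ is constant on $\supp((\psi(2^{j_2}x-k_2))')$, so the integral equals that constant times $\int_\Omega(\psi(2^{j_2}x-k_2))'\,dx=0$ — exactly the computation already performed in the proof of \cref{lem:ortho}.
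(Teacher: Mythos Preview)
Your proof is correct and rests on the same core fact as the paper's, namely the $L_2(\Omega)$-orthogonality of the (rescaled Haar) system $\{1,(\phi(2x-1))',(\psi(2^jx-k))'\}$. The paper's one-line argument invokes that the full Haar system is an orthogonal basis of $L_2(\Omega)$ and then notes that $\int_\Omega p(x)(\psi(2^jx-k))'\,dx=0$ for all $j\ge n$, so the expansion of $p$ truncates; you instead work entirely inside the finite-dimensional space $P_n$, pairing orthogonality with a dimension count. This is a minor packaging difference rather than a genuinely different route: your version is slightly more self-contained because it avoids appealing to completeness of the Haar system in $L_2(\Omega)$, while the paper's version is shorter because it takes that completeness as known. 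Your coefficient computation $c_0=\sum_{i=0}^{2^n-1}p_i/2^n=\int_\Omega p$ is the correct reading of the paper's (evidently abbreviated) formula $c_0=p_i/2^n$.
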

	
\begin{proof}
The claim follows immediately from the facts that $\left \{ (\psi(2^jx-k))' : j\in \N, 0\le k \le 2^j-1 \right \} \cup \left \{(\phi(2x-1))'\right \}$ $\cup\{\chi_{[0,1]} \}$ form an orthogonal basis in $L_2(\Omega)$ and $\int_{\Omega} p(x) (\psi(2^jx-k))' dx=0$ for all $j \ge n$ and $0\le k \le 2^j-1$.
\end{proof}

	In order to theoretically prove the convergence rate of the error in the energy norm of our proposed  derivative-orthogonal wavelet multiscale method in \cref{Thm:u:prime:L2}, we  define
	%
	%
	%

%
\begin{equation}\label{Bh:prime}
\mathcal{B}'_H:=\{\chi_{\Omega}\}\cup \{g' \; : \; g\in \mathcal{B}_H\}
\quad \mbox{and}\quad
\tilde{\mathcal{B}}'_H:=\{\chi_{\Omega}\}\cup \{g' \; : \; g\in \tilde{\mathcal{B}}_H\},
\end{equation}
where $\mathcal{B}_H$ and $\tilde{\mathcal{B}}_H$ are defined in
\eqref{Bh} and \eqref{Btildeh}, respectively, and $H=1/2^n$ for an integer $n\ge 2$.

	%
	%
	%
	%
	%
	%
	%
	\begin{lemma}\label{lemma:space}	 For any  $f_1,f_2\in  \text{span}(\mathcal{B}'_H)$  and $f_3\in  \text{span}(\tilde{\mathcal{B}}'_H)$,
		we have
		\be\label{f1f2f3}
		f_1f_2\in  \text{span}(\mathcal{B}'_H),\quad  f_1f_3\in \text{span}( \tilde{\mathcal{B}}'_H), \quad \text{and} \quad  \text{span}(\mathcal{B}'_H) \subset  \text{span}(\tilde{\mathcal{B}}'_H).
		\ee
	\end{lemma}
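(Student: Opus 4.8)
The plan is to obtain explicit descriptions of the two linear spans in terms of their behavior on the dyadic cells $I_i:=[\tfrac{i}{2^n},\tfrac{i+1}{2^n})$, $i=0,\dots,2^n-1$, and then read off the three assertions; no estimate is involved. First I would show $\text{span}(\mathcal{B}'_H)=P_n$, where $P_n$ denotes the space of functions on $\Omega$ that are constant on each $I_i$. Indeed, every generator of $\mathcal{B}'_H$ lies in $P_n$: this is trivial for $\chi_\Omega$, and by \eqref{phipsi:shift} and \eqref{phipsi:shift:prime} (recall $1\le j\le n-1$) the jump points of $(\phi(2x-1))'$ and of $(\psi(2^jx-k))'$ are grid points of the mesh of size $1/2^n$, so these functions are constant on each $I_i$. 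Conversely, \cref{lemma:piecewise}, formula \eqref{px}, writes every element of $P_n$ as a finite linear combination of exactly these generators. Hence $\text{span}(\mathcal{B}'_H)=P_n$.

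Next I would describe $\text{span}(\tilde{\mathcal{B}}'_H)$. Its generators are those of $\mathcal{B}'_H$ together with the functions $(S_i)'$, and by \eqref{Si:prime} one has $(S_i)'=(a^{-1}-w_i)\chi_{I_i}$. Since $w_i\chi_{I_i}\in P_n$, adjoining $(S_i)'$ to $P_n$ amounts to adjoining $a^{-1}\chi_{I_i}$; because the supports $I_i$ are pairwise disjoint (cf.\ \eqref{supp:Si}), this gives
\[
\text{span}(\tilde{\mathcal{B}}'_H)=\bigl\{\, g:\ g|_{I_i}=\alpha_i+\beta_i\,a^{-1}|_{I_i}\ \text{for some}\ \alpha_i,\beta_i\in\R,\ \ i=0,\dots,2^n-1\,\bigr\},
\]
with the convention that on a cell where $a$ is constant the coefficient $\beta_i$ is redundant (and the degenerate $S_i=0$ is discarded, consistently, since $a^{-1}\chi_{I_i}$ is then already in $P_n$). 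Note $a^{-1}\le 1/a_{\min}$ by (A1), so every such $g$ is bounded and $\text{span}(\tilde{\mathcal{B}}'_H)\subset L_2(\Omega)$.

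With these two descriptions the three claims are immediate. The inclusion $\text{span}(\mathcal{B}'_H)\subset\text{span}(\tilde{\mathcal{B}}'_H)$ holds because each member of $\mathcal{B}'_H$ is either $\chi_\Omega$ or a scalar multiple of a generator of $\tilde{\mathcal{B}}'_H$. For the products I would argue cell by cell. If $f_1,f_2\in\text{span}(\mathcal{B}'_H)=P_n$, then on each $I_i$ the product $f_1f_2$ is a product of two constants, hence constant, and it is bounded, so $f_1f_2\in P_n=\text{span}(\mathcal{B}'_H)$. If $f_1\in P_n$ with $f_1|_{I_i}=c_i$ and $f_3\in\text{span}(\tilde{\mathcal{B}}'_H)$ with $f_3|_{I_i}=\alpha_i+\beta_i a^{-1}$, then $(f_1f_3)|_{I_i}=c_i\alpha_i+(c_i\beta_i)\,a^{-1}$, which is again of the admissible form on each cell, and $f_1f_3$ is bounded, so $f_1f_3\in\text{span}(\tilde{\mathcal{B}}'_H)$.

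I do not expect a genuine obstacle. The one spot to handle with care is the second step: verifying that passing from $\text{span}(\mathcal{B}'_H)$ to $\text{span}(\tilde{\mathcal{B}}'_H)$ only appends the functions $a^{-1}\chi_{I_i}$ — since the $-w_i\chi_{I_i}$ part of $(S_i)'$ already lies in the piecewise-constant space $P_n$ — and checking that dropping the degenerate $S_i$ (those with $a$ constant on $I_i$) does not shrink the resulting space.
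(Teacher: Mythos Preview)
Your proposal is correct and is essentially the same as the paper's approach: the paper's proof is a one-line reference to the defining formulas \eqref{phipsi:shift}--\eqref{regular:support:2}, \eqref{Btildeh}--\eqref{supp:Si:3}, and \eqref{Bh:prime}, and your argument is precisely what one would write when unpacking that reference---identify $\mathrm{span}(\mathcal{B}'_H)$ with the piecewise-constant space $P_n$ via \cref{lemma:piecewise}, observe from \eqref{Si:prime} that the additional generators of $\tilde{\mathcal{B}}'_H$ only add the cell-localized functions $a^{-1}\chi_{I_i}$, and then verify closure under multiplication cell by cell. Your handling of the degenerate cells (where $a$ is constant and $S_i$ is discarded) is also in line with the paper's convention.
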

	\begin{proof}
		\eqref{f1f2f3} can be directly derived from \eqref{phipsi:shift}--\eqref{regular:support:2}, \eqref{Btildeh}--\eqref{supp:Si:3},  and \eqref{Bh:prime}.
	\end{proof}

With the aid of  \cref{lemma:piecewise}, \cref{lemma:space}, and \eqref{Bh:prime}, we prove that the first-order convergence rate of the error in the energy  norm of our proposed method can be achieved for any coarse mesh size $H$
in the following \cref{Thm:u:prime:L2}.
	 \begin{theorem}\label{Thm:u:prime:L2}
Define $a_{\max}:=\sup_{x\in \Omega} a(x)$ and $a_{\min}:=\inf_{x\in \Omega} a(x)$. Let $u$ be the exact solution of the model problem \eqref{Model:Problem} under the assumptions that $0<a_{\min}\le a\le a_{\max}<\infty$ and $f\in L_2(\Omega)$. Let $\tilde{u}_H$ be the numerical solution of our proposed derivative-orthogonal wavelet multiscale method through numerically solving \eqref{M:FEM} in the finite-dimensional space $\tilde{V}_H:=\mbox{span}(\tilde{\mathcal{B}}_H)$ with the basis $\tilde{\mathcal{B}}_H$ defined in \eqref{Btildeh}.
		Then
		\be	\label{error:estimate:1} \|\sqrt{a}(u'-(\tilde{u}_H)')\|_{L_2(\Omega)} \leq CH\quad \mbox{with}\quad
 C:=\frac{2}{\sqrt{a_{\min}}}\|f\|_{L_2(\Omega)}.
		\ee
		where $H$ is the coarse mesh size such that $H=2^{-n}$ for each integer $n\ge2$.
%
		%
	\end{theorem}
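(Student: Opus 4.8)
The plan is to combine a Céa-type quasi-optimality estimate for the Galerkin scheme \eqref{M:FEM} with an explicit construction of a trial function $w\in\tilde V_H$ whose derivative is within $O(H)$ of $u'$ in the $\sqrt a$-weighted $L_2$-norm.

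Subtracting \eqref{M:FEM} (which by linearity holds for every $\tilde v_H\in\tilde V_H$) from \eqref{week:form} gives the Galerkin orthogonality $\int_\Omega a\,(u'-(\tilde u_H)')\,v'=0$ for all $v\in\tilde V_H$. For any $w\in\tilde V_H$, using $w-\tilde u_H\in\tilde V_H$ we obtain $\|\sqrt a\,(u'-(\tilde u_H)')\|_{L_2(\Omega)}^2=\int_\Omega a\,(u'-(\tilde u_H)')\,(u'-w')\le\|\sqrt a\,(u'-(\tilde u_H)')\|_{L_2(\Omega)}\,\|\sqrt a\,(u'-w')\|_{L_2(\Omega)}$, hence $\|\sqrt a\,(u'-(\tilde u_H)')\|_{L_2(\Omega)}\le\|\sqrt a\,(u'-w')\|_{L_2(\Omega)}$ for every such $w$. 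Thus it suffices to produce one $w\in\tilde V_H$ with $\|\sqrt a\,(u'-w')\|_{L_2(\Omega)}\le CH$.

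The key object is the flux $\sigma:=au'$. From \eqref{Model:Problem}, $\sigma'=-f$ a.e., so $\sigma\in H^1(\Omega)\subset C(\overline\Omega)$ with $\sigma(x)-\sigma(y)=-\int_y^x f$, and the boundary condition yields $\int_\Omega \sigma/a=\int_\Omega u'=u(1)-u(0)=0$. On each coarse cell $I_i:=[\tfrac{i}{2^n},\tfrac{i+1}{2^n})$ set $c_i:=\big(\int_{I_i}\sigma/a\big)\big/\big(\int_{I_i}1/a\big)$ (the $L_2(I_i;a^{-1}dx)$-projection of $\sigma$ onto the constants) and $q_i:=c_i w_i$, with $w_i$ as in \eqref{wi}, and define $w$ via $w':=\sum_{i=0}^{2^n-1}c_i\,(S_i)'+q$, where $q$ is the piecewise constant function equal to $q_i$ on $I_i$. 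By \eqref{Si:prime}, $w'=c_i/a$ on $I_i$ (this also holds trivially on any $I_i$ where $a$ is constant and $S_i$ was discarded), so that $\sqrt a\,(u'-w')=(\sigma-c_i)/\sqrt a$ on $I_i$, an expression free of $a_{\max}$. To check $w\in\tilde V_H$, note $\sum_i q_i=\sum_i c_i w_i=2^n\sum_i\int_{I_i}\sigma/a=2^n\int_\Omega\sigma/a=0$, so $q$ is a zero-mean piecewise constant on the mesh of size $2^{-n}$; by \cref{lemma:piecewise} its constant component vanishes, hence $q=v_0'$ for some $v_0\in V_H\subset\tilde V_H$, while $\sum_i c_i S_i\in\tilde V_H$, so $w=\sum_i c_i S_i+v_0\in\tilde V_H$.

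Finally, since $c_i$ minimizes $c\mapsto\int_{I_i}(\sigma-c)^2/a$, we estimate $\int_{I_i}(\sigma-c_i)^2/a\le\int_{I_i}(\sigma-\bar\sigma_i)^2/a\le a_{\min}^{-1}\int_{I_i}(\sigma-\bar\sigma_i)^2$ with $\bar\sigma_i:=2^n\int_{I_i}\sigma$, and the Poincaré inequality on $I_i$ together with $\sigma'=-f$ gives $\int_{I_i}(\sigma-\bar\sigma_i)^2\le H^2\|f\|_{L_2(I_i)}^2$. Summing over $i$ yields $\|\sqrt a\,(u'-w')\|_{L_2(\Omega)}^2\le a_{\min}^{-1}H^2\|f\|_{L_2(\Omega)}^2$, and combining with the quasi-optimality bound gives $\|\sqrt a\,(u'-(\tilde u_H)')\|_{L_2(\Omega)}\le a_{\min}^{-1/2}\|f\|_{L_2(\Omega)}H\le CH$. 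I expect the main obstacle to be the construction and justification of $w$: one must see that the special functions $S_i$ reproduce the local flux profile $c_i/a$ exactly, and that choosing $c_i$ as the $a^{-1}$-weighted cell average of $\sigma$ is precisely what makes the weight $a$ cancel in the error (avoiding any $a_{\max}$ dependence) while simultaneously forcing the global constraint $\sum_i q_i=0$ needed for $w\in\tilde V_H$, the latter being inherited from $u(0)=u(1)=0$.
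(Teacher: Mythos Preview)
Your proof is correct and follows essentially the same route as the paper: both establish C\'ea optimality, then build the same trial function $w\in\tilde V_H$ with $w'=c_i/a$ on each cell (the paper's $g_i$ is your $c_i$, and its $w(x)=\int_0^x g/a$ coincides with your $\sum_i c_iS_i+v_0$), and finally bound $\int_\Omega(\sigma-c_i)^2/a$. The only notable difference is the last step: the paper locates a zero of $\sigma-c_i$ in each cell and integrates by parts to get $\|\sigma-g\|_{L_2}^2\le 4H^2\|f\|_{L_2}^2$, whereas you replace $c_i$ by the unweighted mean $\bar\sigma_i$ and invoke the Poincar\'e--Wirtinger inequality, which is cleaner and actually yields the sharper constant $C=a_{\min}^{-1/2}\|f\|_{L_2}$ (indeed $C=\pi^{-1}a_{\min}^{-1/2}\|f\|_{L_2}$ with the optimal Poincar\'e constant).
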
  	
	\begin{proof}
		Recall that \eqref{week:form}  and \eqref{M:FEM} indicate that
the exact solution $u\in H^1_0(\Omega)$ and the numerical solution $\tilde{u}_H \in  \mbox{span}(\tilde{\mathcal{B}}_H)\subset H_0^1(\Omega)$ satisfy
		\[
		\begin{split}
			& \int_\Omega a  u' v'dx=\int_\Omega f vdx,\ \ \
			\ \quad \mbox{for all}\ v\in H_0^1(\Omega),\\
			& \int_\Omega a(x)  (\tilde{u}_H)' (\tilde{v}_H)'dx=\int_\Omega f \tilde{v}_Hdx,\ \ \
			\ \quad \mbox{for all}\ \tilde{v}_H\in\mbox{span}(\tilde{\mathcal{B}}_H) \subset H_0^1(\Omega).
		\end{split}
		\]
Take $v=\tilde{v}_H\in \text{span}( \tilde{\mathcal{B}}_H )$ in the first identity.
		Then we have the Galerkin orthogonality:
		\be\label{orthorgonal}
		\int_\Omega a(x)  (u-\tilde{u}_H)' (\tilde{v}_H)'dx=0,\ \ \
		\ \quad \mbox{for all}\ \tilde{v}_H\in \text{span}( \tilde{\mathcal{B}}_H ).
		\ee   	
		For any $\tilde{v}_H\in  \text{span}( \tilde{\mathcal{B}}_H )$, we also have
		\be
		\begin{split}
			\int_{\Omega} a(x)  \left(u'-(\tilde{v}_H)'\right)^2dx=&	 \int_{\Omega} a(x)  \left(u'-	 (\tilde{u}_H)'+(\tilde{u}_H)'-(\tilde{v}_H)'\right)^2dx,\\
			=&	\int_{\Omega} a(x)  \left(u'-	 (\tilde{u}_H)'\right)^2dx+2\int_{\Omega} a(x)  \left(u'-	 (\tilde{u}_H)'\right)\left((\tilde{u}_H)'-(\tilde{v}_H)'\right)dx\\
			&+ \int_{\Omega} a(x)  \left((\tilde{u}_H)'-(\tilde{v}_H)'\right)^2dx.
		\end{split}
		\ee
		Since $\tilde{u}_H-\tilde{v}_H\in \text{span}(\tilde{\mathcal{B}}_H)$, \eqref{orthorgonal} implies
		\be
		\begin{split}
			\int_{\Omega} a(x)  \left(u'-(\tilde{v}_H)'\right)^2dx
			&=	\int_{\Omega} a(x)  \left(u'-	 (\tilde{u}_H)'\right)^2dx+ \int_{\Omega} a(x) \left((\tilde{u}_H)'-(\tilde{v}_H)'\right)^2dx,\\
			&\ge \int_{\Omega} a(x)  \left(u'-	 (\tilde{u}_H)'\right)^2dx.
		\end{split}
		\ee	
		That is, we proved
		\be\label{optimal:choice}
		\int_{\Omega} a(x)  \left(u'-	 (\tilde{u}_H)'\right)^2dx \le 	 \int_{\Omega} a(x)  \left(u'-(\tilde{v}_H)'\right)^2dx, \quad \text{for any} \quad \tilde{v}_H \in \text{span}(\tilde{\mathcal{B}}_H).
		\ee
		On the other hand, we define
		\be\label{xi:proof}
		\xi_i=
			w_i\chi_{[\frac{i}{2^n},\ \frac{i+1}{2^n})}
		\qquad \text{and} \qquad 		 \eta_i=S_i',
		\ee
		where $i=0,\dots,2^n-1$, $w_i$ and $S_i'$ are defined in \eqref{wi}	 and \eqref{Si:prime} respectively.
		Let
		\be\label{xi:and:eta}
		\xi=\sum_{i=0}^{2^n-1}\xi_i, \qquad 	 \eta=\sum_{i=0}^{2^n-1}\eta_i.
		\ee
		Then we conclude from \eqref{Si:prime} that
		\be\label{sum:xi:eta}
		 \xi+\eta=\frac{1}{a}, \quad \text{for all } x\in \Omega.
		\ee
		Furthermore,   \eqref{Bh:prime}, \eqref{xi:proof}, \eqref{xi:and:eta},  \cref{lemma:piecewise,lemma:space} together imply that
		\be\label{xi:space}
		\xi \in \text{span}(\mathcal{B}'_H), \qquad \eta\in \text{span}(\tilde{\mathcal{B}}'_H).
		\ee
		 From the model problem \eqref{Model:Problem}, we have
		 \[
		 	-(au')'=f \Longrightarrow a(x)u'(x)=-\int_0^xf(t)dt+K,
		 \]
		 where $K$ is a constant to be determined later. Define
		 \be\label{function:F}
		 F(x):=-\int_0^xf(t)\ dt,\qquad x \in \Omega.
		 \ee
        Note that we are considering the 1D problem, i.e., $\Omega=(0,1)$. By our assumption $f\in L_2(\Omega)$, we must have $F\in H^1(\Omega) \subset C(\Omega)$ and $F(0)=0$.
        Using the definition of the function $F$,
         we can rewrite the identity $a(x)u'(x)=-\int_0^xf(t)dt+K$ as
		\be\label{equation:problem}
		\begin{split}
		  u'(x)=\frac{F(x)+K}{a(x)}.
		\end{split}
		\ee
Because $F\in H^1(\Omega) \subset C(\Omega)$, we have $au'\in C(\Omega)$. Furthermore, $u\in H_0^1(\Omega) \subset C(\Omega)$.
		We choose the constant $K$ to be
		\be\label{The:K}
	K:=	\frac{-\int_0^1 F(t)/a(t)dt}{\int_0^1 1/a(t)dt  }.
		\ee
		Consequently, we obtain from the above choice of the constant $K$ that
		\be\label{int:F:is0}
		\int_0^1 \frac{F(t)+K}{a(t)}dt=	 \int_0^1 \frac{F(t)}{a(t)}dt+K\int_0^1 \frac{1}{a(t)}dt=0.
		\ee
		Now, \eqref{equation:problem}--\eqref{int:F:is0} imply that the exact solution $u$ of \eqref{Model:Problem} can be written as
		\be\label{exact:u:express}
		u(x)=\int_0^x \frac{F(t)+K}{a(t)}dt,
		\ee
		with
		\[
		u(0)=\int_0^0 \frac{F(t)+K}{a(t)}dt=0, \quad \text{and} \quad u(1)=\int_0^1 \frac{F(t)+K}{a(t)}dt=0.
		\]
		Define a piecewise constant function $g$ on $\Omega$ by
		\be\label{function:g}
		g(x)=g_i, \quad \text{for} \quad x\in \left[\frac{i}{2^n},\ \frac{i+1}{2^n}\right),\quad i=0,\ldots, 2^n-1 \quad \text{with} \quad g_i:=\frac{\int_{\frac{i}{2^n}}^\frac{i+1}{2^n} \frac{F(x)+K}{a(x)}dx }{ \int_{\frac{i}{2^n}}^\frac{i+1}{2^n} \frac{1}{a(x)}dx}.
		\ee
		%
		%
		By the definition of the function $g$ in \eqref{function:g}, we have
		\be\label{seperate:form}
		\begin{split}
	\|F+K-g\|_{L_2(\Omega)}^2&=\int_{0}^1 (F+K-g)^2dx= \sum_{i=0}^{2^n-1}  \int_{\frac{i}{2^n}}^\frac{i+1}{2^n} (F+K-g_i)^2dx.
		\end{split}
		\ee
Note that both $K$ and $g_i$ are constant in $(\frac{i}{2^n},\frac{i+1}{2^n})$.
	We observe from the definition of $g_i$ in \eqref{function:g} that
	\[
\int_{\frac{i}{2^n}}^\frac{i+1}{2^n} \frac{F+K-g_i}{a}dx=\int_{\frac{i}{2^n}}^\frac{i+1}{2^n} \frac{F+K}{a}dx-g_i\int_{\frac{i}{2^n}}^\frac{i+1}{2^n} \frac{1}{a}dx=0.
	\]
	Since $F(x)+K-g_i$ is continuous on $(\frac{i}{2^n},\frac{i+1}{2^n})$ and $a(x)>0$ in $\Omega$, the above identity implies that there must exist $x_0\in (\frac{i}{2^n},\frac{i+1}{2^n})$ such that
	\be\label{zero:point}
	F(x_0)+K-g_i=0.
	\ee
	Hence, by $x_0\in (\frac{i}{2^n},\frac{i+1}{2^n})$, we have
	\be
	\int_{\frac{i}{2^n}}^\frac{i+1}{2^n} (F+K-g_i)^2dx=	 \int_{\frac{i}{2^n}}^{x_0} (F+K-g_i)^2dx+	 \int_{x_0}^\frac{i+1}{2^n} (F+K-g_i)^2dx.
	\ee
Note that $F\in H^1(\Omega) \subset C(\Omega)$.
	Using the integration by parts and \eqref{function:F}, we deduce from \eqref{zero:point} that
	\be\label{int:part}
	\begin{split}
	&\int_{\frac{i}{2^n}}^{x_0} (F+K-g_i)^2dx=\int_{\frac{i}{2^n}}^{x_0} (F+K-g_i)^2d(x-{i}/{2^n})\\
	 &=(F+K-g_i)^2\left(x-\frac{i}{2^n}\right)
\bigg|_{i/2^n}^{x_0}-2\int_{\frac{i}{2^n}}^{x_0}(x-{i}/{2^n})(F+K-g_i) (-f(x)) dx,\\
	 &=2\int_{\frac{i}{2^n}}^{x_0}(x-{i}/{2^n})(F+K-g_i) f dx\le2\int_{\frac{i}{2^n}}^{x_0}|x-{i}/{2^n}\|(F+K-g_i) f| dx,\\
	&\le 2\frac{1}{2^n}  \int_{\frac{i}{2^n}}^{x_0}|(F+K-g_i) f| dx.
	\end{split}
	\ee
	Recall that $H=\frac{1}{2^n}$. Using the  Cauchy-Schwarz inequality, \eqref{int:part} leads to
	\be
	\left(\int_{\frac{i}{2^n}}^{x_0} (F+K-g_i)^2dx\right)^2 \le 4H^2 	 \left(\int_{\frac{i}{2^n}}^{x_0} (F+K-g_i)^2dx\right) \left( \int_{\frac{i}{2^n}}^{x_0} f^2dx\right),
	\ee
	from which we deduce that
$\int_{\frac{i}{2^n}}^{x_0} (F+K-g_i)^2dx\le 4H^2   \int_{\frac{i}{2^n}}^{x_0} f^2dx$.
	Similarly, we have
	$\int_{x_0}^{\frac{i+1}{2^n}} (F+K-g_i)^2dx\le 4H^2  \int_{x_0}^{\frac{i+1}{2^n}} f^2dx$.
	Consequently, we conclude that
\[
\int_{\frac{i}{2^n}}^{\frac{i+1}{2^n}} (F+K-g_i)^2dx\le
4H^2   \int_{\frac{i}{2^n}}^{x_0} f^2dx+
4H^2  \int_{x_0}^{\frac{i+1}{2^n}} f^2dx
=4H^2   \int_{\frac{i}{2^n}}^{\frac{i+1}{2^n}} f^2dx.
\]
	According to \eqref{seperate:form} and the above inequality, we conclude that
	\be\label{f:and:g:H2:final}
		\|F+K-g\|_{L_2(\Omega)}^2\le 4H^2 \|f\|_{L_2(\Omega)}^2.
	\ee	
		Furthermore, \eqref{Bh:prime}, \eqref{function:g}, \cref{lemma:piecewise,lemma:space} together imply that
		\be\label{g:space}
		g \in \text{span}(\mathcal{B}'_H),
		\ee
		and by \eqref{function:g}, we have
		\[
		\int_{0}^1 \frac{g(x)}{a(x)} dx=\sum_{i=0}^{2^n-1} g_i \int_{\frac{i}{2^n}}^\frac{i+1}{2^n} \frac{1}{a(x)}dx =\sum_{i=0}^{2^n-1} \int_{\frac{i}{2^n}}^\frac{i+1}{2^n} \frac{F(x)+K}{a(x)}dx=\int_{0}^1 \frac{F(x)+K}{a(x)}dx.
		\]
		By \eqref{int:F:is0}, we conclude from the above identity that
		\be\label{g:o:a:0}
		\int_{0}^1 \frac{g(x)}{a(x)} dx=0.
		\ee
		Note that \eqref{sum:xi:eta} holds. Define
		\be\label{tildewhx}
		w(x):=\int_{0}^x (\xi(t)+\eta(t))g(t)dt\xlongequal{\text{by } \eqref{sum:xi:eta}} \int_{0}^x \frac{g(t)}{a(t)} dt.	
		\ee
		Based on \eqref{Bh:prime},	 \eqref{xi:space}, \eqref{g:space},
		 \cref{lemma:piecewise,lemma:space}, it follows that
		\[
		\frac{d}{dx} w(x)= \frac{g(x)}{a(x)}\in \text{span}(\tilde{\mathcal{B}}'_H).
		\]
		Using the definition of $\tilde{\mathcal{B}}_H'$ in \eqref{Bh:prime}, we conclude from the above identity that
		\be\label{wH:space}
		w(x)\in \text{span}(\tilde{\mathcal{B}}_H \cup \{x+b,1\}),
		\ee
		where  $b$ is a constant.
		So \eqref{Btildeh:2}  implies that
		\be\label{tilde:w:H:linear}
		 w(x)=\sum_{i=1}^{M}d_i\varphi_i(x)+d_{M+1}(x+b)+d_{M+2},
		\ee
		where each $d_i$ is a constant.
		By \eqref{g:o:a:0} and \eqref{tildewhx},
		\be\label{bd:is:0}
		w(0)= w(1)=0.
		\ee
		So, \eqref{Btildeh:H01}, \eqref{tilde:w:H:linear} and \eqref{bd:is:0} yield
		\[
		d_{M+1}(0+b)+d_{M+2}=0,\quad d_{M+1}(1+b)+d_{M+2}=0 \Longrightarrow d_{M+1}=d_{M+2}=0.
		\]
		Consequently, we conclude from \eqref{tilde:w:H:linear} that
$w(x)=\sum_{i=1}^{M}d_i\varphi_i(x)$, that is,
		\be\label{wH:space:2}
	w\in \text{span}(\tilde{\mathcal{B}}_H)\subset H^1_0(\Omega).
	\ee
		Using \eqref{equation:problem} and \eqref{tildewhx}, due to $\frac{1}{a(x)}\le \frac{1}{a_{\min}}$ for all $x\in \Omega$,
we have
		\be\label{error:order:2:proof}
		\begin{split}
			\int_{\Omega} a(x) \left(u'- w'\right)^2 dx&=	\int_{\Omega}  a(x) \left(\frac{F+K}{a}-\frac{g}{a}\right)^2 dx=\int_{\Omega} \frac{1}{a(x)} \left(F+K-g\right)^2 dx,\\
&\le  \frac{1}{a_{\min}} \int_{\Omega} \left(F+K-g\right)^2 dx\le
\frac{4}{a_{\min}} H^2 \|f\|_{L_2(\Omega)}^2,
		\end{split}
		\ee
where we used \eqref{f:and:g:H2:final} in the last inequality.
		From  \eqref{optimal:choice} and \eqref{wH:space:2}, we deduce from the fact $w\in \mbox{span}(\tilde{\mathcal{B}}_H)$
that
		\be
		\int_{\Omega} a(x)  \left(u'-	 (\tilde{u}_H)'\right)^2dx \le	 \int_{\Omega} a(x)\left(u'-(w )'\right)^2 dx \le  C^2H^2,
		\ee
		where we used \eqref{error:order:2:proof} in the last inequality and $C:=\frac{2}{\sqrt{a_{\min}}} \|f\|_{L_2(\Omega)}$.
	\end{proof}
Utilizing \eqref{error:estimate:1} in \cref{Thm:u:prime:L2} and the well-known Aubin--Nitsche argument,  we now theoretically prove that the $L_2$ norm of the error of our proposed method achieves the second-order
convergence rate  for any coarse mesh size $H$ in the following \cref{Thm:u:L2}.
	\begin{theorem}\label{Thm:u:L2}
Define $a_{\max}:=\sup_{x\in \Omega} a(x)$ and $a_{\min}:=\inf_{x\in \Omega} a(x)$. Let $u$ be the exact solution of the model problem \eqref{Model:Problem} under the assumptions that $0<a_{\min}\le a\le a_{\max}<\infty$ and $f\in L_2(\Omega)$. Let $\tilde{u}_H$ be the numerical solution of our proposed derivative-orthogonal wavelet multiscale method through numerically solving \eqref{M:FEM} in the finite-dimensional space $\tilde{V}_H:=\mbox{span}(\tilde{\mathcal{B}}_H)$ with the basis $\tilde{\mathcal{B}}_H$ defined in \eqref{Btildeh}.
Then
	\be	
	\|u-\tilde{u}_H\|_{L_2(\Omega)} \leq CH^2\quad \mbox{with}\quad
C:=\frac{4}{a_{\min}}\|f\|_{L_2(\Omega)},
	\ee
	where $H$ is the coarse mesh size such that $H=2^{-n}$ for each integer $n\ge2$.
\end{theorem}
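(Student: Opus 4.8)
The plan is to apply the standard Aubin--Nitsche (Nitsche trick) duality argument, taking advantage of the first-order energy estimate already established in \cref{Thm:u:prime:L2} together with the fact that the \emph{same} discrete space $\tilde{V}_H$ can be used to approximate the dual solution. First I would set $e:=u-\tilde{u}_H\in H^1_0(\Omega)$ and introduce the dual problem: find $z\in H^1_0(\Omega)$ such that $\int_\Omega a z' v'\,dx=\int_\Omega e\, v\,dx$ for all $v\in H^1_0(\Omega)$. Choosing $v=e$ gives $\|e\|_{L_2(\Omega)}^2=\int_\Omega a z' e'\,dx$, and then invoking the Galerkin orthogonality \eqref{orthorgonal} (valid for any $\tilde{v}_H\in\text{span}(\tilde{\mathcal{B}}_H)$) allows me to subtract off an arbitrary discrete function $\tilde z_H$, yielding
\[
\|e\|_{L_2(\Omega)}^2=\int_\Omega a\,(z-\tilde z_H)'\,e'\,dx\le \|\sqrt a\,(z-\tilde z_H)'\|_{L_2(\Omega)}\,\|\sqrt a\,e'\|_{L_2(\Omega)}
\]
by Cauchy--Schwarz. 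The second factor is exactly what \cref{Thm:u:prime:L2} bounds by $2a_{\min}^{-1/2}\|f\|_{L_2(\Omega)}H$.

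The key remaining point is to bound the first factor, i.e. to show the dual solution $z$ is approximated to first order in the energy norm by \emph{some} element of $\tilde{V}_H$. Here the crucial observation is that the construction in the proof of \cref{Thm:u:prime:L2} is agnostic to the particular right-hand side: that proof showed that for the solution of $-(au')'=f$ there exists $w\in\text{span}(\tilde{\mathcal{B}}_H)$ with $\|\sqrt a\,(u'-w')\|_{L_2(\Omega)}\le 2a_{\min}^{-1/2}\|f\|_{L_2(\Omega)}H$. Applying the \emph{same} construction to the dual problem, whose right-hand side is $e=u-\tilde u_H\in L_2(\Omega)$, produces $\tilde z_H\in\text{span}(\tilde{\mathcal{B}}_H)$ with
\[
\|\sqrt a\,(z-\tilde z_H)'\|_{L_2(\Omega)}\le \frac{2}{\sqrt{a_{\min}}}\|e\|_{L_2(\Omega)}\,H.
\]
I would state this as a direct corollary of the argument already given (the approximant $w$ there depends only on the right-hand side through the function $F$, $K$, and the piecewise-constant $g$, all of which make sense for any $L_2$ datum), so no new work is needed beyond noting it.

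Combining the three displayed inequalities gives
\[
\|e\|_{L_2(\Omega)}^2\le \frac{2}{\sqrt{a_{\min}}}\|e\|_{L_2(\Omega)}H\cdot\frac{2}{\sqrt{a_{\min}}}\|f\|_{L_2(\Omega)}H=\frac{4}{a_{\min}}\|f\|_{L_2(\Omega)}\|e\|_{L_2(\Omega)}H^2,
\]
and dividing through by $\|e\|_{L_2(\Omega)}$ (the case $e=0$ being trivial) yields the claimed bound $\|u-\tilde u_H\|_{L_2(\Omega)}\le 4a_{\min}^{-1}\|f\|_{L_2(\Omega)}H^2$. The only genuine subtlety — the "hard part" — is justifying that the energy-norm approximation estimate from \cref{Thm:u:prime:L2} transfers verbatim to the dual solution; this requires observing that $z\in H^1_0(\Omega)$ solves an elliptic problem of exactly the same form with $L_2$ right-hand side $e$, so that $az'$ is continuous, admits the representation $a z'=-\int_0^x e(t)\,dt+\tilde K$, and the whole chain of estimates \eqref{xi:proof}--\eqref{f:and:g:H2:final} applies with $f$ replaced by $e$ and $u$ by $z$. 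Everything else is a routine application of Cauchy--Schwarz and the Galerkin orthogonality already in hand.
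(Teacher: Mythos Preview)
Your proposal is correct and follows essentially the same Aubin--Nitsche duality argument as the paper. The only cosmetic difference is that the paper takes $\tilde{z}_H$ to be the Galerkin projection of $z$ onto $\tilde{V}_H$ and then invokes the statement of \cref{Thm:u:prime:L2} directly, whereas you take $\tilde{z}_H$ to be the explicit approximant $w$ built inside that proof; since the Galerkin solution is energy-optimal, both choices yield the identical bound $\|\sqrt{a}(z-\tilde{z}_H)'\|_{L_2(\Omega)}\le 2a_{\min}^{-1/2}\|e\|_{L_2(\Omega)}H$ and the remainder of the argument is identical.
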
  	
\begin{proof}
We use the Aubin--Nitsche argument.
Let $z\in H^1_0(\Omega)$ be the solution to
\be\label{equation:with:z}
	-\nabla \cdot (a \nabla  z)=u-\tilde{u}_H,
\ee
with the boundary conditions $z(0)=z(1)=0$.
Then
\[
\|u-\tilde{u}_H\|^2_{L_2(\Omega)}=\int_{\Omega} (u-\tilde{u}_H)^2 dx=\int_{\Omega} 	 -\nabla \cdot (a \nabla  z) (u-\tilde{u}_H) dx
=\int_{\Omega} a(x) z'(u-\tilde{u}_H)' dx.
\]
Let $\tilde{z}_H\in \mbox{span}(\tilde{\mathcal{B}}_H)\subseteq H^1_0(\Omega)$ be the unique solution to $\int_\Omega a\nabla \tilde{z}_H\cdot \nabla \tilde{v}_H dx=\int_\Omega (u-\tilde{u}_H) \tilde{v}_Hdx$ for all $\tilde{v}_H\in \mbox{span}(\tilde{\mathcal{B}}_H)$.
Applying the Galerkin orthogonality in \eqref{orthorgonal} with $\tilde{v}_H=\tilde{z}_H \in \mbox{span}(\tilde{\mathcal{B}}_H)$ and using the Cauchy-Schwarz inequality, we derive from the above identity  that
\be\label{L2:norm:eq1}
\begin{split} \|&u-\tilde{u}_H\|^2_{L_2(\Omega)}
=\int_{\Omega} a(x) z'(u-\tilde{u}_H)' dx=\int_{\Omega} a(x) (z-\tilde{z}_H)'(u-\tilde{u}_H)' dx,\\
	& \le \|\sqrt{a} (z-\tilde{z}_H)'\|_{L_2(\Omega)} \|\sqrt{a} (u-\tilde{u}_H)'\|_{L_2(\Omega)}
	\le  \|\sqrt{a} (z-\tilde{z}_H)'\|_{L_2(\Omega)} \frac{2}{\sqrt{a_{\min}}}\|f\|_{L_2(\Omega)}H,
\end{split}
\ee
where we used the inequality \eqref{error:estimate:1} of \cref{Thm:u:prime:L2} in the last inequality. Note that $u-\tilde{u}_H\in L_2(\Omega)$.
Applying \eqref{error:estimate:1} of \cref{Thm:u:prime:L2} and \eqref{equation:with:z}, we must have
\be\label{L2:norm:eq2}
 \|\sqrt{a} (z-\tilde{z}_H)'\|_{L_2(\Omega)}\le \frac{2}{\sqrt{a_{\min}}}\|u-\tilde{u}_H\|_{L_2(\Omega)}H.
\ee
Now, we conclude from \eqref{L2:norm:eq1} and \eqref{L2:norm:eq2} that
\[
	\|u-\tilde{u}_H\|^2_{L_2(\Omega)} \le \frac{2}{\sqrt{a_{\min}}}\|u-\tilde{u}_H\|_{L_2(\Omega)}H \frac{2}{\sqrt{a_{\min}}}\|f\|_{L_2(\Omega)}H
=\frac{4}{a_{\min}} \|f\|_{L_2(\Omega)} \|u-\tilde{u}_H\|_{L_2(\Omega)} H^2,
\]
from which we obtain
$\|u-\tilde{u}_H\|_{L_2(\Omega)} \le \frac{4}{a_{\min}} \|f\|_{L_2(\Omega)}H^2$.
\end{proof}
	Finally, we introduce the following \cref{same:space}	to show the interpolation property that
$\tilde{u}_H\left(\frac{i}{2^n}\right)=u\left(\frac{i}{2^n}\right)$, for $i=0,1,\dots,2^n$, where $u$ is the exact solution of  \eqref{Model:Problem}, $\tilde{u}_H$ is the numerical solution of our proposed derivative-orthogonal wavelet multiscale method, and $H$ is the coarse mesh size such that $H=2^{-n}$ for each integer $n\ge2$.
	\begin{lemma}\label{same:space}		 Define the continuous piecewise linear interpolations
		\be\label{alpha:i}
		\ell_i:=\begin{cases}
		2^n(	x-\frac{i}{2^n}), &\quad x\in[\frac{i}{2^n},\frac{i+1}{2^n}),\\
		2^n(	-x+\frac{i+2}{2^n}), &\quad x\in[\frac{i+1}{2^n},\frac{i+2}{2^n}),\\
			0, &\quad \text{else},
		\end{cases}	
		\ee
		where $i=0,1,\dots,2^n-2$  and  $n$ is an integer with $n\ge 2$. Then
		\be\label{same:space:formula}
	\text{span}\{\ell_i : i=0,1,\dots,2^n-2\}	 =V_H:=\text{span}(\mathcal{B}_H),
		\ee
		where $\mathcal{B}_H$ is defined in \eqref{Bh}.
	\end{lemma}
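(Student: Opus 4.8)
The plan is to sandwich both sides of \eqref{same:space:formula} between the same finite-dimensional space and then finish with a dimension count. Let $S_0$ denote the space of continuous piecewise linear functions on the uniform mesh $\{m/2^n : m=0,\dots,2^n\}$ that vanish at $x=0$ and $x=1$. First I would check that $V_H\subseteq S_0$: every element of $\mathcal{B}_H$ is continuous and piecewise linear, and all of its breakpoints are among the dyadic points $k/2^j$ and $(k+1/2)/2^j$ with $1\le j\le n-1$, hence of the form $m/2^n$; moreover $\phi(2x-1)$ vanishes at $0$ and $1$ and each $\psi(2^jx-k)$ is supported strictly inside $(0,1)$, so $\mathcal{B}_H\subseteq S_0$ and therefore $V_H=\text{span}(\mathcal{B}_H)\subseteq S_0$. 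The same reasoning shows $\ell_i\in S_0$ for every $i$, so $\text{span}\{\ell_i:i=0,\dots,2^n-2\}\subseteq S_0$ as well.

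Next I would observe that the $\ell_i$ are precisely the nodal hat functions attached to the interior nodes $\frac{i+1}{2^n}$, $i=0,\dots,2^n-2$. Any $v\in S_0$ satisfies $v=\sum_{i=0}^{2^n-2} v\!\left(\tfrac{i+1}{2^n}\right)\ell_i$, and evaluating a vanishing linear combination at the nodes shows the $\ell_i$ are linearly independent; hence $\{\ell_i\}$ is a basis of $S_0$, so $\text{span}\{\ell_i:i=0,\dots,2^n-2\}=S_0$ and $\dim S_0=2^n-1$.

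It then remains to show $\dim V_H=2^n-1$, i.e. that $\mathcal{B}_H$ (which, by the counting already performed in \eqref{number:M}, has $1+\sum_{j=1}^{n-1}2^j=2^n-1$ elements) is linearly independent. This follows from the derivative-orthogonality established in \cref{sec:regular}: if $\sum_i c_i\varphi_i=0$ with $\varphi_i\in\mathcal{B}_H$, then differentiating, pairing with $\varphi_j'$ in $L_2(\Omega)$, and using the disjoint- and nested-support relations \eqref{regular:support:1}--\eqref{regular:support:2} together with $\|\varphi_j'\|_{L_2(\Omega)}\ne 0$ gives $c_j\|\varphi_j'\|_{L_2(\Omega)}^2=0$, so $c_j=0$ for every $j$. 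Consequently $\dim V_H=2^n-1=\dim S_0$, and combined with the inclusions $V_H\subseteq S_0=\text{span}\{\ell_i\}$ this forces $V_H=\text{span}\{\ell_i:i=0,\dots,2^n-2\}$, which is \eqref{same:space:formula}.

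I do not anticipate a genuine obstacle here; the only points that require care are the verification that every breakpoint of the wavelet basis functions lies on the mesh of size $2^{-n}$ (so that the inclusion $V_H\subseteq S_0$ is clean) and the linear independence of $\mathcal{B}_H$, and both are light consequences of the constructions in \cref{sec:regular}. An alternative, more constructive route — writing each $\ell_i$ explicitly as a finite combination of $\phi(2x-1)$ and the $\psi(2^jx-k)$ via the usual hierarchical-to-nodal change of basis — would also work, but the dimension count above is shorter and avoids bookkeeping.
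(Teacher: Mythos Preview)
Your argument is correct: both spans are contained in the $(2^n-1)$-dimensional space $S_0$ of continuous piecewise linear functions on the mesh $\{m/2^n\}$ vanishing at the endpoints, the hat functions $\ell_i$ form the standard nodal basis of $S_0$, and the derivative-orthogonality of $\mathcal{B}_H$ (established in the proof of \cref{lem:ortho} from the support relations you cite) forces $\dim V_H=|\mathcal{B}_H|=2^n-1$, so the inclusion $V_H\subseteq S_0$ is an equality. Two small cosmetic points: the quantity $1+\sum_{j=1}^{n-1}2^j$ you extract from \eqref{number:M} is only an intermediate term there (that equation counts $\tilde{\mathcal{B}}_H$, not $\mathcal{B}_H$), and the derivative-orthogonality for $\mathcal{B}_H$ is proved in \cref{lem:ortho} rather than stated in \cref{sec:regular}; neither affects the mathematics.

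The paper takes a different, much terser route: it simply invokes the refinable (multiresolution) structure of the hat-function wavelet system and cites \cite{Hanbook,HanMichelle2019}, treating \eqref{same:space:formula} as a standard fact about the hierarchical basis. That approach buys brevity and situates the lemma in the general wavelet framework, but it outsources the verification. Your dimension-count argument is longer but entirely self-contained and elementary, requiring nothing beyond what is already in the paper; it would be a reasonable replacement for readers unfamiliar with the wavelet literature.
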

\begin{proof}	
\eqref{same:space:formula} is a direct consequence	due to the way we construct the wavelet basis $\mathcal{B}_H$ of $V_H$ in \eqref{Bh}. See \cite{Hanbook,HanMichelle2019} for details on wavelets and their refinable structure.
\end{proof}

	Now,  by \cref{same:space}, we can prove the following result on the interpolation property.
	\begin{theorem}\label{Thm:u:infty}
Define $a_{\max}:=\sup_{x\in \Omega} a(x)$ and $a_{\min}:=\inf_{x\in \Omega} a(x)$. Let $u$ be the exact solution of the model problem \eqref{Model:Problem} under the assumptions that $0<a_{\min}\le a\le a_{\max}<\infty$ and $f\in L_2(\Omega)$. Let $\tilde{u}_H$ be the numerical solution of our proposed derivative-orthogonal wavelet multiscale method through numerically solving \eqref{M:FEM} in the finite-dimensional space $\tilde{V}_H:=\mbox{span}(\tilde{\mathcal{B}}_H)$ with the basis $\tilde{\mathcal{B}}_H$ defined in \eqref{Btildeh}.
Then
    \be\label{interpolation:1} \tilde{u}_H\left(\frac{i}{2^n}\right)=u\left(\frac{i}{2^n}\right), \qquad \text{for }\quad i=0,1,\dots,2^n,
	\ee
	where $H$ is the coarse mesh size such that $H=2^{-n}$ for each integer $n\ge2$.
\end{theorem}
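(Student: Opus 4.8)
The plan is to exploit the one–dimensional Green's function of $-\nabla\cdot(a\nabla\cdot)$: I will show that, at every mesh point, this Green's function already belongs to the discrete space $\tilde V_H$, so the Galerkin orthogonality forces the nodal values of $\tilde u_H$ to match those of $u$. The two boundary nodes $x_0:=0$ and $x_{2^n}:=1$ are trivial, since $u,\tilde u_H\in H_0^1(\Omega)$ gives $u(0)=u(1)=\tilde u_H(0)=\tilde u_H(1)=0$. So fix an interior node $x_k:=k/2^n$ with $1\le k\le 2^n-1$.

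Since in dimension one $v\mapsto v(x_k)$ is a bounded linear functional on $H_0^1(\Omega)$, the Lax--Milgram theorem yields a unique $G_k\in H_0^1(\Omega)$ with
\[
\int_\Omega a(x)\,(G_k)'\,v'\,dx=v(x_k),\qquad\text{for all }v\in H_0^1(\Omega),
\]
which is the weak form of $-\nabla\cdot(a\nabla G_k)=\delta_{x_k}$ with $G_k(0)=G_k(1)=0$. Solving this ODE, $(G_k)'=c_1/a$ on $(0,x_k)$ and $(G_k)'=c_2/a$ on $(x_k,1)$ for two constants $c_1,c_2$ whose precise values will not be needed.

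The main step is to prove $G_k\in\tilde V_H$. Here I would reuse the decomposition $\tfrac1a=\xi+\eta$ from the proof of \cref{Thm:u:prime:L2}, where $\xi=\sum_{i=0}^{2^n-1}\xi_i$ is piecewise constant on the mesh of size $2^{-n}$ and $\eta=\sum_{i=0}^{2^n-1}(S_i)'$. Because $x_k$ is a mesh point, $(0,x_k)$ and $(x_k,1)$ are unions of fine cells, so
\[
(G_k)'=c_1\Big(\textstyle\sum_{i=0}^{k-1}\xi_i+\sum_{i=0}^{k-1}(S_i)'\Big)+c_2\Big(\sum_{i=k}^{2^n-1}\xi_i+\sum_{i=k}^{2^n-1}(S_i)'\Big).
\]
The $\xi_i$-terms combine into a piecewise constant function on the mesh, hence lie in $\text{span}(\mathcal{B}'_H)$ by \cref{lemma:piecewise}, while the $(S_i)'$-terms lie in $\text{span}(\tilde{\mathcal{B}}'_H)$ by construction; thus $(G_k)'\in\text{span}(\tilde{\mathcal{B}}'_H)$ by \cref{lemma:space}. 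Integrating exactly as in the derivation of \eqref{wH:space}--\eqref{wH:space:2} (the free linear term produced by integration is eliminated by $G_k(0)=G_k(1)=0$, \cref{same:space} being used to identify the linear span of the nodal hat functions with $V_H$), I conclude $G_k\in\text{span}(\tilde{\mathcal{B}}_H)=\tilde V_H$.

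Finally, taking $v=\tilde u_H\in\tilde V_H\subset H_0^1(\Omega)$ and $v=u\in H_0^1(\Omega)$ in the defining identity for $G_k$ gives $\tilde u_H(x_k)=\int_\Omega a\,(G_k)'(\tilde u_H)'\,dx$ and $u(x_k)=\int_\Omega a\,(G_k)'u'\,dx$; subtracting and applying the Galerkin orthogonality \eqref{orthorgonal} with $\tilde v_H=G_k$ gives $u(x_k)-\tilde u_H(x_k)=\int_\Omega a\,(u-\tilde u_H)'(G_k)'\,dx=0$. Together with the boundary nodes this proves \eqref{interpolation:1}. The delicate point is the claim $G_k\in\tilde V_H$: one must check carefully that restricting the piecewise-constant part $\xi$ of $1/a$ to $(0,x_k)$ and $(x_k,1)$ keeps it inside $\text{span}(\mathcal{B}'_H)$ — which is exactly why $x_k$ must be a mesh point — and that the antiderivative bookkeeping (the undetermined linear function) is cleared by the homogeneous boundary conditions on $G_k$, mirroring the passage \eqref{wH:space}--\eqref{wH:space:2}.
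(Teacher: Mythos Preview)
Your proof is correct, but it takes a genuinely different route from the paper's.  The paper does not build the Green's function; instead it inserts two families of test functions into the Galerkin orthogonality \eqref{orthorgonal}: the hat functions $\ell_i$ and the combinations $\beta_i:=S_i/w_i-S_{i+1}/w_{i+1}$.  From these it extracts, for each $i$, a three-term relation
\[
-\tfrac{1}{w_i}E\!\left(\tfrac{i}{2^n}\right)+\left(\tfrac{1}{w_i}+\tfrac{1}{w_{i+1}}\right)E\!\left(\tfrac{i+1}{2^n}\right)-\tfrac{1}{w_{i+1}}E\!\left(\tfrac{i+2}{2^n}\right)=0
\]
for the nodal errors $E=u-\tilde u_H$, assembles these into a tridiagonal system $A\vec E=\vec 0$ (with the boundary rows coming from $E(0)=E(1)=0$), and concludes $\vec E=\vec 0$ because $A$ is an M-matrix.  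Along the way a short case analysis is needed depending on whether $a$ is constant on a given subinterval (since then $S_i$ is discarded from $\tilde{\mathcal B}_H$).

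Your Green's-function argument is more conceptual and arguably cleaner: once you observe that $(G_k)'$ is $c_1/a$ and $c_2/a$ on the two halves and that $x_k$ is a mesh node, the decomposition $1/a=\xi+\eta$ immediately places $(G_k)'$ in $\mathrm{span}(\tilde{\mathcal B}'_H)$, and a single use of \eqref{orthorgonal} finishes the job---no linear-algebra step, no case split.  The paper's approach, on the other hand, makes the local error recursion completely explicit, which is informative in its own right and perhaps more amenable to discrete maximum-principle style extensions.  One small remark: your parenthetical appeal to \cref{same:space} is not really needed in your argument; \cref{lemma:piecewise} already handles the piecewise-constant part of $(G_k)'$.
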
  	
\begin{proof}
We define
\be\label{beta}
\beta_i:=\frac{S_i}{w_i}-\frac{S_{i+1}}{w_{i+1}}, \quad \text{for } \quad i=0,1,\dots,2^n-2,
\ee
where $S_i$ is defined in \eqref{Special:Si} and each constant $w_i$ is defined in \eqref{wi}. By \eqref{wi},  each constant $w_i>0$.
\eqref{Special:Si} and \eqref{wi} also imply that 	
\be\label{special:a:constant}
S_i=0, \quad \text{and} \quad w_i=\frac{1}{\ka_i}, \qquad  \text{if } a  \text{ is equal to a positive constant $\ka_i$ in } \left(\frac{i}{2^n},\frac{i+1}{2^n}\right).
\ee
From \eqref{Si:prime} and \eqref{beta}, it follows that:
	\be\label{beta:prime}
	(\beta_i)'=\begin{cases}
		\displaystyle\frac{1}{w_ia}-1, &\quad x\in[\frac{i}{2^n},\frac{i+1}{2^n}),\\
		 \displaystyle\frac{-1}{w_{i+1}a}+1, &\quad x\in[\frac{i+1}{2^n},\frac{i+2}{2^n}),\\
		0, &\quad \text{else},
	\end{cases}	
	\ee
	where  $i=0,1,\dots,2^n-2$.
	Clearly, if $a$ is equal to a positive constant $\ka_i$ in $(\frac{i}{2^n},\frac{i+1}{2^n})$, then \eqref{special:a:constant} yields
		\be\label{beta:prime:special}
\beta_i=-\frac{S_{i+1}}{w_{i+1}}, \qquad
(\beta_i)'=\begin{cases}
	\displaystyle\frac{-1}{w_{i+1}a}+1, &\quad x\in[\frac{i+1}{2^n},\frac{i+2}{2^n}),\\
	0, &\quad \text{else}.
\end{cases}
	\ee
	By the definition of $ \tilde{\mathcal{B}}_H$ in \eqref{Btildeh} and  the definition of $ \beta_i$ in \eqref{beta}, we identify that
		\be\label{beta:space}
	\beta_i\in \text{span}( \tilde{\mathcal{B}}_H), \quad \text{for } \quad i=0,1,\dots,2^n-2.
	\ee
Define $\ell_i$ as in \eqref{alpha:i} with $i=0,1,\dots,2^n-2$. It is clear that
	\be\label{alpha:prime}
	(\ell_i)'=\begin{cases}
		2^n, &\quad x\in[\frac{i}{2^n},\frac{i+1}{2^n}),\\
		-2^n, &\quad x\in[\frac{i+1}{2^n},\frac{i+2}{2^n}),\\
		0, &\quad \text{else}.
	\end{cases}	
	\ee
By \eqref{same:space:formula} in \cref{same:space}, definitions of $\mathcal{B}_H$  and $ \tilde{\mathcal{B}}_H$ in \eqref{Bh} and \eqref{Btildeh}, we observe
	\be\label{alpha:space}
\ell_i\in \text{span}(\mathcal{B}_H ) \subset \text{span}( \tilde{\mathcal{B}}_H), \quad \text{for } \quad i=0,1,\dots,2^n-2.
\ee
Define the error
\be\label{def:E}
E:=u-\tilde{u}_H.
\ee
Then,  \eqref{orthorgonal} implies
	\be\label{orthor}
\int_\Omega a(x)  E' (\tilde{v}_H)'dx=\int_\Omega a(x)  (u-\tilde{u}_H)' (\tilde{v}_H)'dx=0,\ \ \
\ \mbox{for all}\ \tilde{v}_H\in \text{span}( \tilde{\mathcal{B}}_H).
\ee
We deduce from \eqref{alpha:prime}, \eqref{alpha:space}, and \eqref{orthor} that
\be\label{seperate:L:infty:alpha}
\begin{split}
	0&=\frac{1}{2^n} \int_\Omega a(x)  E'(x) (\ell_i(x))'dx=\int_{\frac{i}{2^n}}^{\frac{i+1}{2^n}} a(x)E'(x) dx+\int_{\frac{i+1}{2^n}}^{\frac{i+2}{2^n}} a(x)E'(x) (-1) dx.
\end{split}
\ee
We claim that
	\be\label{seperate:L:infty:together}
\begin{split}
	0= \frac{1}{w_{i}}\int_{\frac{i}{2^n}}^{\frac{i+1}{2^n}} E' dx-\frac{1}{w_{i+1}}\int_{\frac{i+1}{2^n}}^{\frac{i+2}{2^n}} E' dx,
\end{split}
\ee
%
where each constant $w_i$ is defined in \eqref{wi}. We prove \eqref{seperate:L:infty:together} by following \eqref{seperate:L:infty:beta}--\eqref{seperate:L:infty:alpha:3} in the following 3 cases.

Case 1. If  $a$ is not equal to a positive constant $\ka_i$ in each $(\frac{i}{2^n},\frac{i+1}{2^n})$ with $i=0,1,\dots,2^n-2$.
We conclude from \eqref{beta:prime}, \eqref{beta:space}, and \eqref{orthor} that
	\be\label{seperate:L:infty:beta}
\begin{split}
	0&= \int_\Omega a  E' (\beta_i)'dx=\int_{\frac{i}{2^n}}^{\frac{i+1}{2^n}} a E' \left(\frac{1}{w_ia}-1\right) dx+\int_{\frac{i+1}{2^n}}^{\frac{i+2}{2^n}} a E' \left(\frac{-1}{w_{i+1}a}+1\right) dx,\\
	& =\frac{1}{w_i}\int_{\frac{i}{2^n}}^{\frac{i+1}{2^n}} E' dx-\frac{1}{w_{i+1}}\int_{\frac{i+1}{2^n}}^{\frac{i+2}{2^n}} E' dx-\int_{\frac{i}{2^n}}^{\frac{i+1}{2^n}} aE' dx+\int_{\frac{i+1}{2^n}}^{\frac{i+2}{2^n}} aE' dx.
\end{split}
\ee
Then \eqref{seperate:L:infty:beta} and \eqref{seperate:L:infty:alpha}  lead to \eqref{seperate:L:infty:together}.

Case 2. If $a=\ka_i$ for $x\in (\frac{i}{2^n},\frac{i+1}{2^n})$, then \eqref{beta:prime:special} implies
	\be\label{seperate:L:infty:beta:2}
\begin{split}
	0&= \int_\Omega a  E' (\beta_i)'dx=\int_{\frac{i+1}{2^n}}^{\frac{i+2}{2^n}} a E' \left(\frac{-1}{w_{i+1}a}+1\right) dx =-\frac{1}{w_{i+1}}\int_{\frac{i+1}{2^n}}^{\frac{i+2}{2^n}} E' dx+\int_{\frac{i+1}{2^n}}^{\frac{i+2}{2^n}} aE' dx.
\end{split}
\ee
Now,  \eqref{seperate:L:infty:alpha} indicates
\be\label{seperate:L:infty:alpha:2}
\begin{split}
	0&=\int_{\frac{i}{2^n}}^{\frac{i+1}{2^n}} aE' dx-\int_{\frac{i+1}{2^n}}^{\frac{i+2}{2^n}} aE'  dx=\ka_i\int_{\frac{i}{2^n}}^{\frac{i+1}{2^n}} E' dx-\int_{\frac{i+1}{2^n}}^{\frac{i+2}{2^n}} aE'  dx.
\end{split}
\ee
\eqref{seperate:L:infty:beta:2} and \eqref{seperate:L:infty:alpha:2} together with $\ka_i=\frac{1}{w_i}$ in \eqref{special:a:constant} leads to \eqref{seperate:L:infty:together}.

Case 3. If $a$ is a positive constant function $\ka$ for $x\in (0,1)$, then \eqref{wi} suggests
\[
w_{0}=w_{1}=\dots=w_{2^n-1}=\frac{1}{\ka}.
\]
By  \eqref{seperate:L:infty:alpha},
\be\label{seperate:L:infty:alpha:3}
\begin{split}
	 0=\ka\int_{\frac{i}{2^n}}^{\frac{i+1}{2^n}} E' dx-\ka \int_{\frac{i+1}{2^n}}^{\frac{i+2}{2^n}} E'  dx=\frac{1}{w_i}\int_{\frac{i}{2^n}}^{\frac{i+1}{2^n}} E' dx-\frac{1}{w_{i+1}} \int_{\frac{i+1}{2^n}}^{\frac{i+2}{2^n}} E'  dx.
\end{split}
\ee
Thus, \eqref{seperate:L:infty:together} is true for any constant $a>0$ on $\Omega$.

Since $E=u-\tilde{u}_H$ is continuous in $\Omega$, we have
\[
\int_{\frac{i}{2^n}}^{\frac{i+1}{2^n}} E' dx=E\left(\frac{i+1}{2^n}\right)-E\left(\frac{i}{2^n}\right), \quad \text{and} \quad \int_{\frac{i+1}{2^n}}^{\frac{i+2}{2^n}} E' dx=E\left(\frac{i+2}{2^n}\right)-E\left(\frac{i+1}{2^n}\right).
\]
Now, \eqref{seperate:L:infty:together} yields
\be\label{zero:expression}
-\frac{1}{w_{i}}E\left(\frac{i}{2^n}\right)
+\left(\frac{1}{w_{i}}+\frac{1}{w_{i+1}}
\right) E\left(\frac{i+1}{2^n}\right)
-\frac{1}{w_{i+1}} E\left(\frac{i+2}{2^n}\right)=0,
\ee
for  $i=0,1,\dots,2^n-2$.
We define 
\be\label{vector:E}
\vec{E}:=\left(E(0),E\left(\frac{1}{2^n}\right),E\left(\frac{2}{2^n}\right),
\dots,E\left(\frac{2^n-1}{2^n}\right),E(1)\right)^T.
\ee
By the boundary condition $u(0)=u(1)=0$, it is obvious that $E(0)=E(1)=0$. So \eqref{zero:expression} and \eqref{vector:E} indicate
 \be\label{AE:is:0}
 A\vec{E}=\vec{0},
 \ee
 where
	\[
A=	\begin{pmatrix}
	1 &0 &0 & 0  & 0 & 0 & \cdots  & 0 & 0 \\
	-\frac{1}{w_{0}}  &	 \frac{1}{w_{0}}+\frac{1}{w_{1}} & -\frac{1}{w_{1}} & 0  & 0 & 0 & \cdots  & 0& 0 \\
	0  &	-\frac{1}{w_{1}} & \frac{1}{w_{1}}+\frac{1}{w_{2}} & -\frac{1}{w_{2}}  & 0 & 0 & \cdots  & 0 & 0\\
	0  &	0 & -\frac{1}{w_{2}} & \frac{1}{w_{2}}+\frac{1}{w_{3}} & -\frac{1}{w_{3}}  & 0& \cdots  & 0  & 0\\
	0  &	0 & 0& -\frac{1}{w_{3}} & \frac{1}{w_{3}}+\frac{1}{w_{4}} & -\frac{1}{w_{4}}& \ddots  & 0   & 0\\
	\vdots  &	\vdots & 	\vdots & 	 \ddots & \ddots & \ddots& \ddots  & 0    & 0\\
	0  &    0&  0&\cdots & 0& -\frac{1}{w_{2^n-3}} & \frac{1}{w_{2^n-3}}+\frac{1}{w_{2^n-2}} & -\frac{1}{w_{2^n-2}}    & 0\\
	0  &	0 & 0 & 0 & \cdots & 0& -\frac{1}{w_{2^n-2}}  & \frac{1}{w_{2^n-2}}+\frac{1}{w_{2^n-1}}    &-\frac{1}{w_{2^n-1}} \\
	0 &0 &0 & 0  & 0 & \cdots & 0  & 0 & 1 \\
	\end{pmatrix}.
	\]
	Since the above $A$ is a  $(2^n+1) \times (2^n+1)$ M-matrix, we can say that $A$ is invertiable. Now \eqref{AE:is:0} leads to $\vec{E}=\vec{0}$. From \eqref{def:E} and \eqref{vector:E}, we observe that
	\be\label{interpolation:1:proof}
	 \tilde{u}_H\left(\frac{i}{2^n}\right)=u\left(\frac{i}{2^n}\right), \qquad \text{where }\quad i=0,1,\dots,2^n, \quad \text{and} \quad H=\frac{1}{2^n}.
	\ee
	Thus, \eqref{interpolation:1} is proved.
\end{proof}	
\section{Numerical experiments}
\label{sec:numerical}
Recall that $\Omega=(0,1)$, $H=\frac{1}{2^n}$ for every integer $n\ge 2$,  $u$ is the exact solution of the model problem \eqref{Model:Problem},  $\tilde{u}_H$ is the numerical solution of our proposed derivative-orthogonal wavelet multiscale method through solving \eqref{M:FEM} by using the basis $\tilde{\mathcal{B}}_H$ in \eqref{Btildeh}.
We shall measure errors in $l_2$ and $l_\infty$ norms on an extremely fine mesh $\frac{1}{N} \Z \cap \overline{\Omega}$ with very large integers $N$. That is,
we define
\be\label{fine:xi}
x_i=i/N, \qquad i=0,\dots, N, \qquad \text{for very large integers } N.
\ee
For all our numerical examples, we shall use either $N=2^{14}$ or $2^{15}$.
To verify the convergence rate,  we define the following $l_2$ and $l_\infty$ norms if  the exact solution $u$ is available
\be\label{Error:1}
\begin{split}
& \frac{\|\tilde{u}_H-u\|_2}{\|u\|_2}
:=\sqrt{\frac{\sum_{i=0}^N \left|\tilde{u}_H(x_i)-u(x_i)\right|^2}{\sum_{i=0}^N \left|u(x_i)\right|^2}},  \qquad \frac{\|\tilde{u}'_H-u'\|_2}{\|u'\|_2}
:=\sqrt{\frac{\sum_{i=0}^N \left|\tilde{u}'_H(x_i)-u'(x_i)\right|^2}{\sum_{i=0}^N \left|u'(x_i)\right|^2}},\\
& \frac{\|a\tilde{u}'_H-au'\|_2}{\|au'\|_2}
:=\sqrt{\frac{\sum_{i=0}^N \left|a(x_i)\tilde{u}'_H(x_i)-a(x_i)u'(x_i)\right|^2}{\sum_{i=0}^N \left|a(x_i)u'(x_i)\right|^2}},
\end{split}
\ee
for measuring the errors in $l_2$ norms,
and
\be\label{Error:2}
\begin{split}
& \|\tilde{u}_H-u\|_\infty
:=\max_{0\le i\le N} \left|\tilde{u}_H(x_i)-u(x_i)\right|, \qquad \|\tilde{u}'_H-u'\|_\infty
:=\max_{0\le i\le N} \left|\tilde{u}_H'(x_i)-u'(x_i)\right|,\\
& \|a\tilde{u}'_H-au'\|_\infty
:=\max_{0\le i\le N} \left|a(x_i)\tilde{u}_H'(x_i)-a(x_i)u'(x_i)\right|,
\end{split}
\ee
for measuring the errors in $l_\infty$ norms.

If the exact solution $u$ is not available, we  replace $u$ by $\tilde{u}_{H/2}$ in
\eqref{Error:1}--\eqref{Error:2} to define following  $l_2$ and $l_\infty$ norms
\[
\begin{split}
& \frac{\|\tilde{u}_H-\tilde{u}_{H/2}\|_2}{\|\tilde{u}_{H/2}\|_2},  \qquad \frac{\|\tilde{u}'_H-\tilde{u}'_{H/2}\|_2}{\|\tilde{u}'_{H/2}\|_2}, \qquad \frac{\|a\tilde{u}'_H-a\tilde{u}'_{H/2}\|_2}{\|a\tilde{u}'_{H/2}\|_2}, \qquad l_2 \text{ norms},\\
& \|\tilde{u}_H-\tilde{u}_{H/2}\|_\infty, \qquad \|\tilde{u}'_H-\tilde{u}'_{H/2}\|_\infty, \qquad \|a\tilde{u}'_H-a\tilde{u}'_{H/2}\|_\infty, \qquad l_\infty \text{ norms}.
\end{split}
\]

To highlight benefits of our proposed  multiscale method \eqref{M:FEM}, in \cref{Special:ex1,Special:ex2,Special:ex2:infty}, we shall compare the $l_2$ and $l_\infty$ norms of errors of the above
numerical solution $\tilde{u}_H$ with these of the linear finite element solution $u_{H/2}\in 	 \text{span}\{\ell_i : i=0,\dots,2^{n+1}-2\}$ satisfying
\begin{equation}\label{uHhalf}
\int_{\Omega} a\nabla u_{H/2} \cdot \nabla v=\int_{\Omega} f v,\quad \mbox{for all } v\in \{\ell_i : i=0,\dots,2^{n+1}-2\},
\end{equation}
where $\{\ell_i : i=0,\dots,2^{n+1}-2\}$ is obtained by replacing $n$ with $n+1$ in \eqref{alpha:i}.
Precisely, we present the errors in \cref{table3:example1} for  \cref{Special:ex1},  \cref{table3:example2} for  \cref{Special:ex2}, and \cref{table3:example2:infty} for  \cref{Special:ex2:infty} by replacing
the above numerical solution $\tilde{u}_H$ with the solution $u_{H/2}$ in \eqref{Error:1} and \eqref{Error:2}.
By \eqref{same:space:formula}, $\text{span}\{\ell_i : i=0,1,\dots,2^{n+1}-2\}=\text{span}(\mathcal{B}_{H/2})$.
Due to $H=2^{-n}$ and the refinability structure of $\mathcal{B}_{H}$, we observe (\cite{Hanbook,HanMichelle2019}) that $\mathcal{B}_{H/2} \subset \tilde{\mathcal{B}}_{H/2}$, while the three bases $\{\ell_i : i=0,1,\dots,2^{n+1}-2\}$, $\mathcal{B}_{H/2}$ and $\tilde{\mathcal{B}}_H$ have the same cardinality $2^{n+1}-1$.
%
%
%
%
%
In \cref{table1:example1}--\cref{table2:example6} of \cref{Special:ex1}--\cref{Special:ex6},  $\kappa$ is the condition number of the  stiffness matrix of our proposed derivative-orthogonal wavelet multiscale method or the standard finite element method (FEM). We test 6 numerical examples with $\Omega=(0,1)$ in this section.

	%
	%
%
\subsection{The exact solution $u$ is available}
\begin{example}\label{Special:ex1}
	\normalfont
	Consider $f(x)=1000x$ and
	a continuous diffusion coefficient $a(x)=\frac{1}{1.05+\sin(2^9\pi x)}$ having the high-frequency oscillation.
	Then the exact solution $u$ of \eqref{Model:Problem} can be expressed as
	\[
	u(x)=1000\left( - \frac{1.05x^3}{6}+ \frac{\ca_1\cos(2^9\pi x)}{2^9\pi} + \frac{\cos(2^9\pi x)}{2^{10}\pi}x^2 -\frac{\cos(2^9\pi x)}{(2^9\pi)^3}- \frac{\sin(2^9\pi x)}{2^{9}\pi}x -1.05\ca_1 x+\ca_2\right),
	\]
	where
	\[
	\begin{split}
&\ca_1 = \frac{1.05(2^{9}\pi)^3 - 3\cos(2^{9}\pi)(2^{9}\pi)^2 + 6\sin(2^{9}\pi)(2^{9}\pi) + 6\cos(2^{9}\pi) - 6}{6(2^{9}\pi)^2(-1.05(2^{9}\pi) + \cos(2^{9}\pi) - 1)},\\
&\ca_2 = \frac{-1.05(2^{9}\pi)^2 + 3\cos(2^{9}\pi)(2^{9}\pi) - 6\sin(2^{9}\pi) - 6.3} {6(2^{9}\pi)^2(-1.05(2^{9}\pi) + \cos(2^{9}\pi) - 1)}.
	\end{split}
	\]
The results on numerical solutions $\tilde{u}_H$ and $u_{H/2}$ are presented in \cref{table1:example1,table2:example1,table3:example1,fig:exam1}.
\end{example}

\begin{table}[htbp]
	\caption{Performance of the $l_2$ norm of the errors in \cref{Special:ex1} of our proposed derivative-orthogonal wavelet multiscale method  on the uniform Cartesian mesh $H$. Note that $N=2^{14}$ in \eqref{fine:xi} for estimating the $l_2$ norm. }
	\centering
		\renewcommand\arraystretch{0.7}
	\setlength{\tabcolsep}{3mm}{
		 \begin{tabular}{c|c|c|c|c|c|c|c|c}
			\hline
			$H$
			&   $\frac{\|\tilde{u}_H-u\|_2}{\|u\|_2}$
			& order &  $\frac{\|\tilde{u}'_H-u'\|_2}{\|u'\|_2}$ & order &   $\frac{\|a\tilde{u}'_H-au'\|_2}{\|au'\|_2}$
			&order & $\kappa$ & $\frac{a_{\max}}{a_{\min}}$ \\
			\hline	
$1/2$   &2.7782E-01   &   &5.4501E-01   &   &5.4490E-01   &   &11.64   &41\\
$1/2^2$   &7.1084E-02   &1.97   &2.7783E-01   &0.97   &2.7779E-01   &0.97   &11.64   &41\\
$1/2^3$   &1.7870E-02   &1.99   &1.3955E-01   &0.99   &1.3957E-01   &0.99   &11.64   &41\\
$1/2^4$   &4.4716E-03   &2.00   &6.9793E-02   &1.00   &6.9889E-02   &1.00   &11.64   &41\\
$1/2^5$   &1.1162E-03   &2.00   &3.4779E-02   &1.00   &3.5001E-02   &1.00   &11.64   &41\\
$1/2^6$   &2.7680E-04   &2.01   &1.7133E-02   &1.02   &1.7593E-02   &0.99   &11.64   &41\\			 
			\hline
	\end{tabular}}
	\label{table1:example1}
\end{table}	

\begin{table}[htbp]
	\caption{Performance of the $l_\infty$ norm of the error in \cref{Special:ex1} of our proposed derivative-orthogonal wavelet multiscale method on the uniform Cartesian mesh $H$. Note that $N=2^{14}$ in \eqref{fine:xi} for estimating the $l_\infty$ norm.  }
	\centering
		\renewcommand\arraystretch{0.7}
	\setlength{\tabcolsep}{2mm}{
		 \begin{tabular}{c|c|c|c|c|c|c|c|c}
			\hline
			$H$
			&   $\|\tilde{u}_H-u\|_\infty$
			& order &  $\|\tilde{u}'_H-u'\|_\infty$ & order &   $\|a\tilde{u}'_H-au'\|_\infty$ & order  & $\kappa$ & $\frac{a_{\max}}{a_{\min}}$ \\
			\hline	
$1/2$   &2.4665E+01   &   &4.2219E+02   &   &2.0878E+02   &   &11.64   &41\\
$1/2^2$   &7.1769E+00   &1.78   &2.3019E+02   &0.88   &1.1510E+02   &0.86   &11.64   &41\\
$1/2^3$   &1.9214E+00   &1.90   &1.1817E+02   &0.96   &6.0451E+01   &0.93   &11.64   &41\\
$1/2^4$   &4.9642E-01   &1.95   &5.8158E+01   &1.02   &3.1656E+01   &0.93   &11.64   &41\\
$1/2^5$   &1.2618E-01   &1.98   &2.8029E+01   &1.05   &1.6988E+01   &0.90   &11.64   &41\\
$1/2^6$   &3.1864E-02   &1.99   &1.2546E+01   &1.16   &9.4095E+00   &0.85   &11.64   &41\\		 
			\hline
	\end{tabular}}
	\label{table2:example1}
\end{table}	

\begin{table}[htbp]
	\caption{Performance of $l_2$ and $l_\infty$ norms of the error in \cref{Special:ex1} of the standard second-order FEM  on the uniform Cartesian mesh $H$. Note that $N=2^{14}$ in \eqref{fine:xi} for  estimating $l_2$ and $l_\infty$ norms.  }
	\centering
		\renewcommand\arraystretch{1}
	\setlength{\tabcolsep}{0.1mm}{
		 \begin{tabular}{c|c|c|c|c|c|c|c|c}
			\hline
			$H$ 	&   $\frac{\|u_{H/2}-u\|_2}{\|u\|_2}$
			&   $\frac{\|u'_{H/2}-u'\|_2}{\|u'\|_2}$ &   $\frac{\|au'_{H/2}-au'\|_2}{\|au'\|_2}$
			&   $\|u_{H/2}-u\|_\infty$
			&   $\|u'_{H/2}-u'\|_\infty$ &   $\|au'_{H/2}-au'\|_\infty$   & $\kappa$ & $\frac{a_{\max}}{a_{\min}}$ \\
			\hline	
$1/2$   &7.1E-01   &8.2E-01   &1.5E+00   &4.8E+01   &6.1E+02   &1.3E+03   &5.8E+00   &41\\
$1/2^2$   &7.0E-01   &8.1E-01   &1.5E+00   &4.7E+01   &5.9E+02   &1.5E+03   &2.5E+01   &41\\
$1/2^3$   &7.0E-01   &8.0E-01   &1.5E+00   &4.7E+01   &5.8E+02   &1.7E+03   &1.0E+02   &41\\
$1/2^4$   &6.9E-01   &8.0E-01   &1.5E+00   &4.7E+01   &5.8E+02   &1.7E+03   &4.1E+02   &41\\
$1/2^5$   &6.9E-01   &8.0E-01   &1.5E+00   &4.7E+01   &5.7E+02   &1.8E+03   &1.7E+03   &41\\
$1/2^6$   &6.9E-01   &8.0E-01   &1.5E+00   &4.7E+01   &5.7E+02   &1.8E+03   &6.6E+03   &41\\
			\hline
	\end{tabular}}
	\label{table3:example1}
\end{table}	

\begin{figure}[htbp]
	\centering
	\begin{subfigure}[b]{0.23\textwidth}
	 \includegraphics[width=4.5cm,height=3.cm]{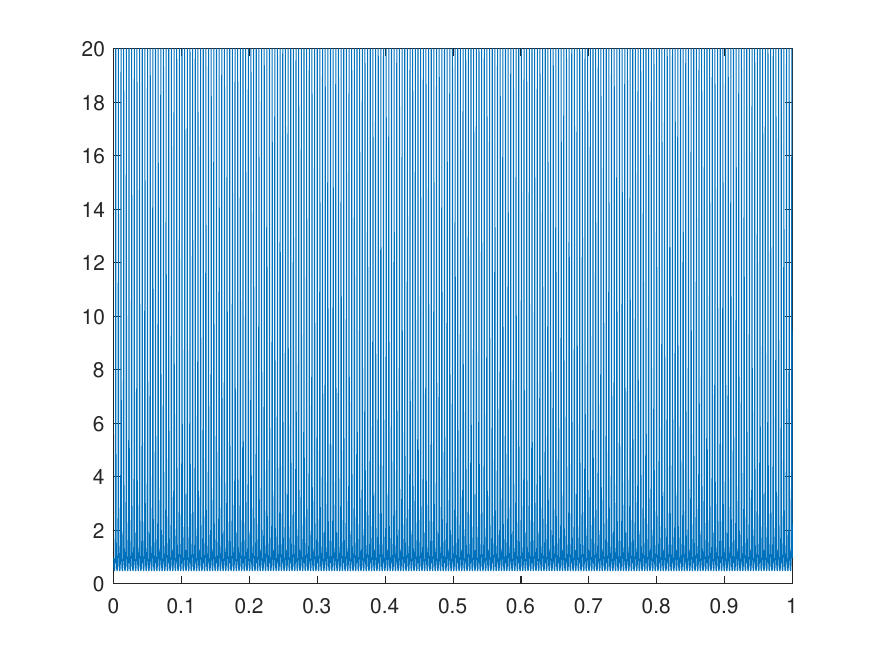}
\end{subfigure}	
	\begin{subfigure}[b]{0.23\textwidth}
		 \includegraphics[width=4.5cm,height=3.cm]{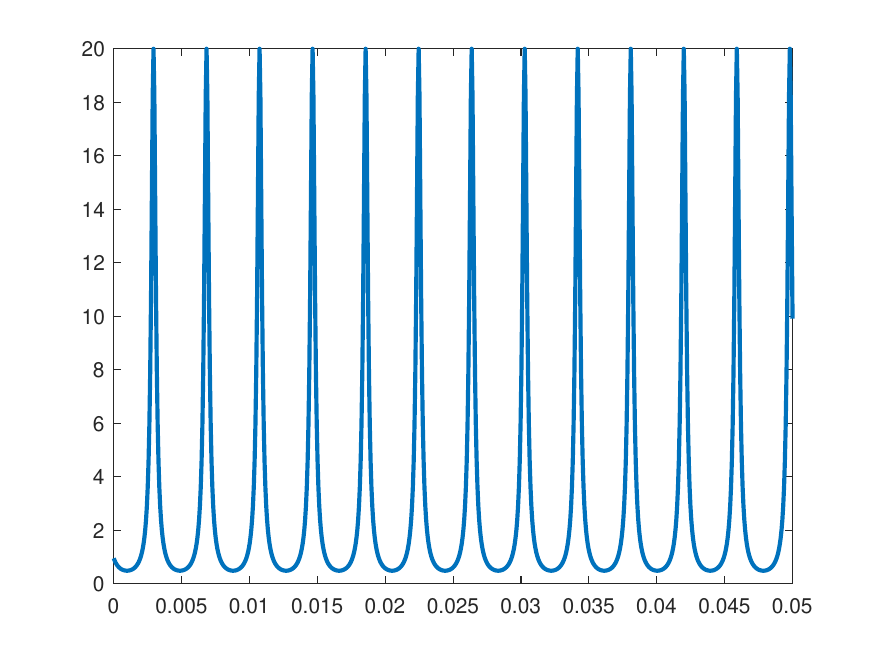}
	\end{subfigure}
	\begin{subfigure}[b]{0.23\textwidth}
		 \includegraphics[width=4.5cm,height=3.cm]{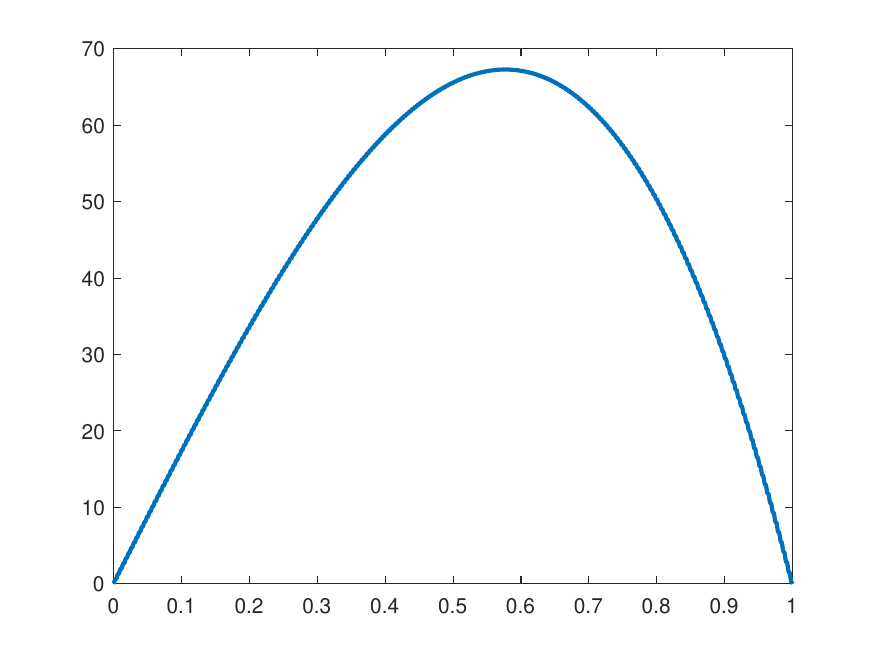}
	\end{subfigure}
	\begin{subfigure}[b]{0.23\textwidth}
		 \includegraphics[width=4.5cm,height=3.cm]{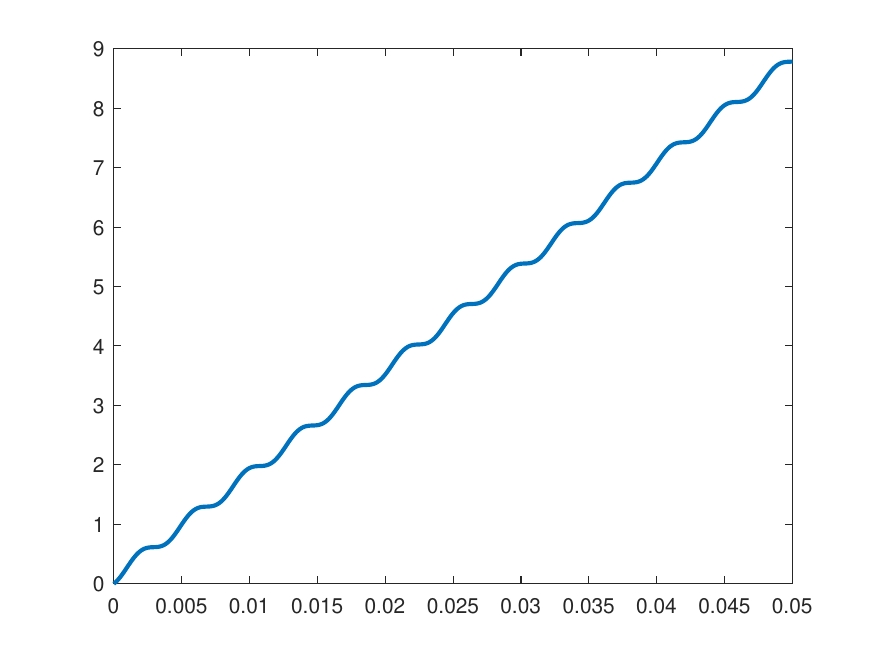}
	\end{subfigure}
	\begin{subfigure}[b]{0.23\textwidth}
		 \includegraphics[width=4.5cm,height=3.cm]{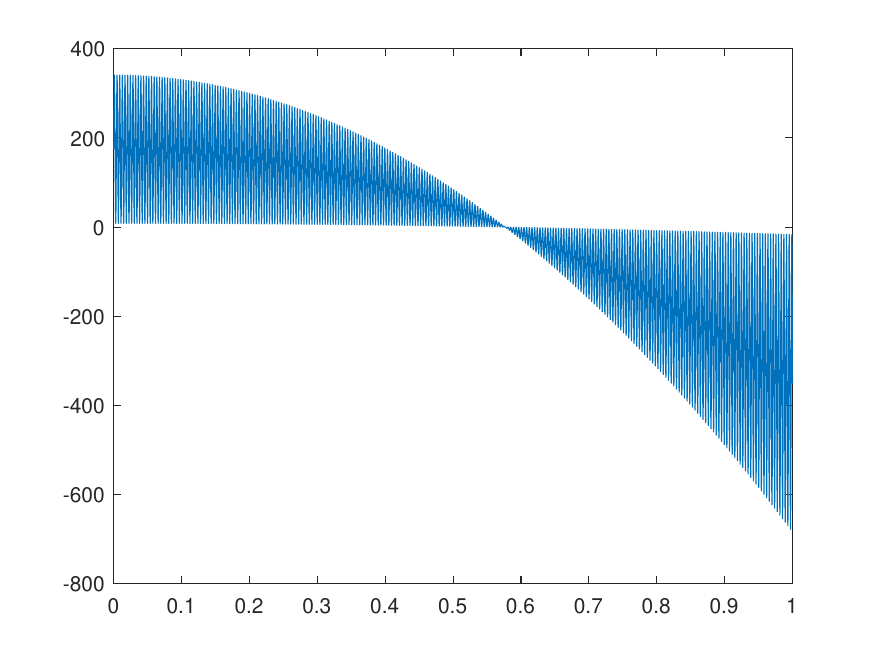}
	\end{subfigure}
	\begin{subfigure}[b]{0.23\textwidth}
		 \includegraphics[width=4.5cm,height=3.cm]{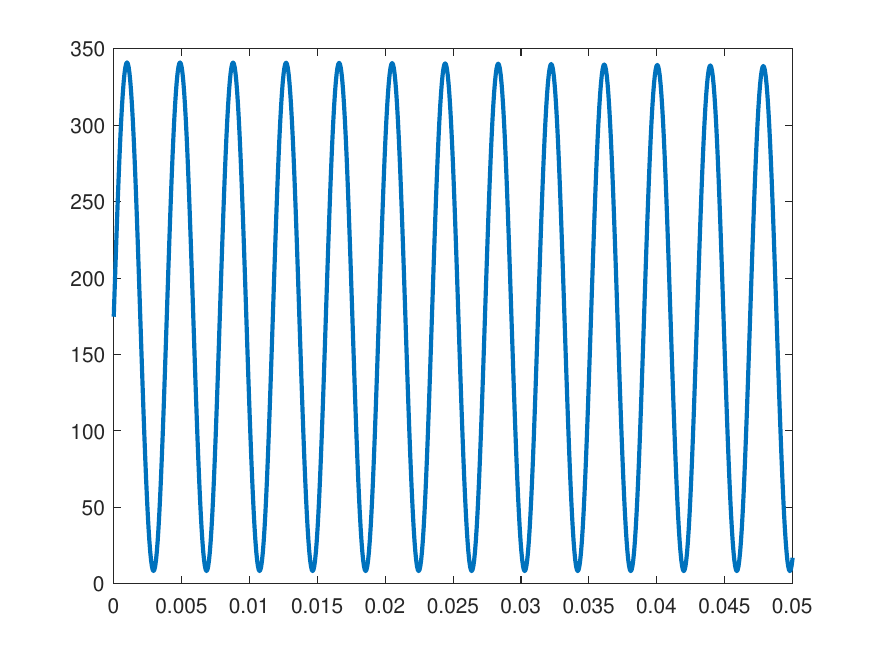}
	\end{subfigure}
	\begin{subfigure}[b]{0.23\textwidth}
		 \includegraphics[width=4.5cm,height=3.cm]{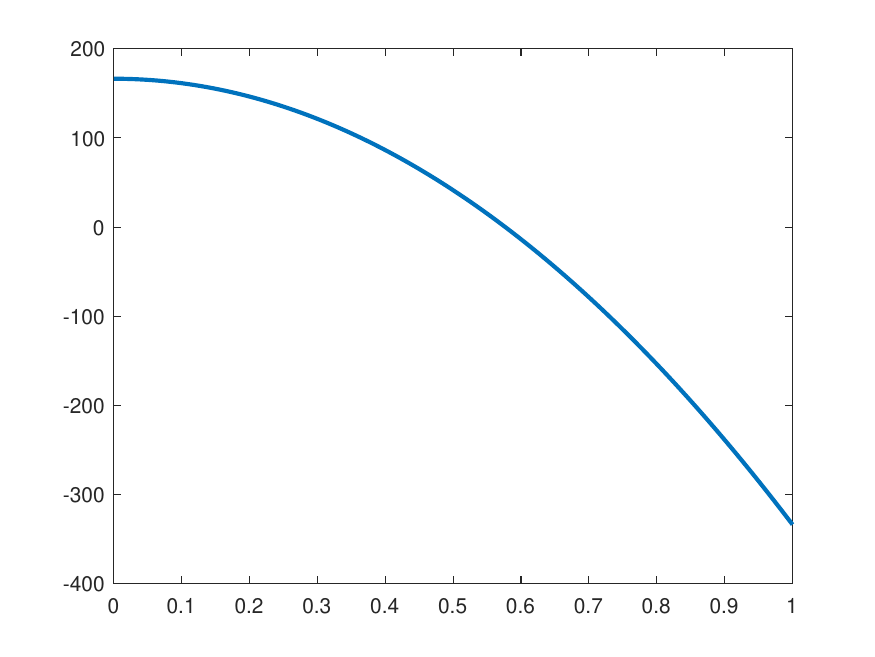}
	\end{subfigure}
	\begin{subfigure}[b]{0.23\textwidth}
		 \includegraphics[width=4.5cm,height=3.5cm]{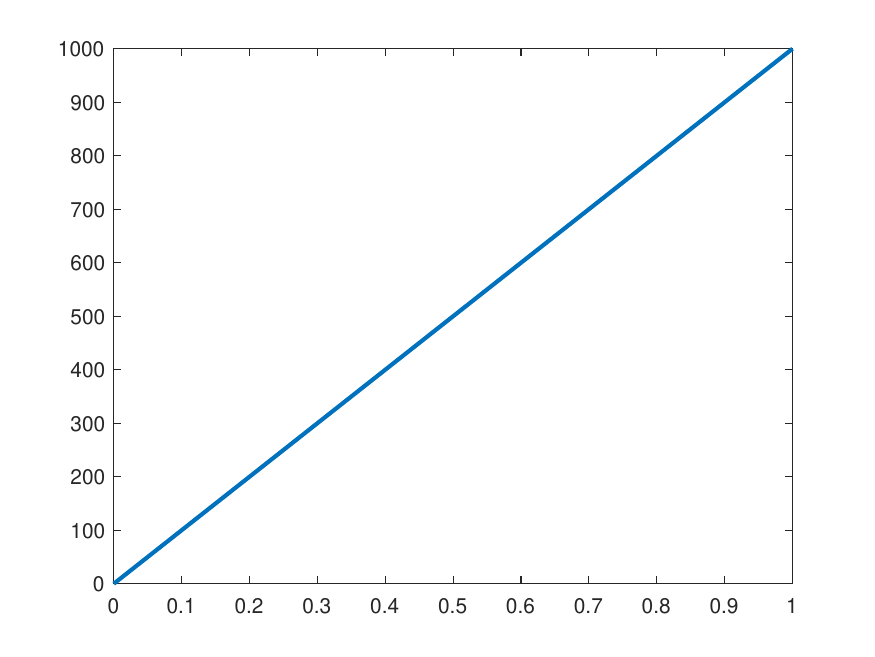}
	\end{subfigure}	
	\begin{subfigure}[b]{0.3\textwidth}
		 \includegraphics[width=6cm,height=3cm]{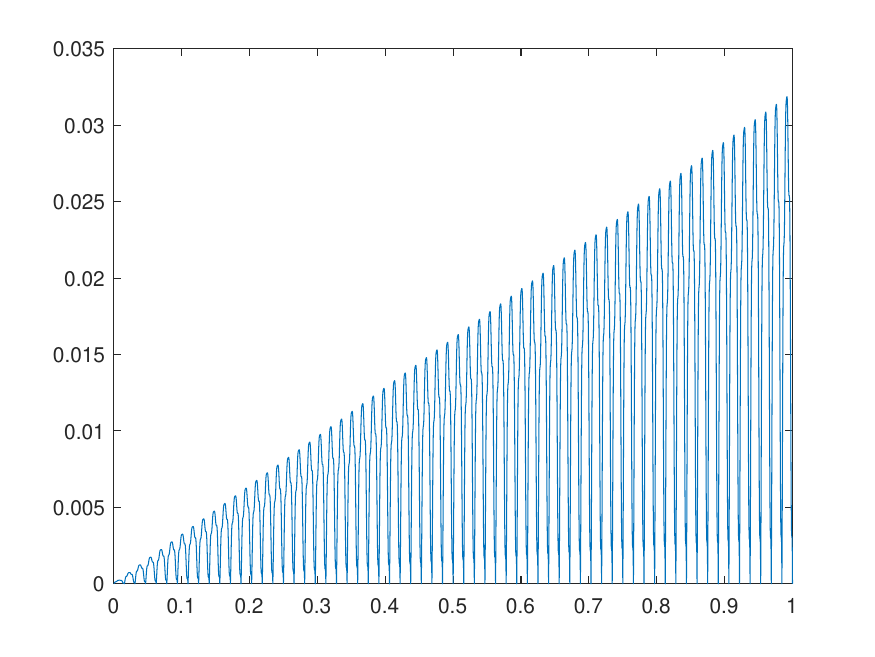}
	\end{subfigure}
	\begin{subfigure}[b]{0.3\textwidth}
		 \includegraphics[width=6cm,height=3cm]{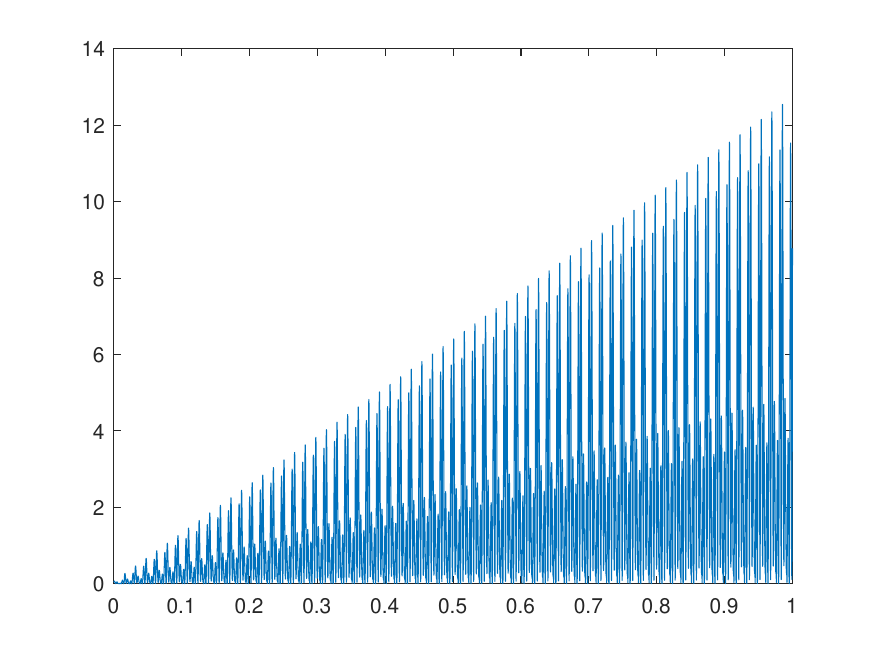}
	\end{subfigure}
	\begin{subfigure}[b]{0.3\textwidth}
		 \includegraphics[width=6cm,height=3cm]{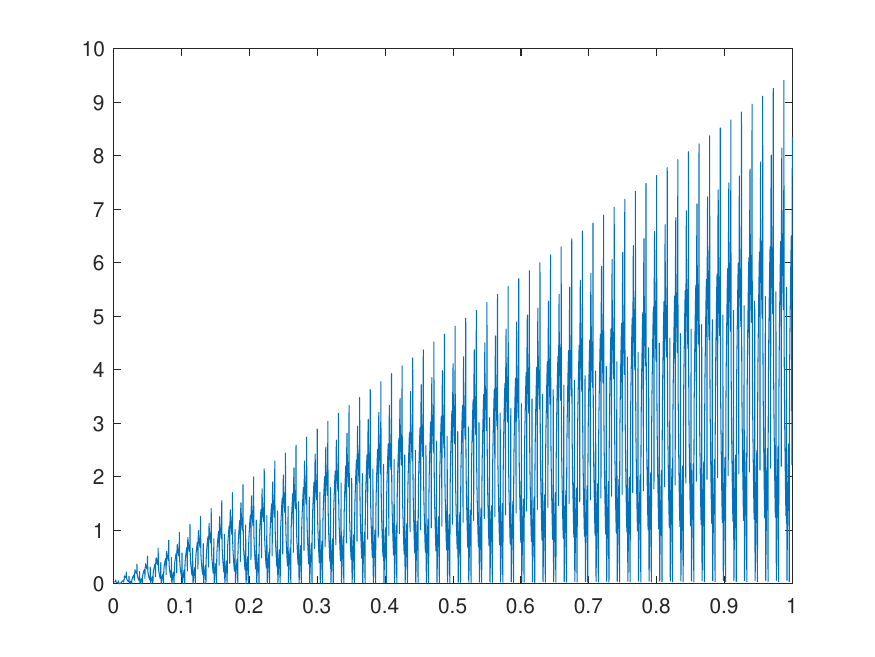}
	\end{subfigure}
	\caption{\cref{Special:ex1}: The top row: $a$ in [0,1] (first panel), $a$ in [0,0.05] (second panel), $u$ in [0,1] (third panel), and $u$ in [0,0.05] (fourth panel). The middle row: $u'$ in [0,1] (first panel), $u'$ in [0,0.05] (second panel), $au'$ in [0,1] (third panel), and $f$ in [0,1] (fourth panel). The bottom row: $|\tilde{u}_H-u|$ (first panel), $|\tilde{u}'_H-u'|$ (second panel), and  $|a\tilde{u}'_H-au'|$ (third panel) in [0,1] with $H=\frac{1}{2^6}$ at all $x_i=i/N$ for $i=0,\dots, N$ with $N=2^{14}$ in \eqref{fine:xi}.}
	\label{fig:exam1}
\end{figure}	

\begin{example}\label{Special:ex2}
	\normalfont
	Consider $f(x)=x$ and a diffusion discontinuous coefficient with high-contract jumps below
	\[ a(x)=10^4\chi_{[\frac{i-1}{2^8},\frac{i}{2^8})}
	+10^{-4}\chi_{[\frac{i}{2^8}, \frac{i+1}{2^8})}, \quad \text{with} \quad i=1,3,5,\dots,2^8-1.
	\]
Then the exact solution $u$ of \eqref{Model:Problem} can be expressed as
	\[
	 u(x)=-\frac{1}{6}\ca_{i,3}x^3+\ca_{i,1}x+\ca_{i,0} \quad \text{for } x\in \left[\frac{i-1}{2^8}, \frac{i}{2^8}\right] \text{ with } i=1,2,3,\dots,2^8,
	\]
	where
	\[
	\ca_{i,3}=\begin{cases}
		10^{-4}, &\text{ if } i \text{ is odd},\\
		10^4, &\text{ if } i \text{ is even},
	\end{cases}
	\]
	 and  the constants $\ca_{i,1},\ca_{i,0}$  can be uniquely determined by imposing the continuity of $u$ and $au'$  at $x=\frac{i-1}{2^8}$ for $i=2,3,4,\dots,2^8$, along with the boundary condition $u(0)=u(1)=0$.
The results on numerical solutions $\tilde{u}_H$ and $u_{H/2}$ are presented in \cref{table1:example2,table2:example2,table3:example2,fig:exam2}.
\end{example}

\begin{table}[htbp]
	\caption{Performance of the $l_2$ norm of the error in \cref{Special:ex2} of our proposed derivative-orthogonal wavelet multiscale method  on the uniform Cartesian mesh $H$. Note that $N=2^{14}$ in \eqref{fine:xi} for estimating the $l_2$ norm. }
	\centering
		\renewcommand\arraystretch{0.7}
	\setlength{\tabcolsep}{1mm}{
		 \begin{tabular}{c|c|c|c|c|c|c|c|c}
			\hline
			$H$
			&   $\frac{\|\tilde{u}_H-u\|_2}{\|u\|_2}$
			& order &  $\frac{\|\tilde{u}'_H-u'\|_2}{\|u'\|_2}$ & order &   $\frac{\|a\tilde{u}'_H-au'\|_2}{\|au'\|_2}$
			&order & $\kappa$ & $\frac{a_{\max}}{a_{\min}}$ \\
			\hline	
$1/2$   &2.7754E-01   &   &5.4457E-01   &   &5.4483E-01   &   &1.00E+08   &1.00E+08\\
$1/2^2$   &7.0993E-02   &1.97   &2.7751E-01   &0.97   &2.7767E-01   &0.97   &1.00E+08   &1.00E+08\\
$1/2^3$   &1.7837E-02   &1.99   &1.3926E-01   &0.99   &1.3934E-01   &0.99   &1.00E+08   &1.00E+08\\
$1/2^4$   &4.4544E-03   &2.00   &6.9403E-02   &1.00   &6.9445E-02   &1.00   &1.00E+08   &1.00E+08\\
$1/2^5$   &1.1021E-03   &2.01   &3.4087E-02   &1.03   &3.4109E-02   &1.03   &1.00E+08   &1.00E+08\\
$1/2^6$   &2.6042E-04   &2.08   &1.5737E-02   &1.12   &1.5747E-02   &1.12   &1.00E+08   &1.00E+08\\	
			\hline
	\end{tabular}}
	\label{table1:example2}
\end{table}	

\begin{table}[htbp]
	\caption{Performance of the $l_\infty$ norm of the error in \cref{Special:ex2} of our proposed derivative-orthogonal wavelet multiscale method  on the uniform Cartesian mesh $H$. Note that $N=2^{14}$ in \eqref{fine:xi} for estimating the $l_\infty$ norm.}
	\centering
		\renewcommand\arraystretch{0.7}
	\setlength{\tabcolsep}{1mm}{
		 \begin{tabular}{c|c|c|c|c|c|c|c|c}
			\hline
			$H$
			&   $\|\tilde{u}_H-u\|_\infty$
			& order &  $\|\tilde{u}'_H-u'\|_\infty$ & order &   $\|a\tilde{u}'_H-au'\|_\infty$ & order  & $\kappa$ & $\frac{a_{\max}}{a_{\min}}$ \\
			\hline	
	$1/2$   &1.1784E+02   &   &2.0687E+03   &   &2.0687E-01   &   &1.00E+08   &1.00E+08\\
	$1/2^2$   &3.4260E+01   &1.78   &1.1287E+03   &0.87   &1.1287E-01   &0.87   &1.00E+08   &1.00E+08\\
	$1/2^3$   &9.1744E+00   &1.90   &5.8065E+02   &0.96   &5.8065E-02   &0.96   &1.00E+08   &1.00E+08\\
	$1/2^4$   &2.3699E+00   &1.95   &2.8707E+02   &1.02   &2.8707E-02   &1.02   &1.00E+08   &1.00E+08\\
	$1/2^5$   &6.0201E-01   &1.98   &1.3540E+02   &1.08   &1.3540E-02   &1.08   &1.00E+08   &1.00E+08\\
	$1/2^6$   &1.5169E-01   &1.99   &5.8340E+01   &1.21   &5.8340E-03   &1.21   &1.00E+08   &1.00E+08\\
			\hline
	\end{tabular}}
	\label{table2:example2}
\end{table}	

\begin{table}[htbp]
	\caption{Performance of $l_2$ and $l_\infty$ norms of the error in \cref{Special:ex2} of the standard second-order FEM  on the uniform Cartesian mesh $H$. Note that $N=2^{14}$ in \eqref{fine:xi} for estimating $l_2$ and $l_\infty$ norms.  }
	\centering
		\renewcommand\arraystretch{0.7}
	\setlength{\tabcolsep}{0.1mm}{
		 \begin{tabular}{c|c|c|c|c|c|c|c|c}
			\hline
			$H$ 	&   $\frac{\|u_{H/2}-u\|_2}{\|u\|_2}$
			&   $\frac{\|u'_{H/2}-u'\|_2}{\|u'\|_2}$ &   $\frac{\|au'_{H/2}-au'\|_2}{\|au'\|_2}$
			&   $\|u_{H/2}-u\|_\infty$
			&   $\|u'_{H/2}-u'\|_\infty$ &   $\|au'_{H/2}-au'\|_\infty$   & $\kappa$ & $\frac{a_{\max}}{a_{\min}}$ \\
			\hline	
			$1/2$   &1.0E+00   &1.0E+00   &1.0E+00   &3.2E+02   &3.3E+03   &3.3E-01   &5.8E+00   &1.0E+08\\
			$1/2^2$   &1.0E+00   &1.0E+00   &1.0E+00   &3.2E+02   &3.3E+03   &3.3E-01   &2.5E+01   &1.0E+08\\
			$1/2^3$   &1.0E+00   &1.0E+00   &1.0E+00   &3.2E+02   &3.3E+03   &3.3E-01   &1.0E+02   &1.0E+08\\
			$1/2^4$   &1.0E+00   &1.0E+00   &1.0E+00   &3.2E+02   &3.3E+03   &3.3E-01   &4.1E+02   &1.0E+08\\
			$1/2^5$   &1.0E+00   &1.0E+00   &1.0E+00   &3.2E+02   &3.3E+03   &3.3E-01   &1.7E+03   &1.0E+08\\
			$1/2^6$   &1.0E+00   &1.0E+00   &1.0E+00   &3.2E+02   &3.3E+03   &3.3E-01   &6.6E+03   &1.0E+08\\
			\hline
	\end{tabular}}
	\label{table3:example2}
\end{table}

\begin{figure}[htbp]
	\centering
	\begin{subfigure}[b]{0.23\textwidth}
		 \includegraphics[width=4.5cm,height=3.cm]{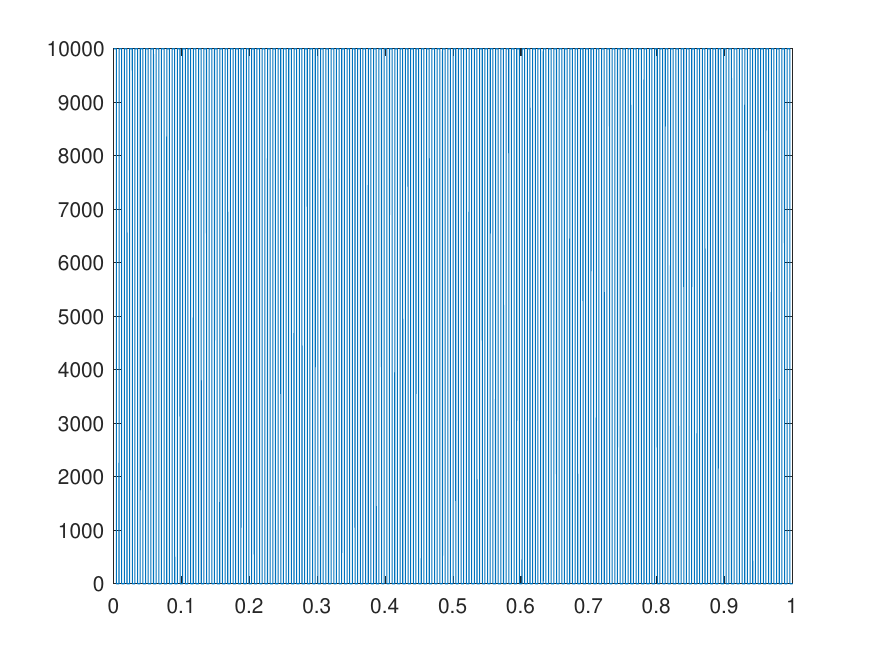}
	\end{subfigure}	
	\begin{subfigure}[b]{0.23\textwidth}
		 \includegraphics[width=4.5cm,height=3.cm]{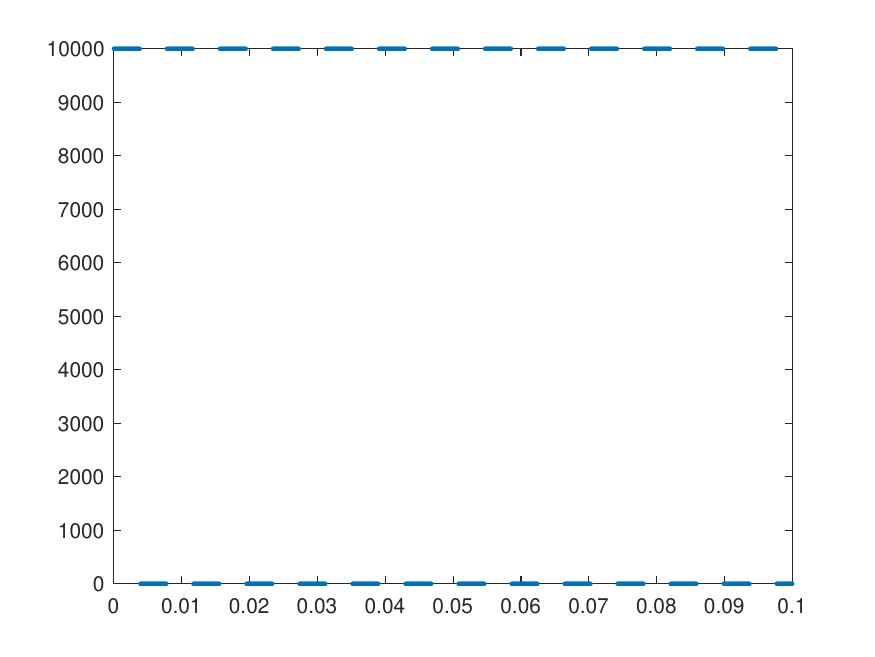}
	\end{subfigure}
	\begin{subfigure}[b]{0.23\textwidth}
		 \includegraphics[width=4.5cm,height=3.cm]{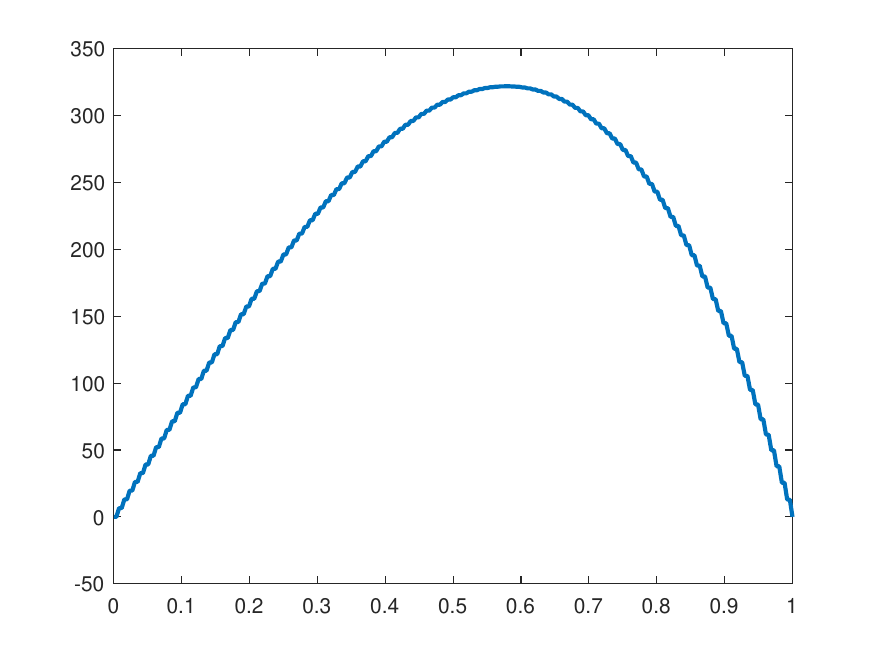}
	\end{subfigure}
	\begin{subfigure}[b]{0.23\textwidth}
		 \includegraphics[width=4.5cm,height=3.cm]{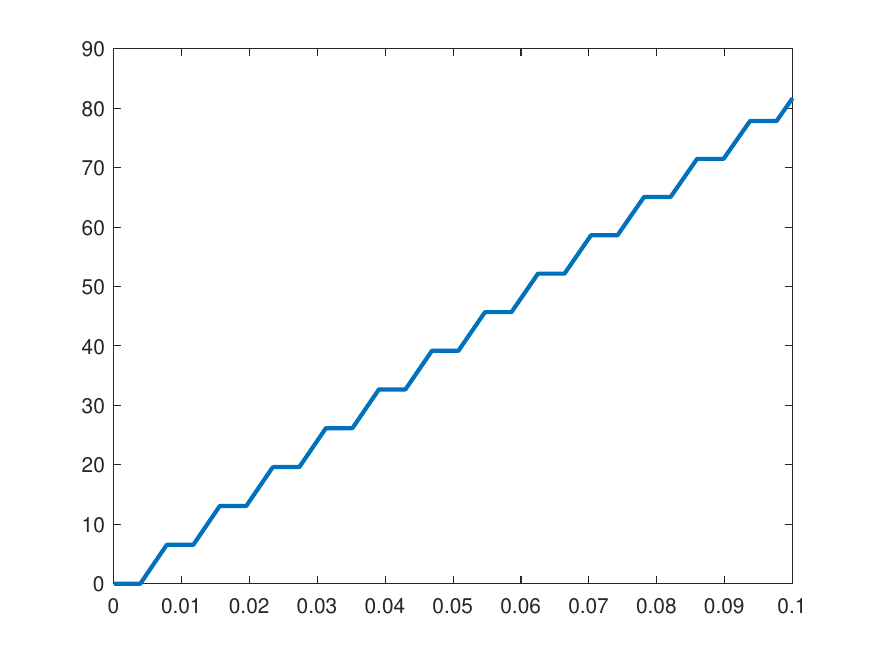}
	\end{subfigure}
	\begin{subfigure}[b]{0.23\textwidth}
		 \includegraphics[width=4.5cm,height=3.cm]{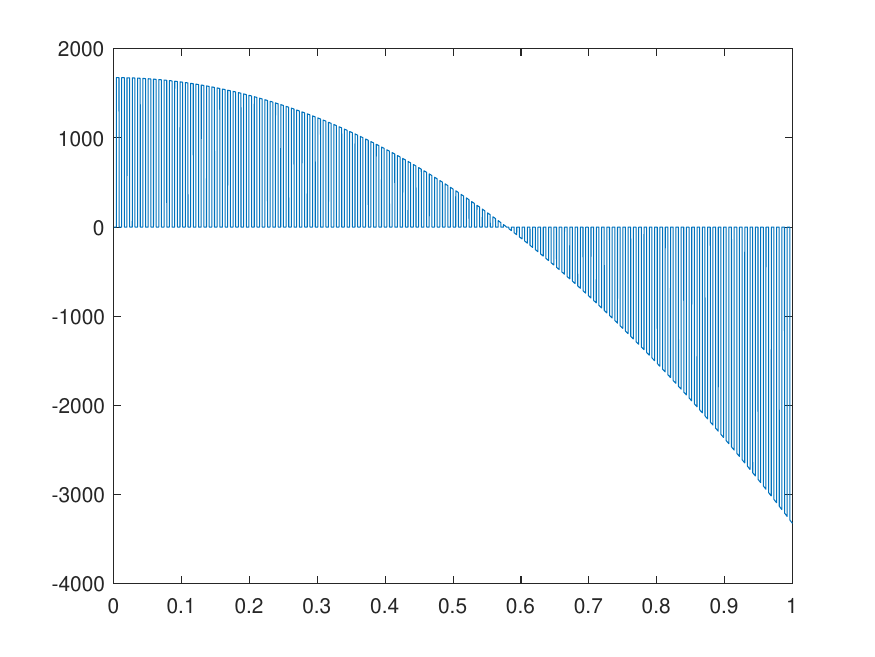}
	\end{subfigure}
	\begin{subfigure}[b]{0.23\textwidth}
		 \includegraphics[width=4.5cm,height=3.cm]{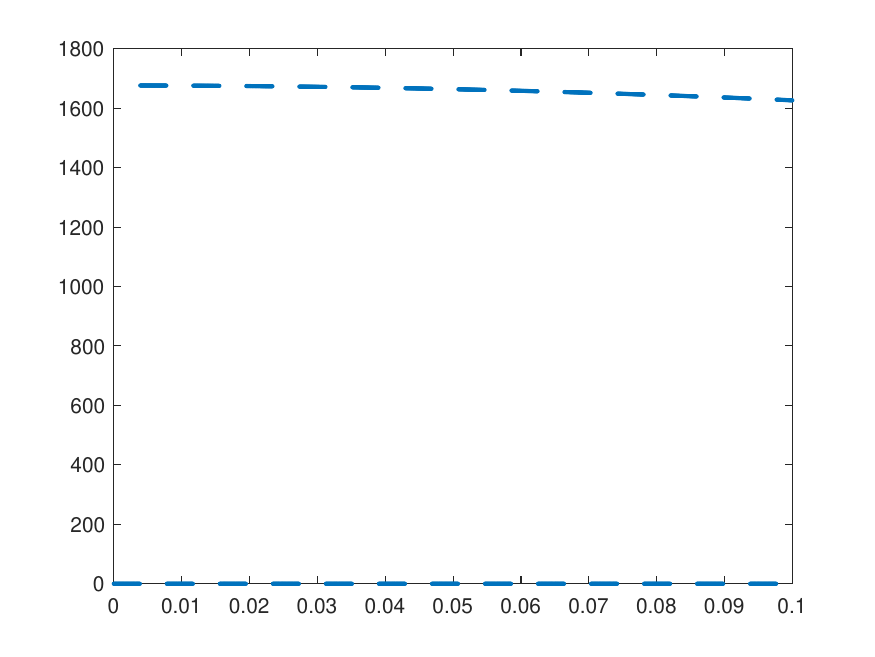}
	\end{subfigure}
	\begin{subfigure}[b]{0.23\textwidth}
		 \includegraphics[width=4.5cm,height=3.cm]{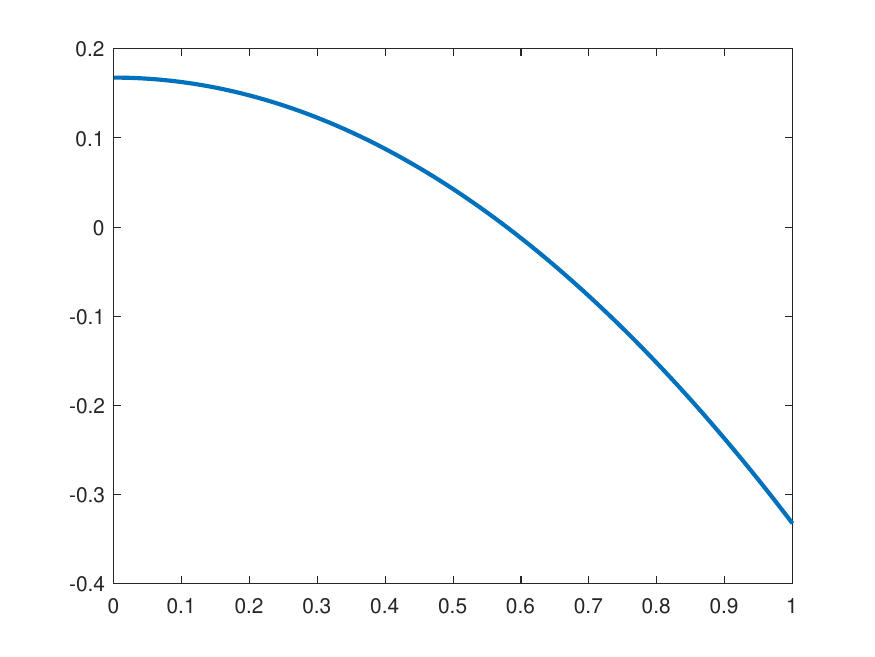}
	\end{subfigure}
	\begin{subfigure}[b]{0.23\textwidth}
		 \includegraphics[width=4.5cm,height=3.cm]{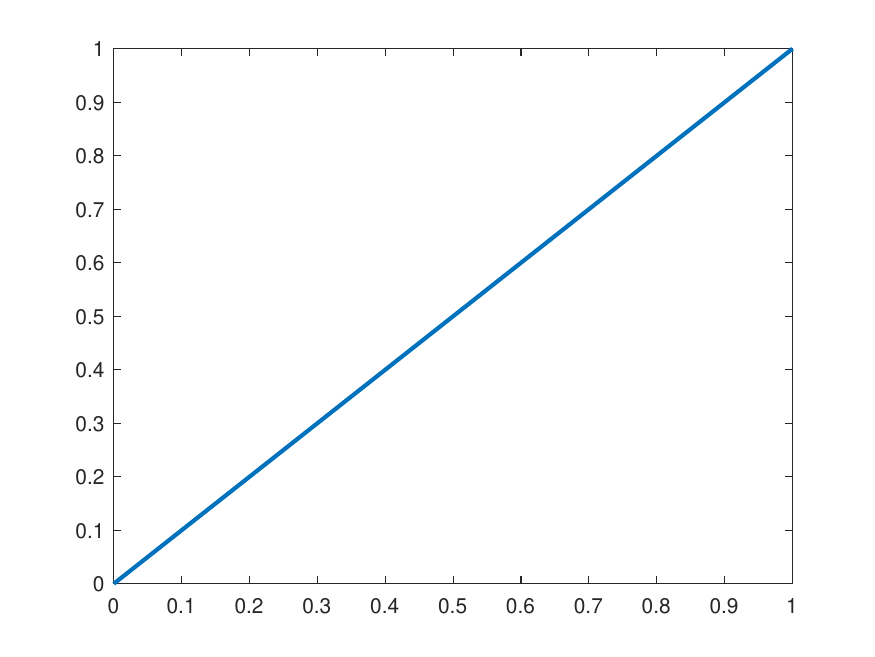}
	\end{subfigure}	
	\begin{subfigure}[b]{0.3\textwidth}
		 \includegraphics[width=6cm,height=3cm]{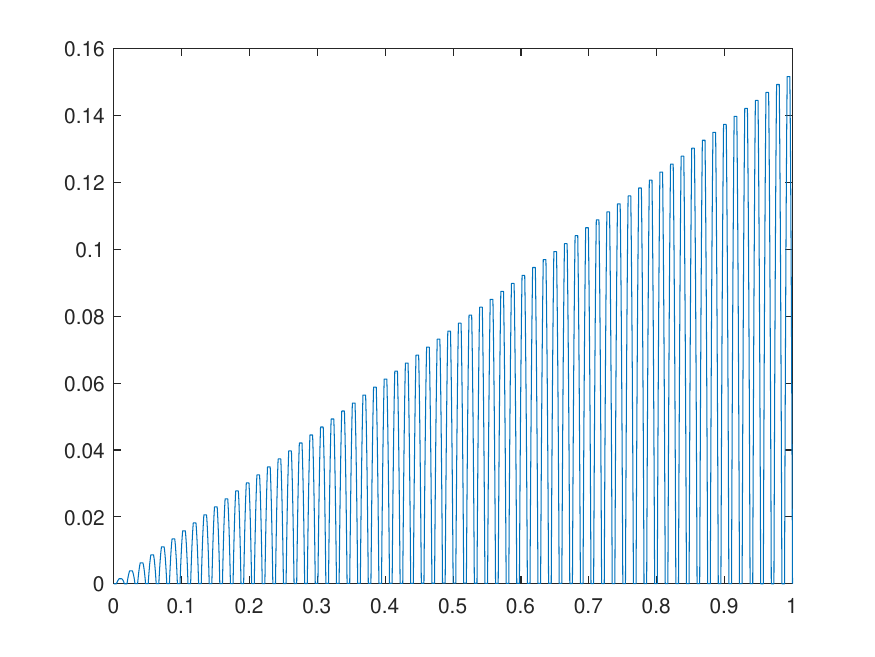}
	\end{subfigure}
	\begin{subfigure}[b]{0.3\textwidth}
		 \includegraphics[width=6cm,height=3cm]{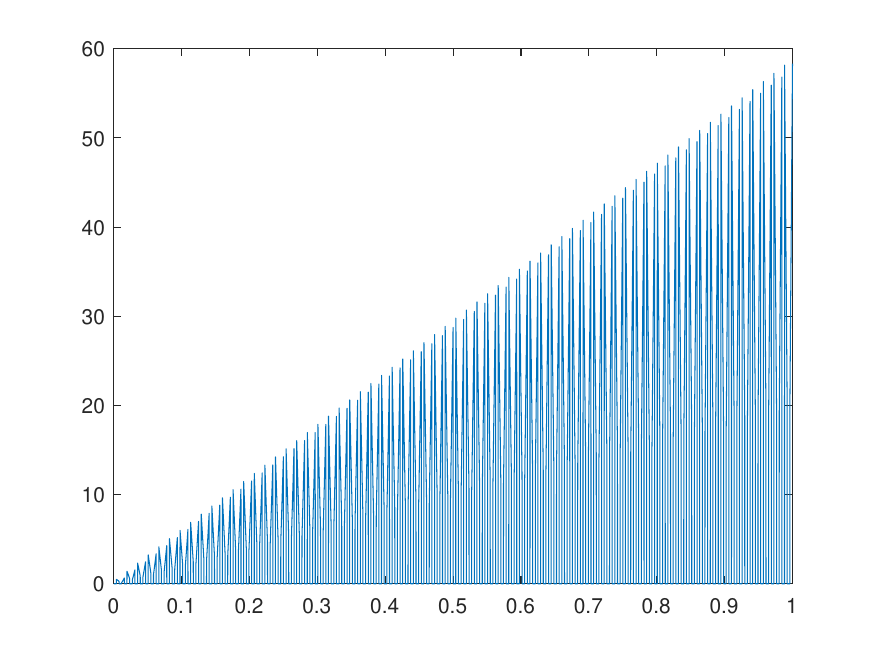}
	\end{subfigure}
	\begin{subfigure}[b]{0.3\textwidth}
		 \includegraphics[width=6cm,height=3cm]{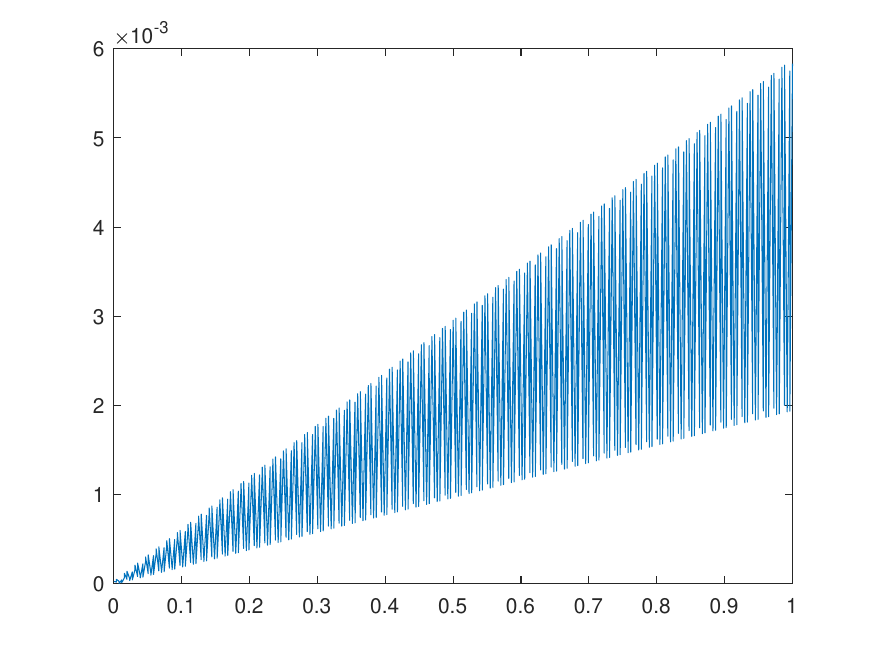}
	\end{subfigure}
	\caption{\cref{Special:ex2}: The top row: $a$ in [0,1] (first panel), $a$ in [0,0.1] (second panel), $u$ in [0,1] (third panel), and $u$ in [0,0.1] (fourth panel). The middle row: $u'$ in [0,1] (first panel), $u'$ in [0,0.1] (second panel), $au'$ in [0,1] (third panel), and $f$ in [0,1] (fourth panel). The bottom row: $|\tilde{u}_H-u|$ (first panel), $|\tilde{u}'_H-u'|$ (second panel), and  $|a\tilde{u}'_H-au'|$ (third panel) in [0,1] with $H=\frac{1}{2^6}$ at all $x_i=i/N$ for $i=0,\dots, N$ with $N=2^{14}$ in \eqref{fine:xi}.}
	\label{fig:exam2}
\end{figure}	

\begin{example}\label{Special:ex2:infty}
	\normalfont
	Consider $f(x)=1$ and a continuous diffusion coefficient $a(x)=\frac{1}{x^2}\frac{1}{1.05+\sin(2^{10}\pi x)}$ with the high-frequency oscillation.
Note that $\lim_{x\to 0^+} a(x)=\infty$.
	Then the exact solution $u$ of \eqref{Model:Problem} can be expressed as
	\be
	\begin{split}
	u(x)=&\frac{-\ca_1\ra_1x^3}{3} - \frac{\ra_1x^4}{4} + \frac{\ca_1x^2\cos(\ra_2x)}{\ra_2} - \frac{2\ca_1\cos(\ra_2x)}{\ra_2^3}
	- \frac{2\ca_1x\sin(\ra_2x)}{\ra_2^2} \\
	&+ \frac{x^3\cos(\ra_2x)}{\ra_2} - \frac{3x^2\sin(\ra_2x)}{\ra_2^2} + \frac{6\sin(\ra_2x)}{\ra_2^4} - \frac{6x\cos(\ra_2x)}{\ra_2^3} + \ca_2,
	\end{split}
	\ee
	where $\ra_1=1.05$, $\ra_2=2^{10}\pi$,
	and  the constants $\ca_{1},\ca_{2}$  can be uniquely determined by imposing the boundary condition $u(0)=u(1)=0$. The results on numerical solutions $\tilde{u}_H$ and $u_{H/2}$ are presented in \cref{table1:example2:infty,table2:example2:infty,table3:example2:infty,fig:exam2:infty}.
\end{example}

\begin{table}[htbp]
	\caption{Performance of the $l_2$ norm of the error in \cref{Special:ex2:infty} of our proposed derivative-orthogonal wavelet multiscale method  on the uniform Cartesian mesh $H$. Note that $N=2^{15}$ in \eqref{fine:xi} for estimating the $l_2$ norm. Since $\lim_{x\to 0^+}a(x)=\infty$, we replace $\sum_{i=0}^N$ by $\sum_{i=1}^N$ in $\frac{\|a\tilde{u}'_H-au'\|_2}{\|au'\|_2}$ in \eqref{Error:1}, and $a_{\max}=\max\limits_{2^{-15}\le x \le 1} a(x)$.}
	\centering
		\renewcommand\arraystretch{0.7}
	\setlength{\tabcolsep}{1mm}{
		 \begin{tabular}{c|c|c|c|c|c|c|c|c}
			\hline
			$H$
			&   $\frac{\|\tilde{u}_H-u\|_2}{\|u\|_2}$
			& order &  $\frac{\|\tilde{u}'_H-u'\|_2}{\|u'\|_2}$ & order &   $\frac{\|a\tilde{u}'_H-au'\|_2}{\|au'\|_2}$
			&order & $\kappa$ & $\frac{a_{\max}}{a_{\min}}$ \\
			\hline	
$1/2$   &4.7851E-01   &   &7.3506E-01   &   &4.0353E+00   &   &1.2E+05   &1.9E+09 \\
$1/2^{2}$   &1.3535E-01   &1.82   &4.2175E-01   &0.80   &5.3416E-01   &2.92   &7.2E+05   &1.9E+09 \\
$1/2^{3}$   &3.5206E-02   &1.94   &2.1820E-01   &0.95   &1.0874E-01   &2.30   &2.2E+06   &1.9E+09 \\
$1/2^{4}$   &8.9328E-03   &1.98   &1.1000E-01   &0.99   &4.7332E-02   &1.20   &5.2E+06   &1.9E+09 \\
$1/2^{5}$   &2.2461E-03   &1.99   &5.5063E-02   &1.00   &2.3678E-02   &1.00   &1.1E+07   &1.9E+09 \\
$1/2^{6}$   &5.6249E-04   &2.00   &2.7445E-02   &1.00   &1.1889E-02   &0.99   &2.4E+07   &1.9E+09 \\
$1/2^{7}$   &1.4022E-04   &2.00   &1.3521E-02   &1.02   &5.9820E-03   &0.99   &4.9E+07   &1.9E+09 \\
			\hline
	\end{tabular}}
	\label{table1:example2:infty}
\end{table}

\begin{table}[htbp]
	\caption{Performance of the $l_\infty$ norm of the error in \cref{Special:ex2:infty} of our proposed derivative-orthogonal wavelet multiscale method  on the uniform Cartesian mesh $H$. Note that $N=2^{15}$ in \eqref{fine:xi} for estimating the $l_\infty$ norm. Since $\lim_{x\to 0^+}a(x)=\infty$, we replace $\max_{0\le i\le N}$ by $\max_{1\le i\le N}$ in $\|a\tilde{u}'_H-au'\|_\infty$ in  \eqref{Error:2}, and $a_{\max}=\max\limits_{ 2^{-15}\le x \le 1} a(x)$.}
	\centering
		\renewcommand\arraystretch{0.7}	
	\setlength{\tabcolsep}{1mm}{
		 \begin{tabular}{c|c|c|c|c|c|c|c|c}
			\hline
			$H$
			&   $\|\tilde{u}_H-u\|_\infty$
			& order &  $\|\tilde{u}'_H-u'\|_\infty$ & order &   $\|a\tilde{u}'_H-au'\|_\infty$ & order  & $\kappa$ & $\frac{a_{\max}}{a_{\min}}$ \\
			\hline	
$1/2$   &1.5972E-02   &   &3.6152E-01   &   &2.6852E+02   &   &1.2E+05   &1.9E+09 \\
$1/2^{2}$   &5.8117E-03   &1.46   &2.1609E-01   &0.74   &3.3392E+01   &3.01   &7.2E+05   &1.9E+09 \\
$1/2^{3}$   &1.7303E-03   &1.75   &1.1623E-01   &0.89   &3.6502E+00   &3.19   &2.2E+06   &1.9E+09 \\
$1/2^{4}$   &4.7107E-04   &1.88   &5.9318E-02   &0.97   &2.5125E-01   &3.86   &5.2E+06   &1.9E+09 \\
$1/2^{5}$   &1.2284E-04   &1.94   &2.9109E-02   &1.03   &6.4687E-02   &1.96   &1.1E+07   &1.9E+09 \\
$1/2^{6}$   &3.1387E-05   &1.97   &1.4016E-02   &1.05   &5.4947E-02   &0.24   &2.4E+07   &1.9E+09 \\
$1/2^{7}$   &7.9477E-06   &1.98   &6.2779E-03   &1.16   &2.9363E-02   &0.90   &4.9E+07   &1.9E+09 \\
			\hline
	\end{tabular}}
	\label{table2:example2:infty}
\end{table}	

\begin{table}[htbp]
	\caption{Performance of  $l_2$ and $l_\infty$ norms of the error in \cref{Special:ex2:infty} of the standard second-order FEM  on the uniform Cartesian mesh $H$. Note that $N=2^{15}$ in \eqref{fine:xi} for estimating $l_2$ and $l_\infty$ norms.  Since $\lim_{x\to 0^+}a(x)=\infty$, we replace  $\sum_{i=0}^N$ by $\sum_{i=1}^N$ and $\max_{0\le i\le N}$ by $\max_{1\le i\le N}$  in the following  $\frac{\|au'_{H/2}-au'\|_2}{\|au'\|_2}$  and $\|au'_{H/2}-au'\|_\infty$ errors, and $a_{\max}=\max\limits_{2^{-15}\le x \le 1} a(x)$. }
	\centering
	\renewcommand\arraystretch{0.7}	
	\setlength{\tabcolsep}{0.1mm}{
		 \begin{tabular}{c|c|c|c|c|c|c|c|c}
			\hline
			$H$ 	&   $\frac{\|u_{H/2}-u\|_2}{\|u\|_2}$
			&   $\frac{\|u'_{H/2}-u'\|_2}{\|u'\|_2}$ &   $\frac{\|au'_{H/2}-au'\|_2}{\|au'\|_2}$
			&   $\|u_{H/2}-u\|_\infty$
			&   $\|u'_{H/2}-u'\|_\infty$ &   $\|au'_{H/2}-au'\|_\infty$   & $\kappa$ & $\frac{a_{\max}}{a_{\min}}$ \\
			\hline	
$1/2$   &7.8E-01   &8.7E-01   &1.5E+01   &2.1E+01   &4.8E+02   &1.0E+06   &6.1E+04   &1.9E+09\\
$1/2^{2}$   &7.1E-01   &8.2E-01   &8.3E+00   &2.0E+01   &4.6E+02   &5.5E+05   &4.9E+05   &1.9E+09\\
$1/2^{3}$   &7.0E-01   &8.1E-01   &4.5E+00   &1.9E+01   &4.4E+02   &2.9E+05   &3.9E+06   &1.9E+09\\
$1/2^{4}$   &7.0E-01   &8.0E-01   &2.6E+00   &1.9E+01   &4.4E+02   &1.5E+05   &3.1E+07   &1.9E+09\\
$1/2^{5}$   &6.9E-01   &8.0E-01   &1.9E+00   &1.9E+01   &4.3E+02   &7.4E+04   &2.5E+08   &1.9E+09\\
$1/2^{6}$   &6.9E-01   &8.0E-01   &1.6E+00   &1.9E+01   &4.3E+02   &3.7E+04   &2.0E+09   &1.9E+09\\
$1/2^{7}$   &6.9E-01   &8.0E-01   &1.5E+00   &1.9E+01   &4.3E+02   &1.8E+04   &1.6E+10   &1.9E+09\\
			\hline
	\end{tabular}}
	\label{table3:example2:infty}
\end{table}	

\begin{figure}[htbp]
	\centering
	\begin{subfigure}[b]{0.23\textwidth}
		 \includegraphics[width=4.5cm,height=3.cm]{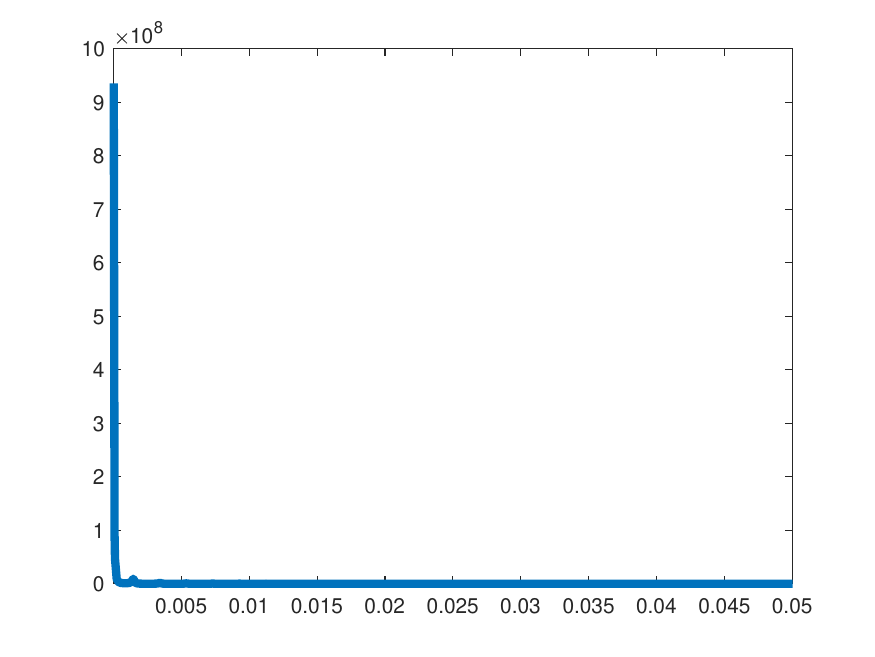}
	\end{subfigure}	
	\begin{subfigure}[b]{0.23\textwidth}
		 \includegraphics[width=4.5cm,height=3.cm]{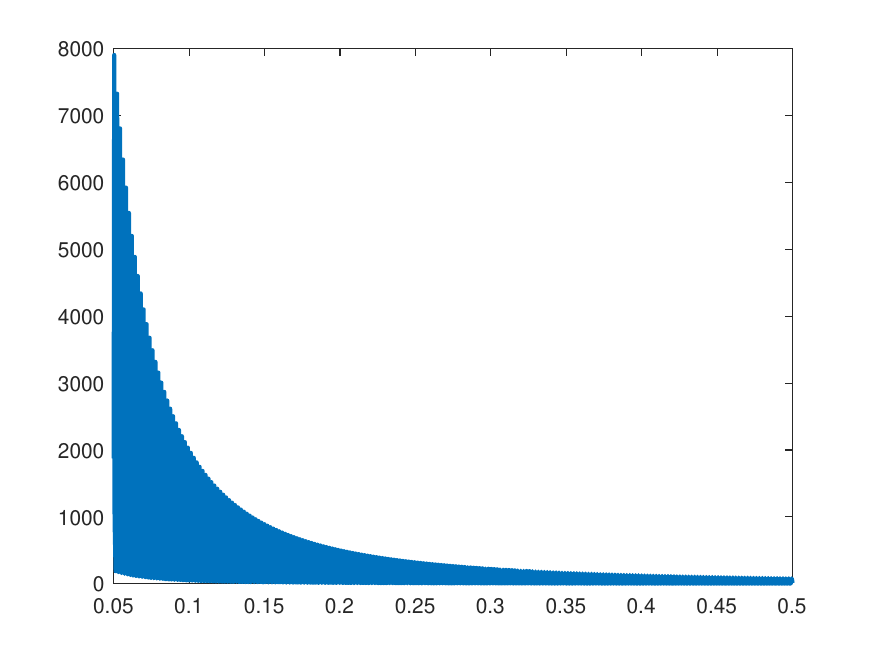}
	\end{subfigure}
	\begin{subfigure}[b]{0.23\textwidth}
		 \includegraphics[width=4.5cm,height=3.cm]{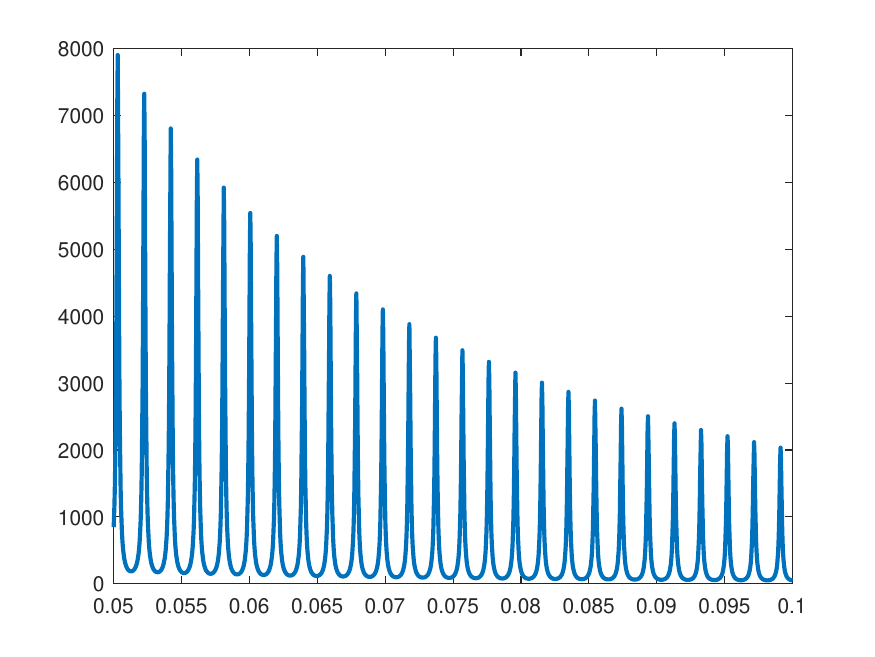}
	\end{subfigure}
	\begin{subfigure}[b]{0.23\textwidth}
		 \includegraphics[width=4.5cm,height=3.cm]{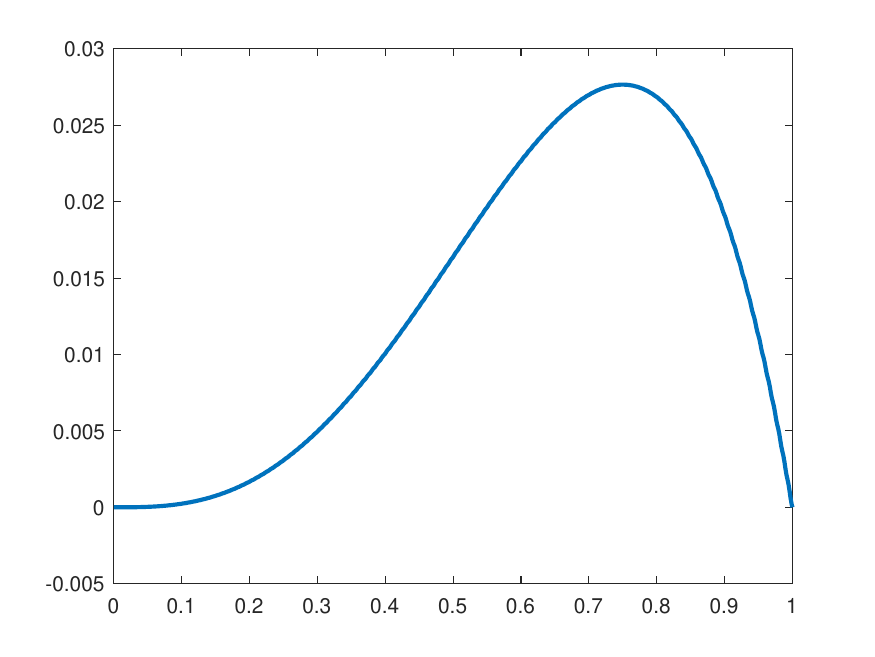}
	\end{subfigure}
	\begin{subfigure}[b]{0.23\textwidth}
		 \includegraphics[width=4.5cm,height=3.cm]{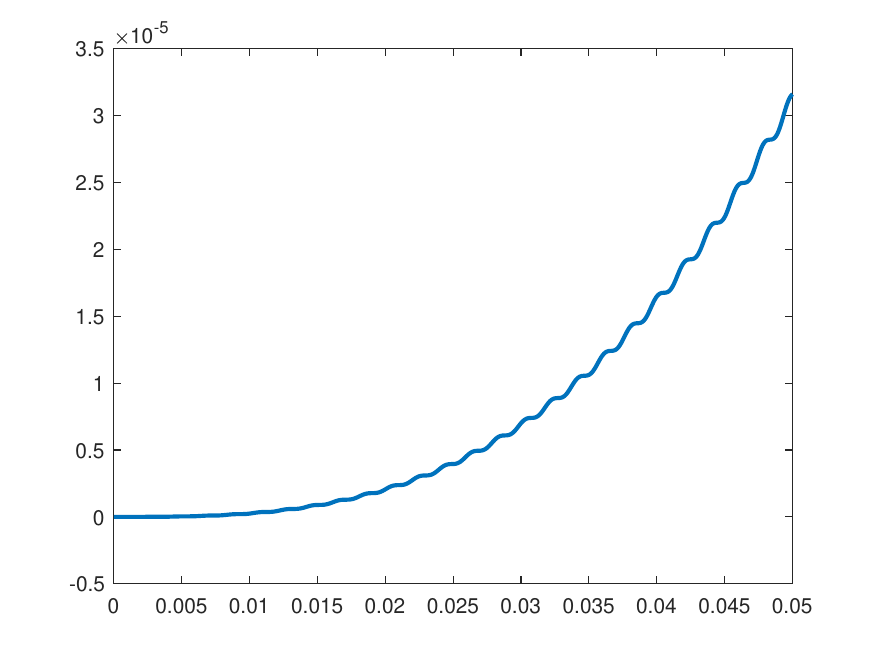}
	\end{subfigure}
	\begin{subfigure}[b]{0.23\textwidth}
		 \includegraphics[width=4.5cm,height=3.cm]{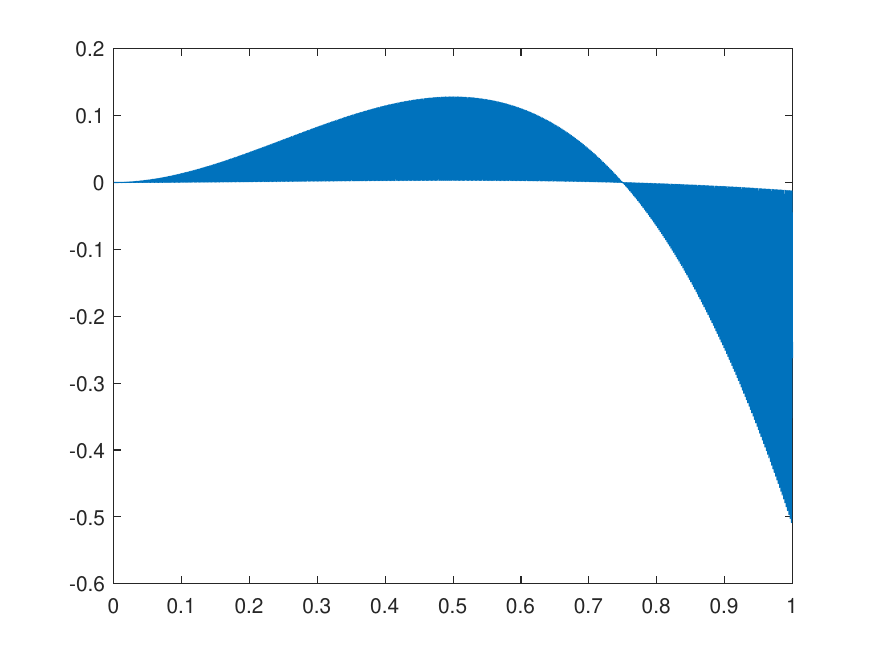}
	\end{subfigure}
	\begin{subfigure}[b]{0.23\textwidth}
		 \includegraphics[width=4.5cm,height=3.cm]{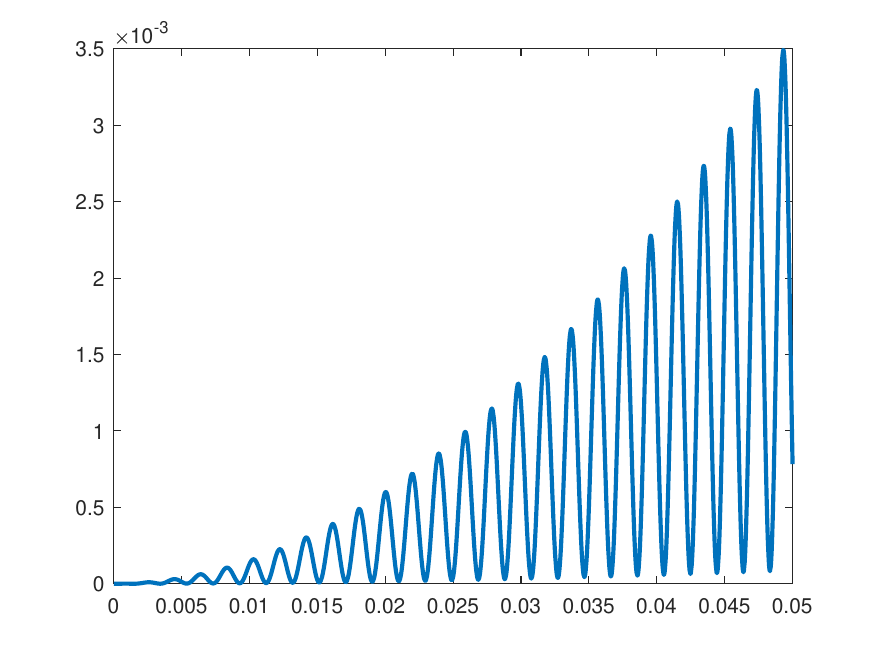}
	\end{subfigure}
	\begin{subfigure}[b]{0.23\textwidth}
		 \includegraphics[width=4.5cm,height=3.cm]{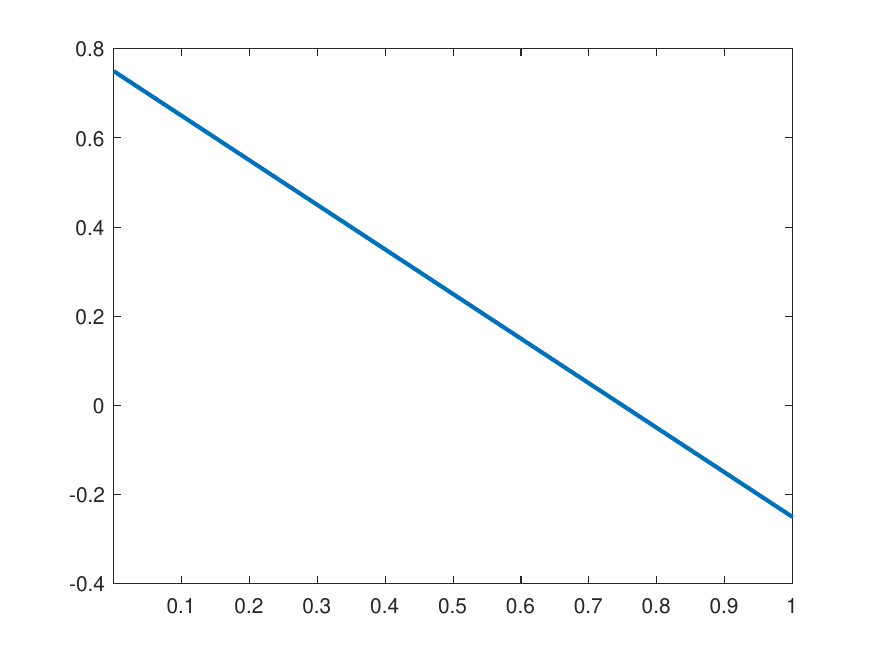}
	\end{subfigure}	
	\begin{subfigure}[b]{0.3\textwidth}
		 \includegraphics[width=6cm,height=3cm]{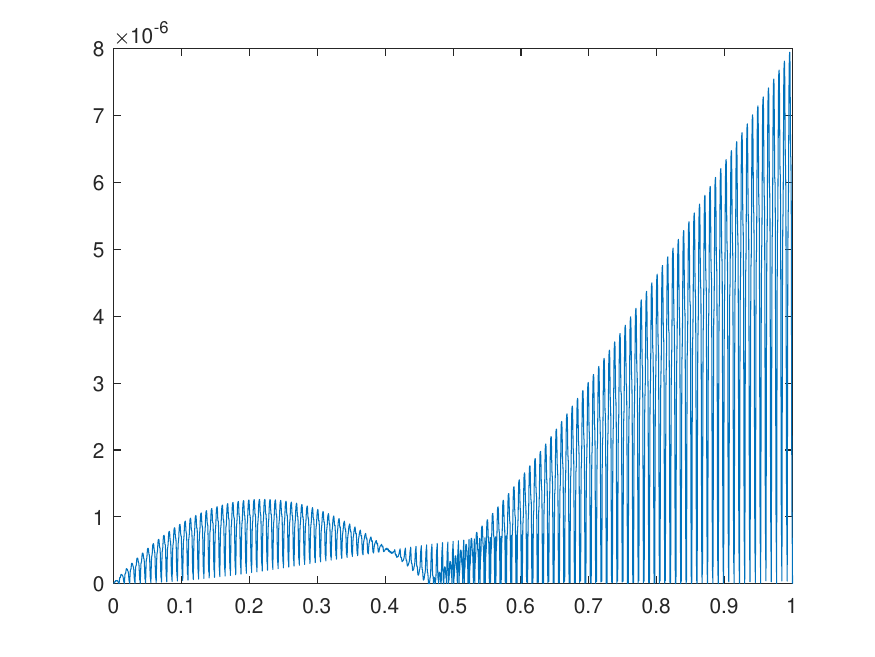}
	\end{subfigure}
	\begin{subfigure}[b]{0.3\textwidth}
		 \includegraphics[width=6cm,height=3cm]{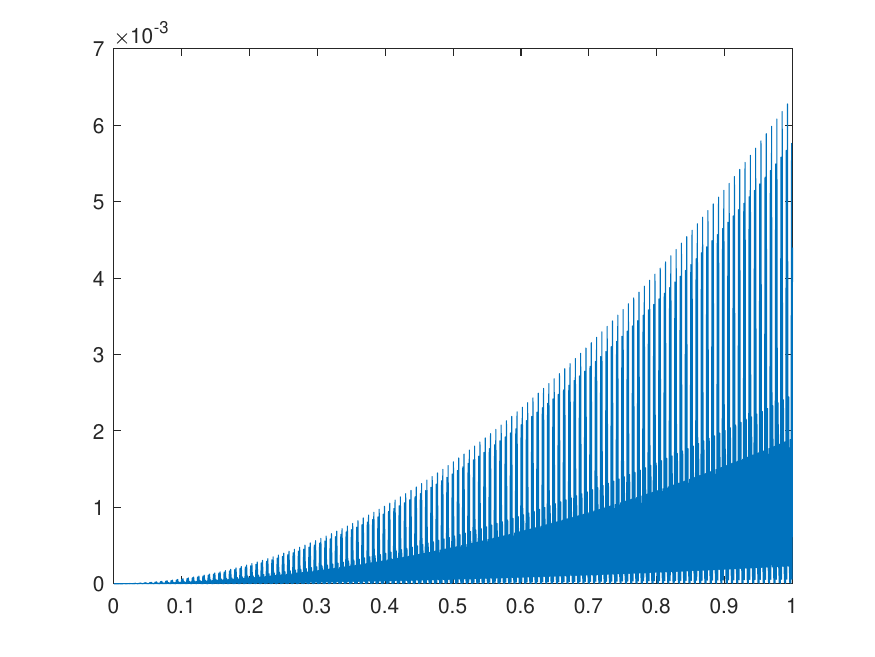}
	\end{subfigure}
	\begin{subfigure}[b]{0.3\textwidth}
		 \includegraphics[width=6cm,height=3cm]{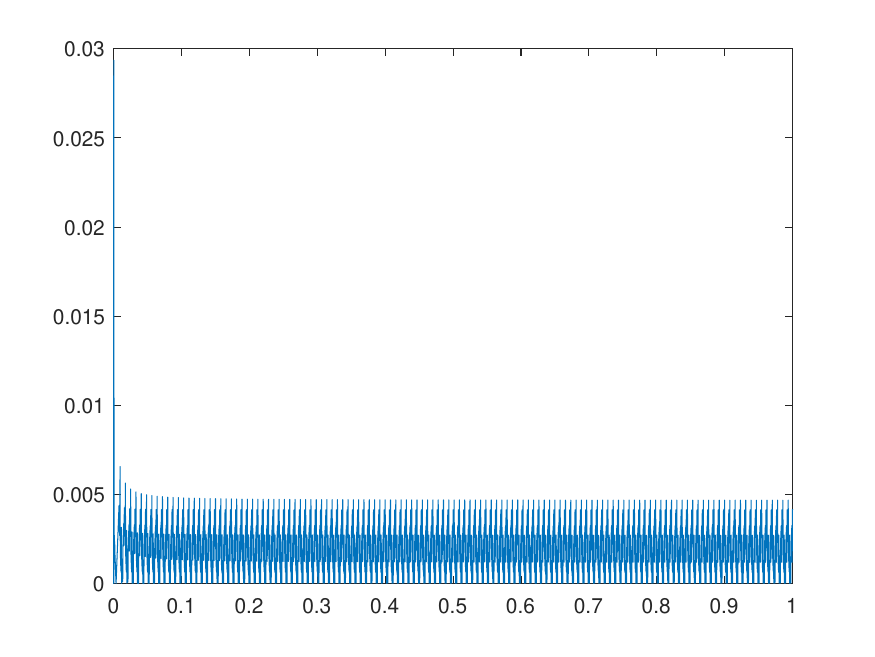}
	\end{subfigure}
	\caption{\cref{Special:ex2:infty}: The top row: $a$ in $[2^{-15},0.05]$ (first panel), $a$ in [0.05,0.5] (second panel), $a$ in [0.05,0.1] (third panel), and $u$ in [0,1] (fourth panel). The middle row: $u$ in [0,0.05] (first panel), $u'$ in [0,1] (second panel), $u'$ in [0,0.05] (third panel), and $au'$ in $[2^{-15},1]$ (fourth panel). The bottom row: $|\tilde{u}_H-u|$ (first panel)  in [0,1], $|\tilde{u}'_H-u'|$ (second panel)  in [0,1], and  $|a\tilde{u}'_H-au'|$ (third panel) in $[2^{-15},1]$ with $H=\frac{1}{2^7}$ at all $x_i=i/N$,  $i=0, 1,\dots, N$ with $N=2^{15}$ in \eqref{fine:xi}.}
	\label{fig:exam2:infty}
\end{figure}	

	\subsection{The exact solution $u$ is not available}
\begin{example}\label{Special:ex3}
	\normalfont
	Consider $f(x)=\sin(2x)\cos(x)$ and a continuous coefficient
\[
a(x)=	2\exp(1) +\sin(2^9\pi x)\exp(x^2)\cos(10x),
\]
with the high-frequency oscillation. Note that the exact solution $u$ of \eqref{Model:Problem} is not available.
The results on the numerical solution $\tilde{u}_H$ are presented in \cref{table1:example3,table2:example3,fig:exam3}.
\end{example}
\begin{table}[htbp]
\caption{Performance of the $l_2$ norm of the error in \cref{Special:ex3} of our proposed derivative-orthogonal wavelet multiscale method  on the uniform Cartesian mesh $H$. Note that $N=2^{14}$ in \eqref{fine:xi} for estimating the $l_2$ norm.}
\centering
	\renewcommand\arraystretch{0.7}
\setlength{\tabcolsep}{3mm}{
	\begin{tabular}{c|c|c|c|c|c|c|c|c}
		\hline
		$H$
		&   $\frac{\|\tilde{u}_H-\tilde{u}_{H/2}\|_2}{\|\tilde{u}_{H/2}\|_2}$
		& order &  $\frac{\|\tilde{u}'_H-\tilde{u}'_{H/2}\|_2}{\|\tilde{u}'_{H/2}\|_2}$ & order &   $\frac{\|a\tilde{u}'_H-a\tilde{u}'_{H/2}\|_2}{\|a\tilde{u}'_{H/2}\|_2}$
		&order & $\kappa$ & $\frac{a_{\max}}{a_{\min}}$ \\
		\hline	
$1/2$   &2.3154E-01   &   &4.5360E-01   &   &4.5866E-01   &   &1.54   &2.67\\
$1/2^2$   &4.7958E-02   &2.27   &2.0709E-01   &1.13   &2.1327E-01   &1.10   &1.81   &2.67\\
$1/2^3$   &1.1535E-02   &2.06   &1.0093E-01   &1.04   &1.0325E-01   &1.05   &1.93   &2.67\\
$1/2^4$   &3.1247E-03   &1.88   &5.0224E-02   &1.01   &5.1273E-02   &1.01   &1.97   &2.67\\
$1/2^5$   &8.2435E-04   &1.92   &2.5085E-02   &1.00   &2.5615E-02   &1.00   &1.99   &2.67\\
$1/2^6$   &2.1331E-04   &1.95   &1.2539E-02   &1.00   &1.2807E-02   &1.00   &1.99   &2.67\\		 
		\hline
\end{tabular}}
\label{table1:example3}
\end{table}	

\begin{table}[htbp]
\caption{Performance of the $l_\infty$ norm of the error in \cref{Special:ex3} of our proposed derivative-orthogonal wavelet multiscale method  on the uniform Cartesian mesh $H$. Note that $N=2^{14}$ in \eqref{fine:xi} for estimating the $l_\infty$ norm.}
\centering
	\renewcommand\arraystretch{0.7}
\setlength{\tabcolsep}{1mm}{
	\begin{tabular}{c|c|c|c|c|c|c|c|c}
		\hline
		$H$
		&   $\|\tilde{u}_H-\tilde{u}_{H/2}\|_\infty$
		& order &  $\|\tilde{u}'_H-\tilde{u}'_{H/2}\|_\infty$ & order &   $\|a\tilde{u}'_H-a\tilde{u}'_{H/2}\|_\infty$ & order  & $\kappa$ & $\frac{a_{\max}}{a_{\min}}$ \\
		\hline	
$1/2$   &4.2314E-03   &   &2.5856E-02   &   &1.0052E-01   &   &1.54   &2.67\\
$1/2^2$   &1.0619E-03   &1.99   &1.5628E-02   &0.73   &6.3632E-02   &0.66   &1.81   &2.67\\
$1/2^3$  &2.7613E-04   &1.94   &6.6146E-03   &1.24   &2.6730E-02   &1.25   &1.93   &2.67\\
$1/2^4$   &8.7980E-05   &1.65   &3.0974E-03   &1.09   &1.2096E-02   &1.14   &1.97   &2.67\\
$1/2^5$   &2.4401E-05   &1.85   &1.5457E-03   &1.00   &6.5000E-03   &0.90   &1.99   &2.67\\
$1/2^6$   &6.4601E-06   &1.92   &7.7127E-04   &1.00   &3.6661E-03   &0.83   &1.99   &2.67\\		 
		\hline
\end{tabular}}
\label{table2:example3}
\end{table}	
\begin{figure}[htbp]
\centering
\begin{subfigure}[b]{0.23\textwidth}
	 \includegraphics[width=4.5cm,height=3.cm]{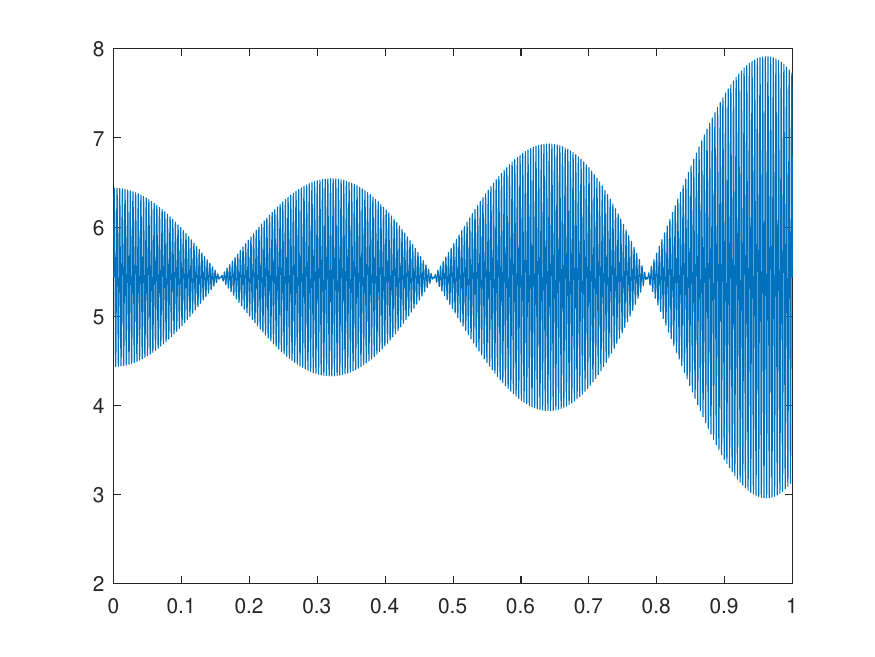}
\end{subfigure}
\begin{subfigure}[b]{0.23\textwidth}
	 \includegraphics[width=4.5cm,height=3.cm]{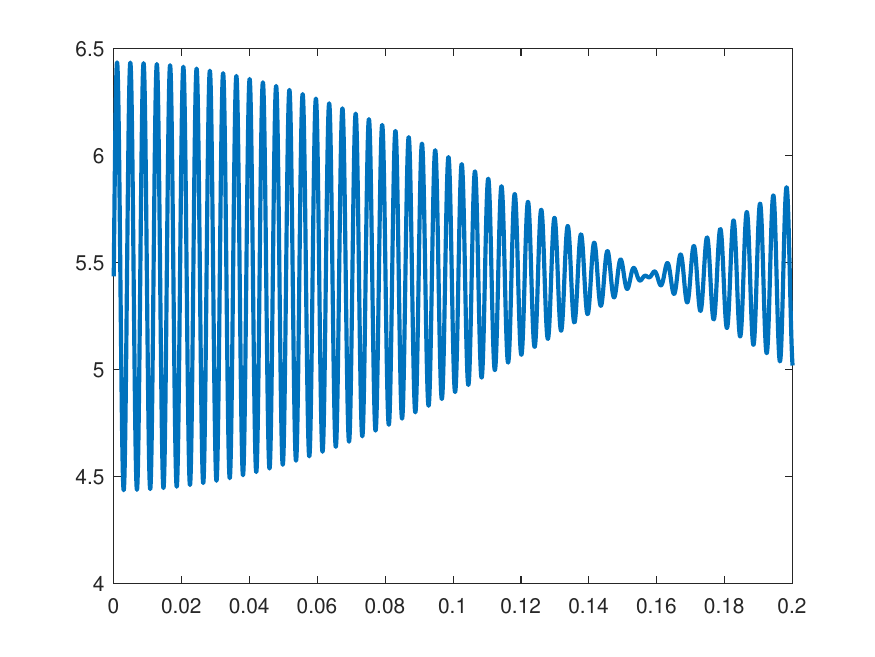}
\end{subfigure}
\begin{subfigure}[b]{0.23\textwidth}
	 \includegraphics[width=4.5cm,height=3.cm]{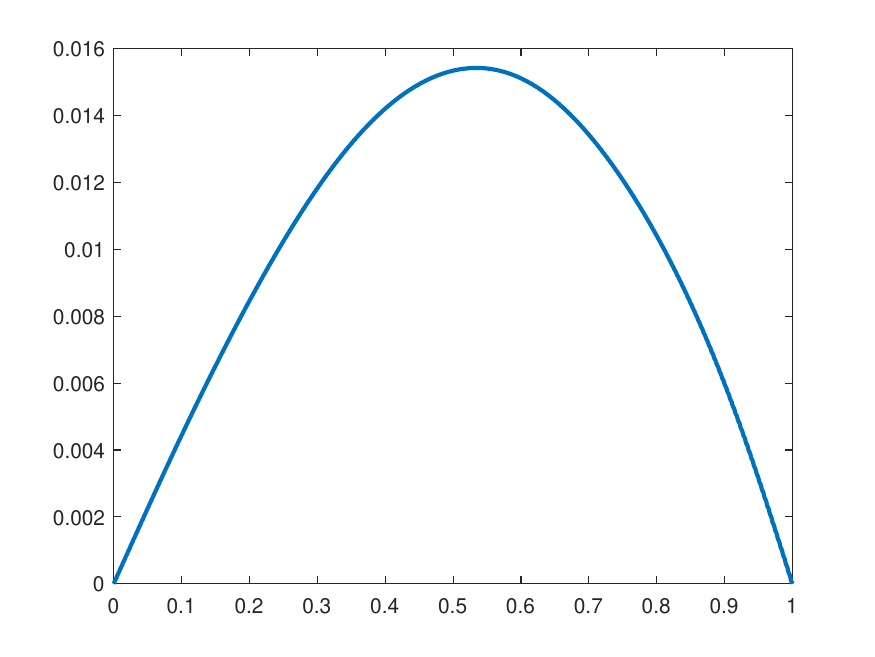}
\end{subfigure}
\begin{subfigure}[b]{0.23\textwidth}
	 \includegraphics[width=4.5cm,height=3.cm]{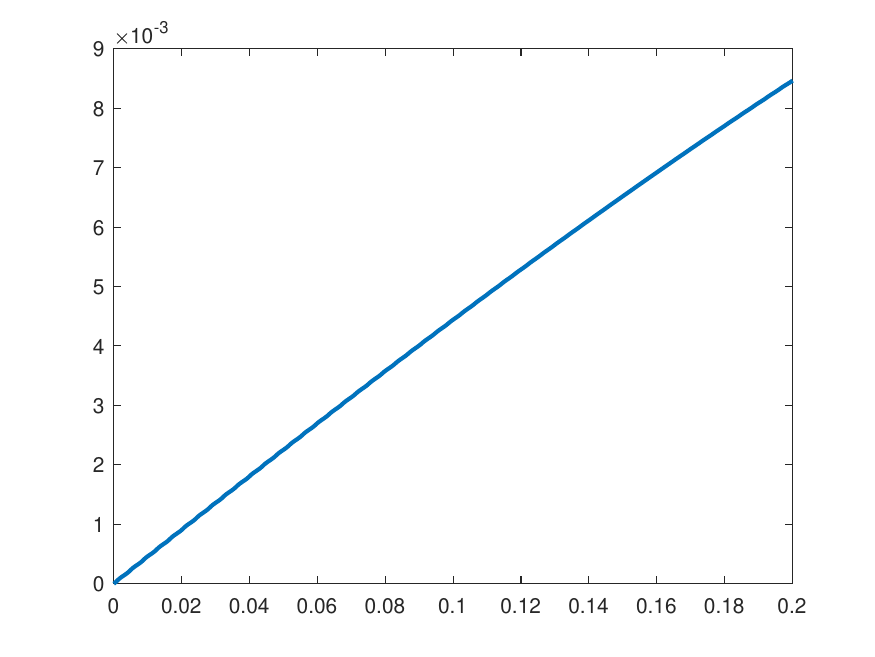}
\end{subfigure}
\begin{subfigure}[b]{0.23\textwidth}
	 \includegraphics[width=4.5cm,height=3.cm]{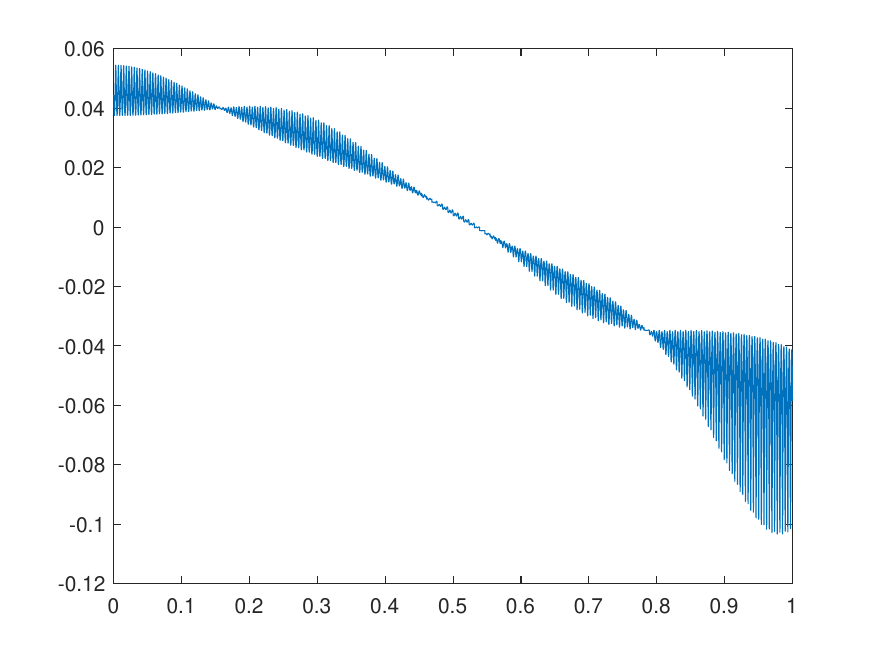}
\end{subfigure}
\begin{subfigure}[b]{0.23\textwidth}
	 \includegraphics[width=4.5cm,height=3.cm]{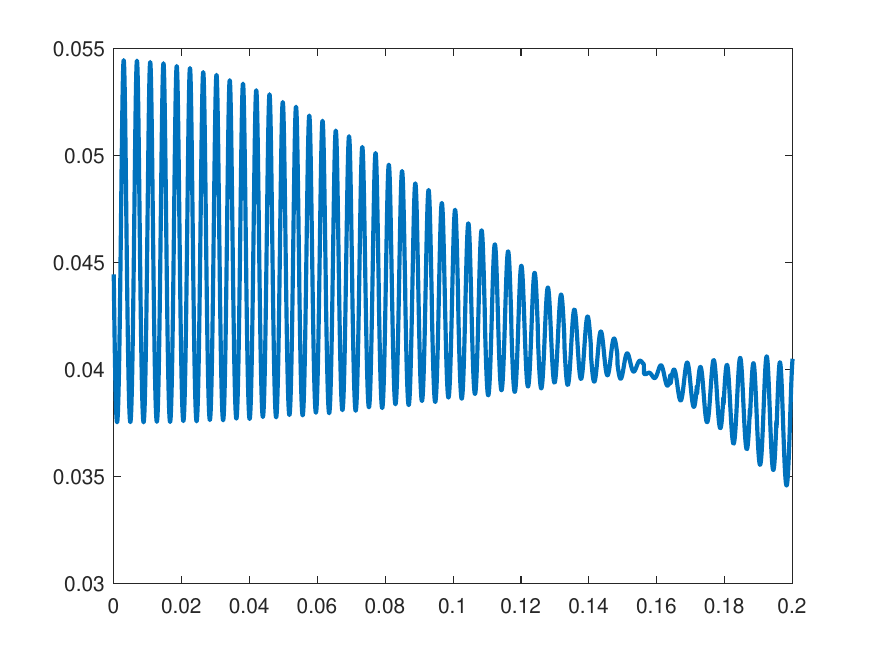}
\end{subfigure}
\begin{subfigure}[b]{0.23\textwidth}
	 \includegraphics[width=4.5cm,height=3.cm]{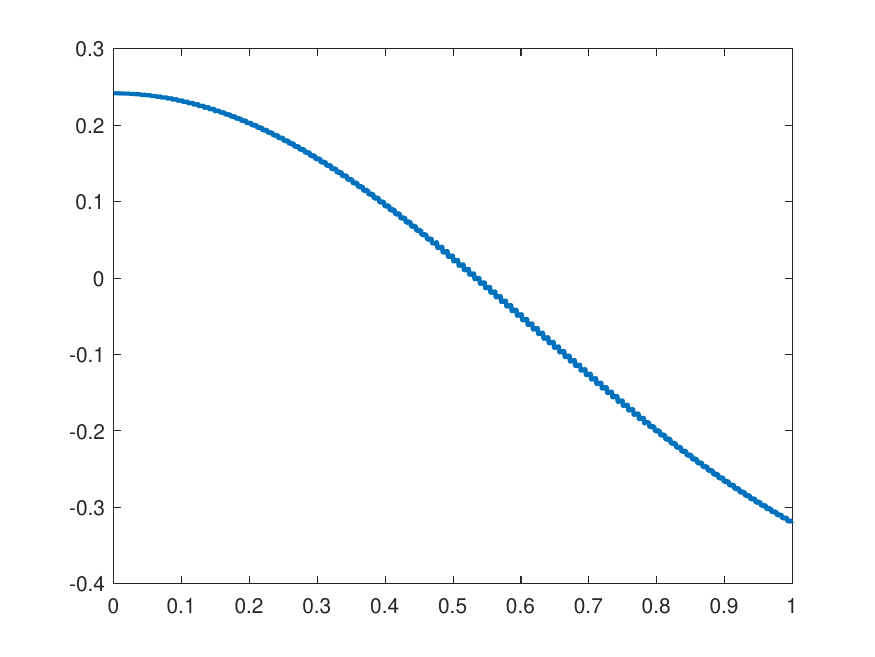}
\end{subfigure}
\begin{subfigure}[b]{0.23\textwidth}
	 \includegraphics[width=4.5cm,height=3.cm]{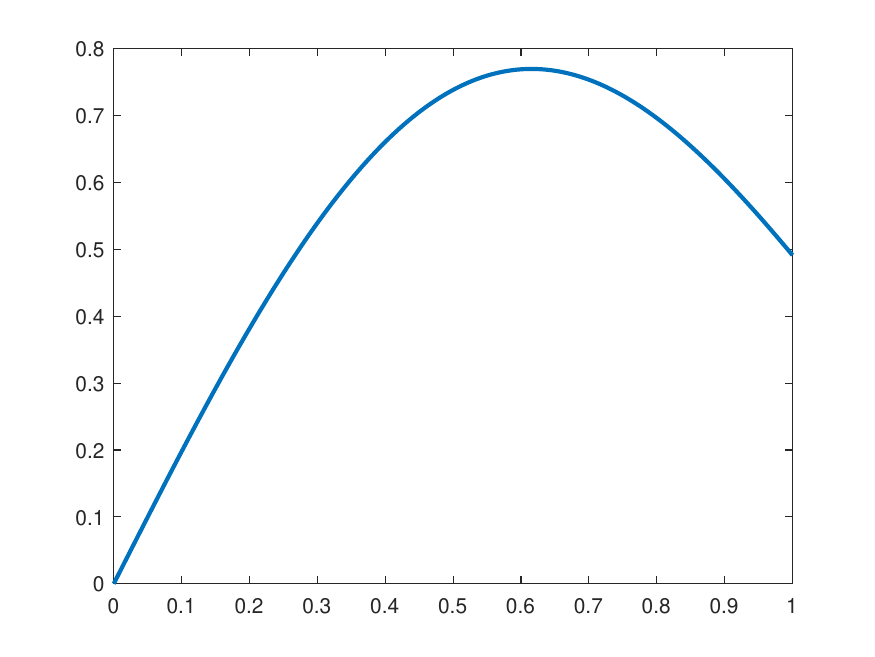}
\end{subfigure}
\begin{subfigure}[b]{0.3\textwidth}
	 \includegraphics[width=6cm,height=3cm]{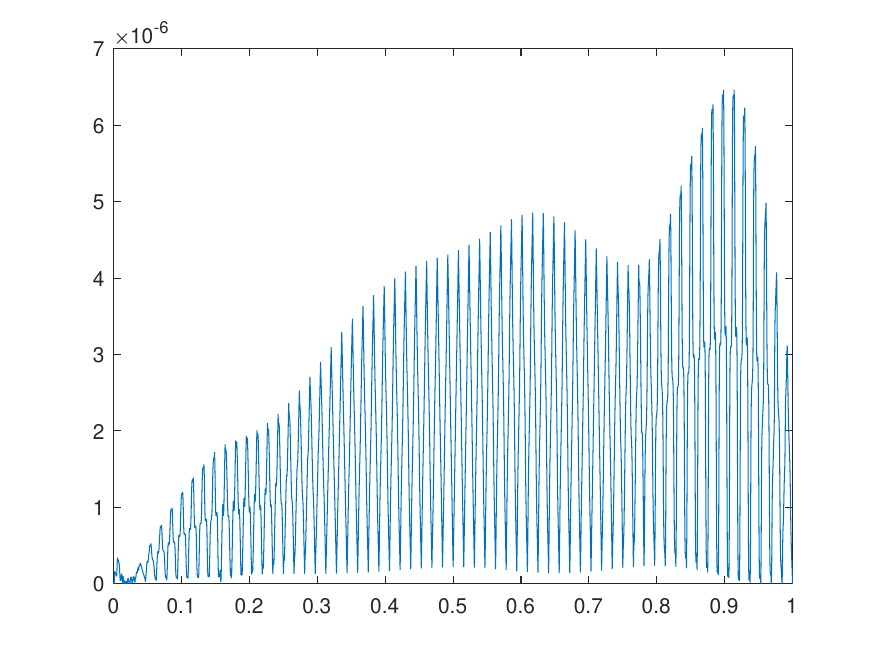}
\end{subfigure}
\begin{subfigure}[b]{0.3\textwidth}
	 \includegraphics[width=6cm,height=3cm]{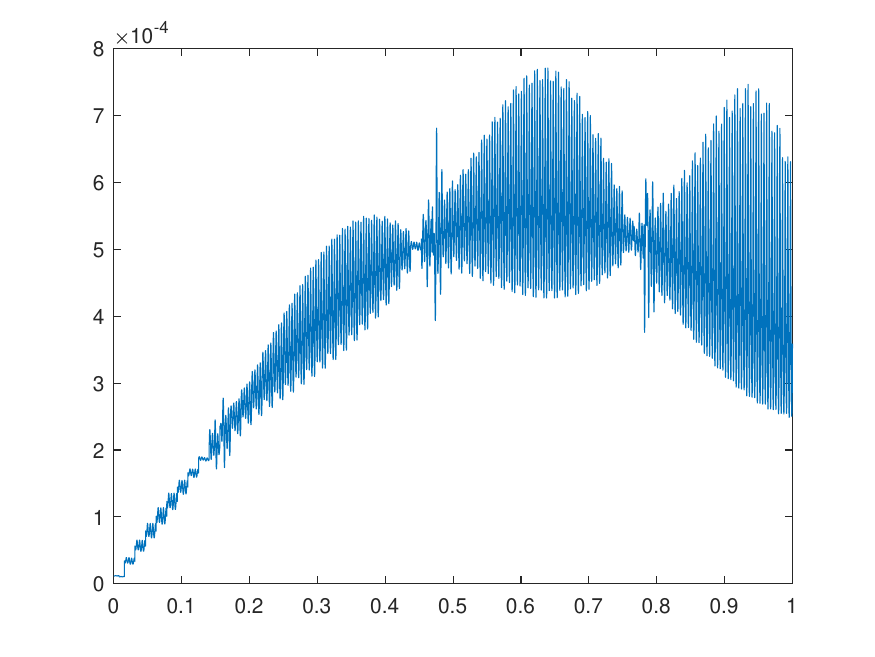}
\end{subfigure}
\begin{subfigure}[b]{0.3\textwidth}
	 \includegraphics[width=6cm,height=3cm]{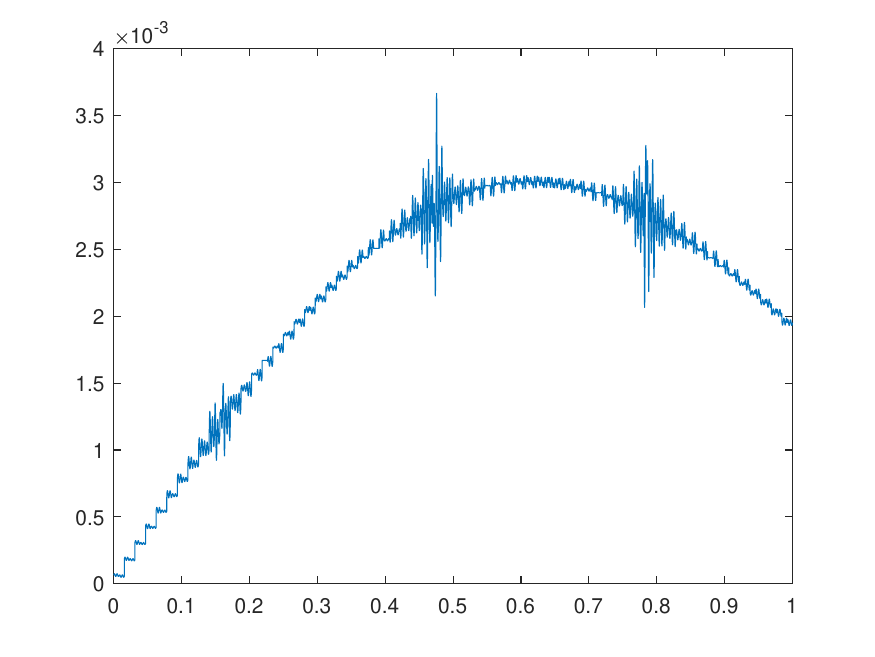}
\end{subfigure}
\caption{\cref{Special:ex3}: The top row: $a$ in [0,1] (first panel), $a$ in [0,0.2] (second panel), $\tilde{u}_{H}$ in [0,1] (third panel), and $\tilde{u}_{H}$ in [0,0.2] (fourth panel). The middle row: $\tilde{u}'_{H}$ in [0,1] (first panel), $\tilde{u}'_{H}$ in [0,0.2] (second panel), $a\tilde{u}'_{H}$ in [0,1] (third panel) with $H=\frac{1}{2^7}$, and $f$ in [0,1] (fourth panel). The bottom row: $|\tilde{u}_{H}-\tilde{u}_{H/2}|$ (first panel), $|\tilde{u}'_{H}-\tilde{u}'_{H/2}|$ (second panel), and  $|a\tilde{u}'_{H}-a\tilde{u}'_{H/2}|$  (third panel) in [0,1] with $H=\frac{1}{2^6}$ at all $x_i=i/N$ for $i=0,\dots, N$ with $N=2^{14}$ in \eqref{fine:xi}.}
\label{fig:exam3}
\end{figure}

\begin{example}\label{Special:ex4}
	\normalfont
	Consider $f(x)=\cos(4x)$ and a discontinuous diffusion coefficient $a$ with high-contrast jumps below:
	\[
	a(x)= \left( 10^4\chi_{[\frac{i-1}{2^8},\frac{i}{2^8})}
	+10^{-4}\chi_{[\frac{i}{2^8}, \frac{i+1}{2^8})} \right)\exp(x^2+1)\left(10+\cos(20x)\right) \quad \text{with} \quad i=1,3,5,\dots,2^8-1.
	\]
	Note that the exact solution $u$ of \eqref{Model:Problem} is not available.
The results on the numerical solution $\tilde{u}_H$ are presented in \cref{table1:example4,table2:example4,fig:exam4}.
\end{example}
\begin{table}[htbp]
	\caption{Performance of the $l_2$ norm of the error in \cref{Special:ex4} of our proposed derivative-orthogonal wavelet multiscale method on the uniform Cartesian mesh $H$. Note that $N=2^{14}$ in \eqref{fine:xi} for estimating the $l_2$ norm. }
	\centering
	\renewcommand\arraystretch{0.7}	
	\setlength{\tabcolsep}{1mm}{
		 \begin{tabular}{c|c|c|c|c|c|c|c|c}
			\hline
			$H$
			&   $\frac{\|\tilde{u}_H-\tilde{u}_{H/2}\|_2}{\|\tilde{u}_{H/2}\|_2}$
			& order &  $\frac{\|\tilde{u}'_H-\tilde{u}'_{H/2}\|_2}{\|\tilde{u}'_{H/2}\|_2}$ & order &   $\frac{\|a\tilde{u}'_H-a\tilde{u}'_{H/2}\|_2}{\|a\tilde{u}'_{H/2}\|_2}$
			&order & $\kappa$ & $\frac{a_{\max}}{a_{\min}}$ \\
			\hline	
$1/2$   &8.1846E-01   &   &1.6153E+00   &   &1.1281E+00   &   &1.1951E+08   &3.0665E+08\\
$1/2^2$   &1.7618E-01   &2.22   &4.9703E-01   &1.70   &3.8735E-01   &1.54   &1.2820E+08   &3.0665E+08\\
$1/2^3$   &3.8195E-02   &2.21   &2.3071E-01   &1.11   &1.8159E-01   &1.09   &1.3075E+08   &3.0665E+08\\
$1/2^4$   &8.6474E-03   &2.14   &1.1397E-01   &1.02   &8.9408E-02   &1.02   &1.3136E+08   &3.0665E+08\\
$1/2^5$   &2.1766E-03   &1.99   &5.6825E-02   &1.00   &4.4544E-02   &1.01   &1.3151E+08   &3.0665E+08\\
$1/2^6$   &5.9792E-04   &1.86   &2.8394E-02   &1.00   &2.2252E-02   &1.00   &1.3155E+08   &3.0665E+08\\
			\hline
	\end{tabular}}
	\label{table1:example4}
\end{table}	

\begin{table}[htbp]
	\caption{Performance of the $l_\infty$ norm of the error in \cref{Special:ex4} of our proposed derivative-orthogonal wavelet multiscale method  on the uniform Cartesian mesh $H$. Note that $N=2^{14}$ in \eqref{fine:xi} for estimating the $l_\infty$ norm.}
	\centering
	\renewcommand\arraystretch{0.7}	
	\setlength{\tabcolsep}{0.1mm}{
		 \begin{tabular}{c|c|c|c|c|c|c|c|c}
			\hline
			$H$
			&   $\|\tilde{u}_H-\tilde{u}_{H/2}\|_\infty$
			& order &  $\|\tilde{u}'_H-\tilde{u}'_{H/2}\|_\infty$ & order &   $\|a\tilde{u}'_H-a\tilde{u}'_{H/2}\|_\infty$ & order  & $\kappa$ & $\frac{a_{\max}}{a_{\min}}$ \\
			\hline	
	$1/2$   &3.0071E+00   &   &2.9791E+01   &   &1.3141E-01   &   &1.1951E+08   &3.0665E+08\\
	$1/2^2$   &9.5963E-01   &1.65   &2.4180E+01   &0.30   &7.0859E-02   &0.89   &1.2820E+08   &3.0665E+08\\
	$1/2^3$   &2.4780E-01   &1.95   &1.2500E+01   &0.95   &3.3683E-02   &1.07   &1.3075E+08   &3.0665E+08\\
	$1/2^4$   &6.9096E-02   &1.84   &5.7479E+00   &1.12   &1.6012E-02   &1.07   &1.3136E+08   &3.0665E+08\\
	$1/2^5$   &1.8739E-02   &1.88   &2.7866E+00   &1.04   &7.8678E-03   &1.03   &1.3151E+08   &3.0665E+08\\
	$1/2^6$   &4.8165E-03   &1.96   &1.3783E+00   &1.02   &3.9355E-03   &1.00   &1.3155E+08   &3.0665E+08\\
			\hline
	\end{tabular}}
	\label{table2:example4}
\end{table}	

\begin{figure}[htbp]
	\centering
	\begin{subfigure}[b]{0.23\textwidth}
		 \includegraphics[width=4.5cm,height=3.cm]{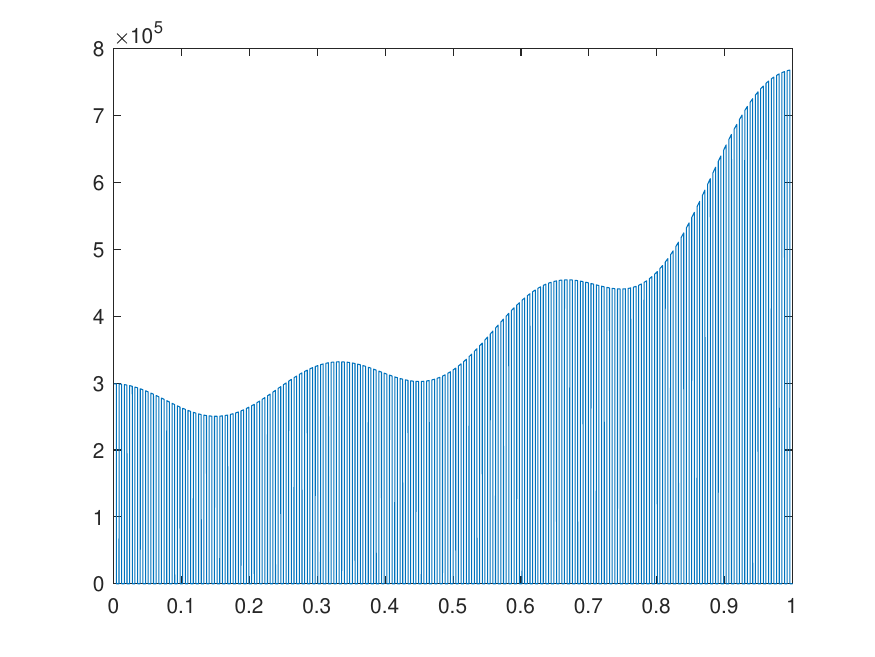}
	\end{subfigure}
	\begin{subfigure}[b]{0.23\textwidth}
		 \includegraphics[width=4.5cm,height=3.cm]{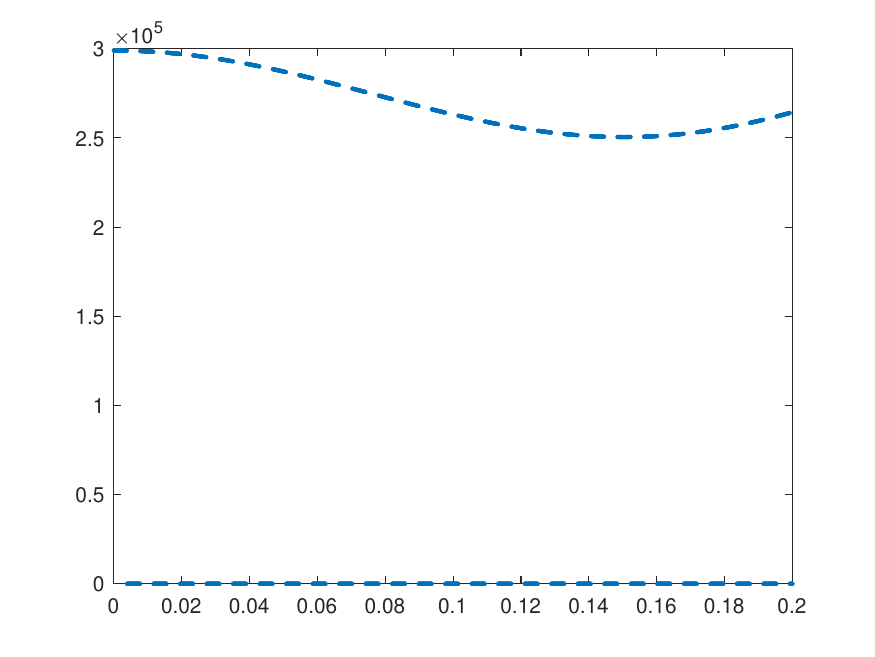}
	\end{subfigure}
	\begin{subfigure}[b]{0.23\textwidth}
		 \includegraphics[width=4.5cm,height=3.cm]{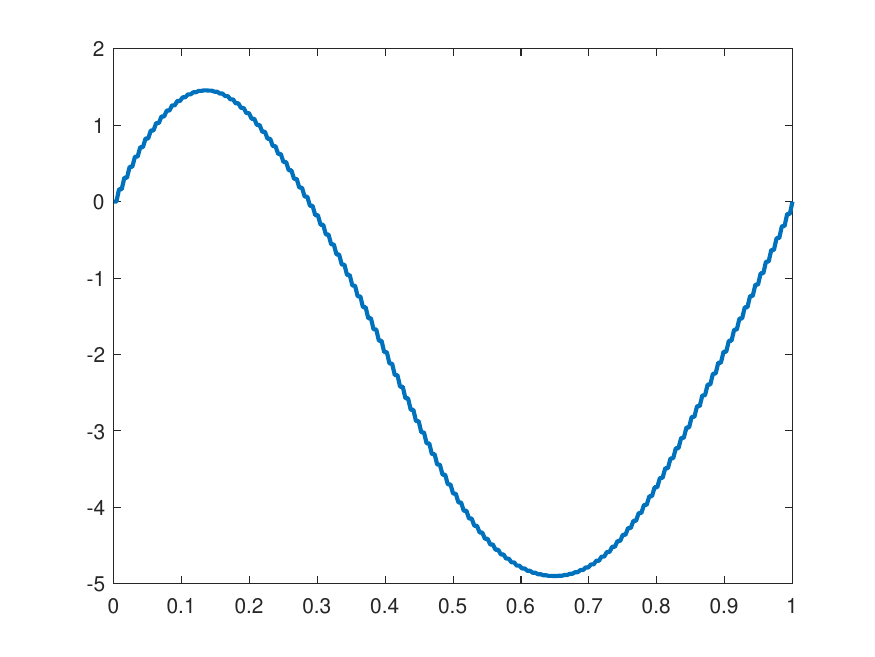}
	\end{subfigure}
	\begin{subfigure}[b]{0.23\textwidth}
		 \includegraphics[width=4.5cm,height=3.cm]{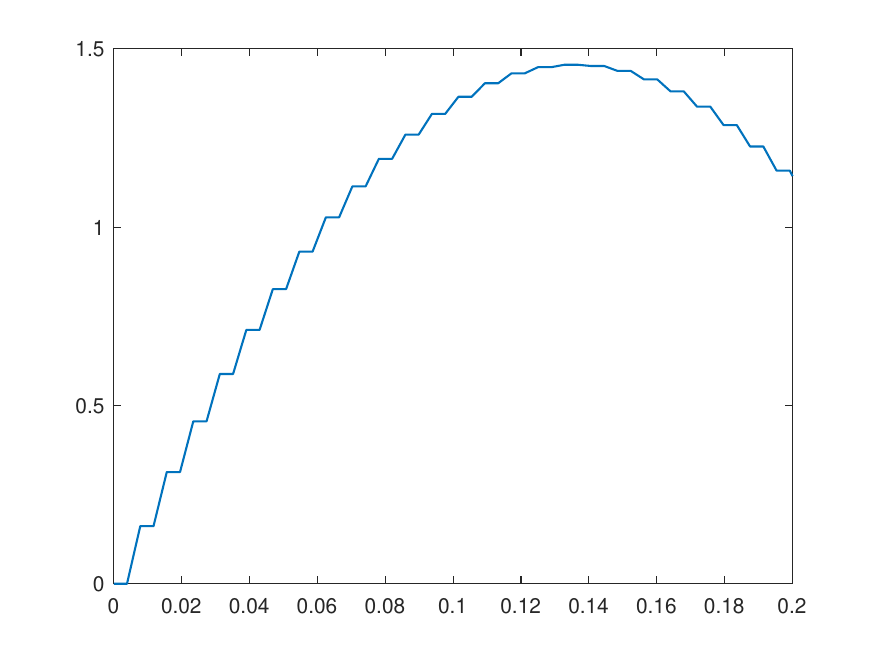}
	\end{subfigure}
	\begin{subfigure}[b]{0.23\textwidth}
		 \includegraphics[width=4.5cm,height=3.cm]{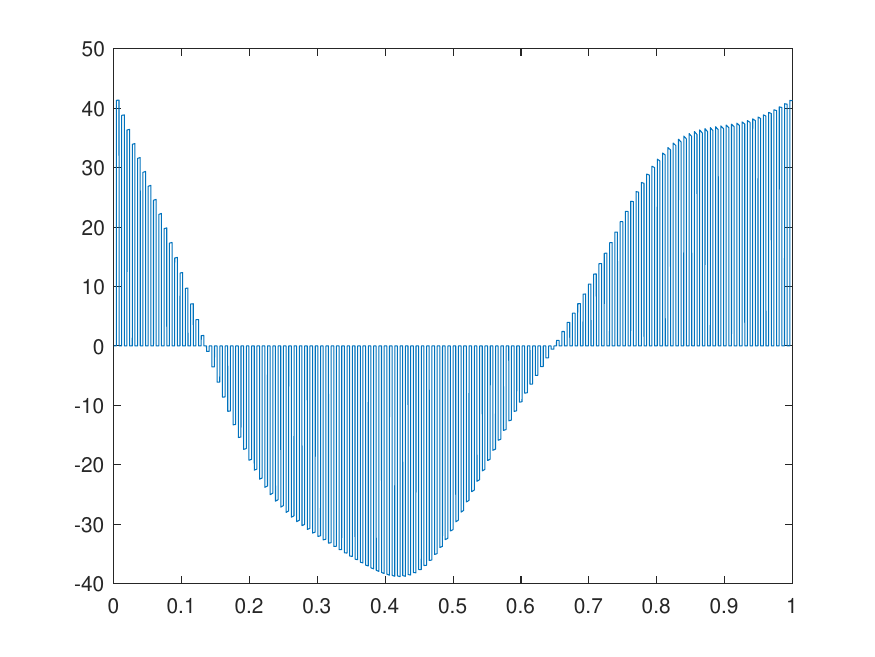}
	\end{subfigure}
	\begin{subfigure}[b]{0.23\textwidth}
		 \includegraphics[width=4.5cm,height=3.cm]{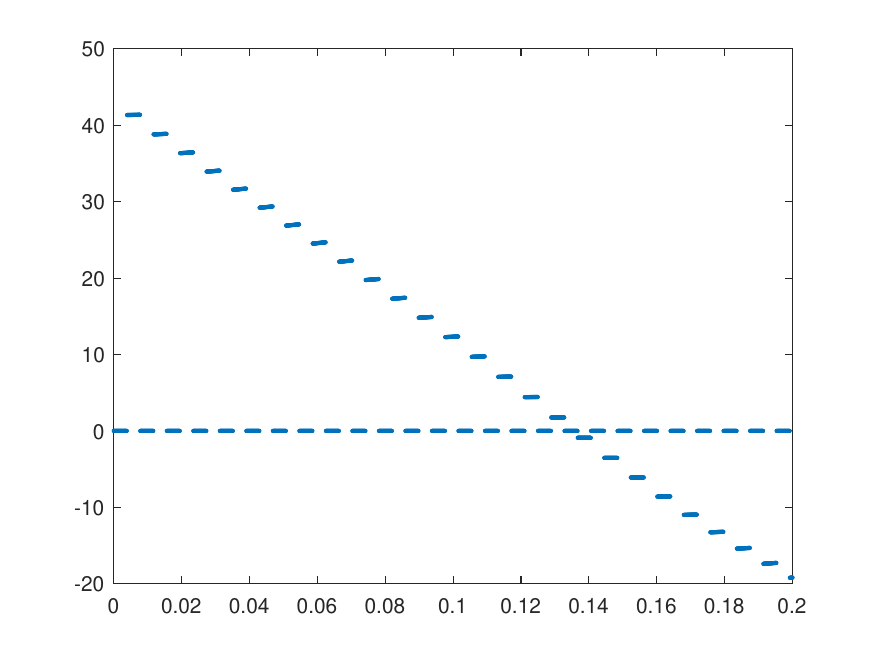}
	\end{subfigure}
	\begin{subfigure}[b]{0.23\textwidth}
		 \includegraphics[width=4.5cm,height=3.cm]{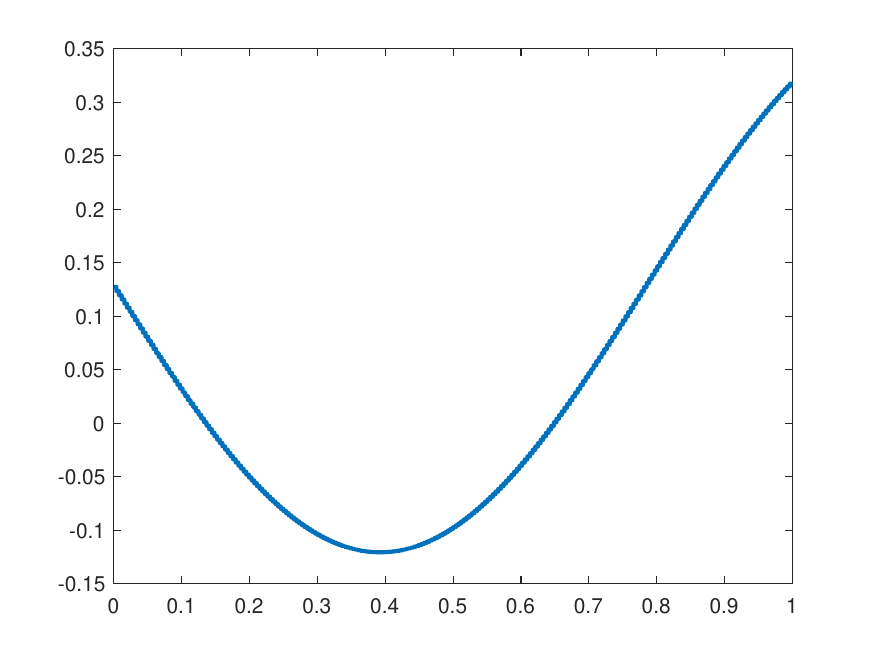}
	\end{subfigure}
	\begin{subfigure}[b]{0.23\textwidth}
		 \includegraphics[width=4.5cm,height=3.cm]{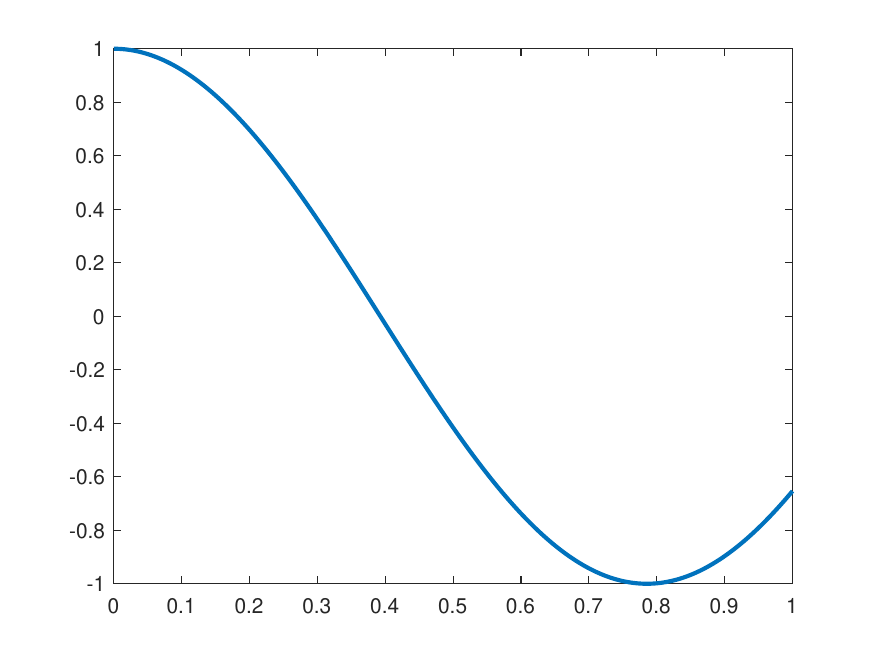}
	\end{subfigure}
	\begin{subfigure}[b]{0.3\textwidth}
		 \includegraphics[width=6cm,height=3cm]{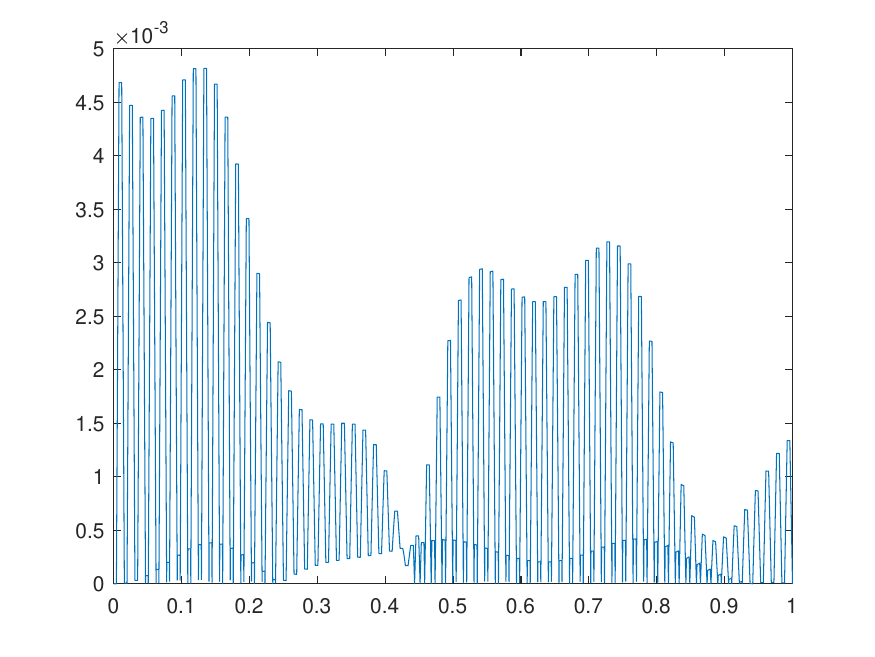}
	\end{subfigure}
	\begin{subfigure}[b]{0.3\textwidth}
		 \includegraphics[width=6cm,height=3cm]{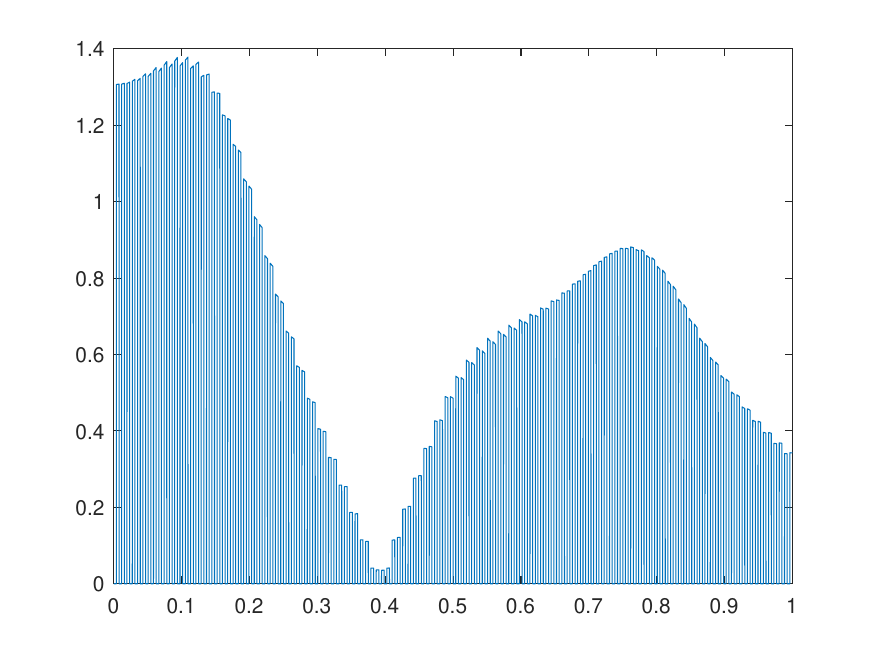}
	\end{subfigure}
	\begin{subfigure}[b]{0.3\textwidth}
		 \includegraphics[width=6cm,height=3cm]{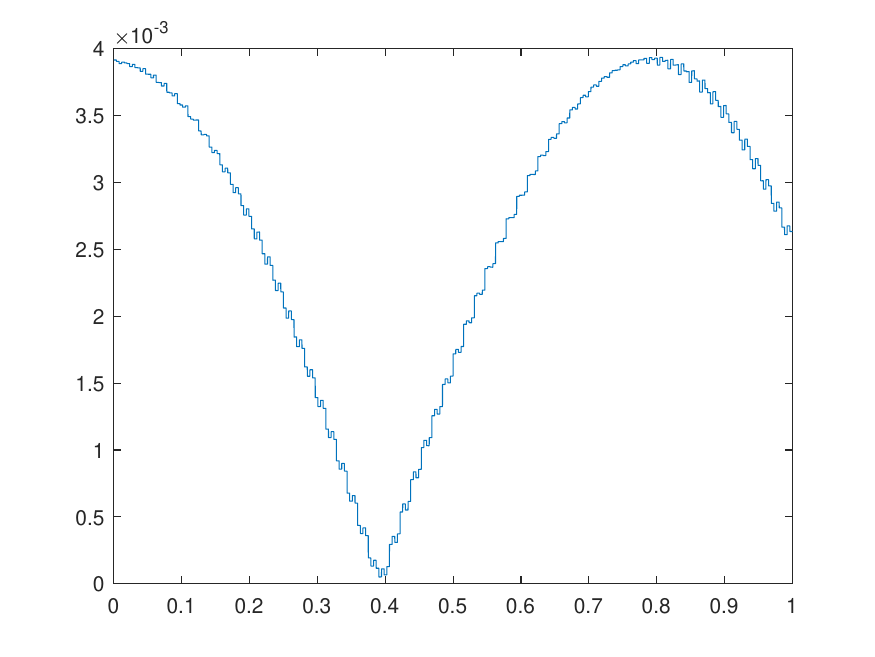}
	\end{subfigure}
	\caption{\cref{Special:ex4}: The top row: $a$ in [0,1] (first panel), $a$ in [0,0.2] (second panel), $\tilde{u}_{H}$ in [0,1] (third panel), and $\tilde{u}_{H}$ in [0,0.2] (fourth panel). The middle row: $\tilde{u}'_{H}$ in [0,1] (first panel), $\tilde{u}'_{H}$ in [0,0.2] (second panel), $a\tilde{u}'_{H}$ in [0,1] (third panel) with $H=\frac{1}{2^7}$, and $f$ in [0,1] (fourth panel). The bottom row: $|\tilde{u}_{H}-\tilde{u}_{H/2}|$ (first panel), $|\tilde{u}'_{H}-\tilde{u}'_{H/2}|$ (second panel), and  $|a\tilde{u}'_{H}-a\tilde{u}'_{H/2}|$ (third panel) in [0,1] with $H=\frac{1}{2^6}$ at all  $x_i=i/N$ for $i=0,\dots, N$ with $N=2^{14}$ in \eqref{fine:xi}.}
	\label{fig:exam4}
\end{figure}

\begin{example}\label{Special:ex6}
	\normalfont
	Consider a discontinuous source term $f(x)$ and a discontinuous diffusion coefficient $a(x)$ with high-contrast jumps below:
	\[
	\begin{split}
	& a(x)= \left( 10^4\chi_{[\frac{i-1}{2^8},\frac{i}{2^8})}
	+10^{-4}\chi_{[\frac{i}{2^8}, \frac{i+1}{2^8})} \right)\left(10+\sin(40x)\right) \quad \text{with} \quad i=1,3,5,\dots,2^8-1,\\
	& f(x)= \left( 1\chi_{[\frac{i-1}{2^8},\frac{i}{2^8})}
	+10^{-3}\chi_{[\frac{i}{2^8}, \frac{i+1}{2^8})} \right)\cos(10x) \quad \text{with} \quad i=1,3,5,\dots,2^8-1.
	\end{split}
	\]
	Note that the exact solution $u$ of \eqref{Model:Problem} is not available and $\|f\|_{L_2(\Omega)} \approx 0.511567 
$.
The results on the numerical solution $\tilde{u}_H$ are presented in \cref{table1:example6,table2:example6,fig:exam6}.
\end{example}
\begin{table}[htbp]
	\caption{Performance of the $l_2$ norm of the error in \cref{Special:ex6} of our proposed derivative-orthogonal wavelet multiscale method on the uniform Cartesian mesh $H$. Note that $N=2^{14}$ in \eqref{fine:xi} for estimating the $l_2$ norm. }
	\centering
	\renewcommand\arraystretch{0.7}	
	\setlength{\tabcolsep}{1mm}{
		\begin{tabular}{c|c|c|c|c|c|c|c|c}
			\hline
			$H$
			&   $\frac{\|\tilde{u}_H-\tilde{u}_{H/2}\|_2}{\|\tilde{u}_{H/2}\|_2}$
			& order &  $\frac{\|\tilde{u}'_H-\tilde{u}'_{H/2}\|_2}{\|\tilde{u}'_{H/2}\|_2}$ & order &   $\frac{\|a\tilde{u}'_H-a\tilde{u}'_{H/2}\|_2}{\|a\tilde{u}'_{H/2}\|_2}$
			&order & $\kappa$ & $\frac{a_{\max}}{a_{\min}}$ \\
			\hline	
$1/2$  &5.1083E+00  &  &1.0242E+01  &  &1.0776E+01  &  &1.0159E+08  &1.2222E+08 \\
$1/2^2$  &4.9138E-01  &3.38  &9.4325E-01  &3.44  &9.4667E-01  &3.51  &1.0232E+08  &1.2222E+08 \\
$1/2^3$   &1.0610E-01  &2.21  &3.5600E-01  &1.41  &3.5931E-01  &1.40  &1.0788E+08  &1.2222E+08 \\
$1/2^4$   &2.7615E-02  &1.94  &1.7106E-01  &1.06  &1.7199E-01  &1.06  &1.0933E+08  &1.2222E+08 \\
$1/2^5$   &6.6613E-03  &2.05  &8.4581E-02  &1.02  &8.5132E-02  &1.01  &1.0955E+08  &1.2222E+08 \\
$1/2^6$   &1.8051E-03  &1.88  &4.2198E-02  &1.00  &4.2466E-02  &1.00  &1.0963E+08  &1.2222E+08 \\
			\hline
	\end{tabular}}
	\label{table1:example6}
\end{table}	

\begin{table}[htbp]
	\caption{Performance of the $l_\infty$ norm of the error in \cref{Special:ex6} of our proposed derivative-orthogonal wavelet multiscale method  on the uniform Cartesian mesh $H$. Note that $N=2^{14}$ in \eqref{fine:xi} for estimating the $l_\infty$ norm.}
	\centering
		\renewcommand\arraystretch{0.7}
	\setlength{\tabcolsep}{0.1mm}{
		\begin{tabular}{c|c|c|c|c|c|c|c|c}
			\hline
			$H$
			&   $\|\tilde{u}_H-\tilde{u}_{H/2}\|_\infty$
			& order &  $\|\tilde{u}'_H-\tilde{u}'_{H/2}\|_\infty$ & order &   $\|a\tilde{u}'_H-a\tilde{u}'_{H/2}\|_\infty$ & order  & $\kappa$ & $\frac{a_{\max}}{a_{\min}}$ \\
			\hline	
$1/2$   &3.6544E+00  &  &3.2872E+01  &  &2.9586E-02  &  &1.0159E+08  &1.2222E+08 \\
$1/2^2$   &1.7343E+00  &1.08  &3.1000E+01  &0.08  &2.8427E-02  &0.06  &1.0232E+08  &1.2222E+08 \\
$1/2^3$   &4.8819E-01  &1.83  &1.6754E+01  &0.89  &1.6690E-02  &0.77  &1.0788E+08  &1.2222E+08 \\
$1/2^4$   &1.2856E-01  &1.92  &8.3965E+00  &1.00  &7.8405E-03  &1.09  &1.0933E+08  &1.2222E+08 \\
$1/2^5$   &3.6352E-02  &1.82  &4.2650E+00  &0.98  &3.9594E-03  &0.99  &1.0955E+08  &1.2222E+08 \\
$1/2^6$   &9.4345E-03  &1.95  &2.1079E+00  &1.02  &1.9999E-03  &0.99  &1.0963E+08  &1.2222E+08 \\
			\hline
	\end{tabular}}
	\label{table2:example6}
\end{table}	

\begin{figure}[htbp]
	\centering
	\begin{subfigure}[b]{0.23\textwidth}
		 \includegraphics[width=4.5cm,height=3.cm]{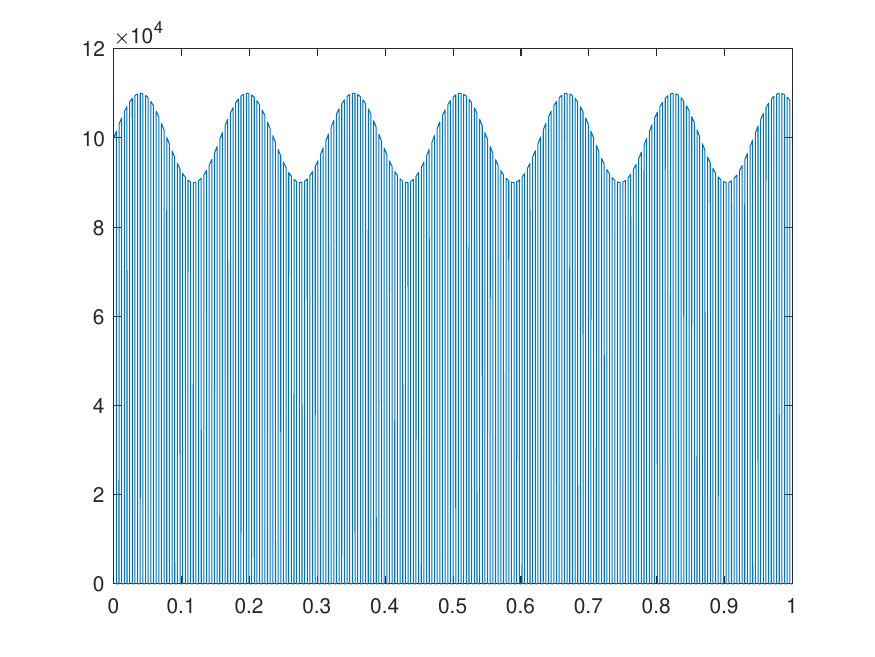}
	\end{subfigure}
	\begin{subfigure}[b]{0.23\textwidth}
		 \includegraphics[width=4.5cm,height=3.cm]{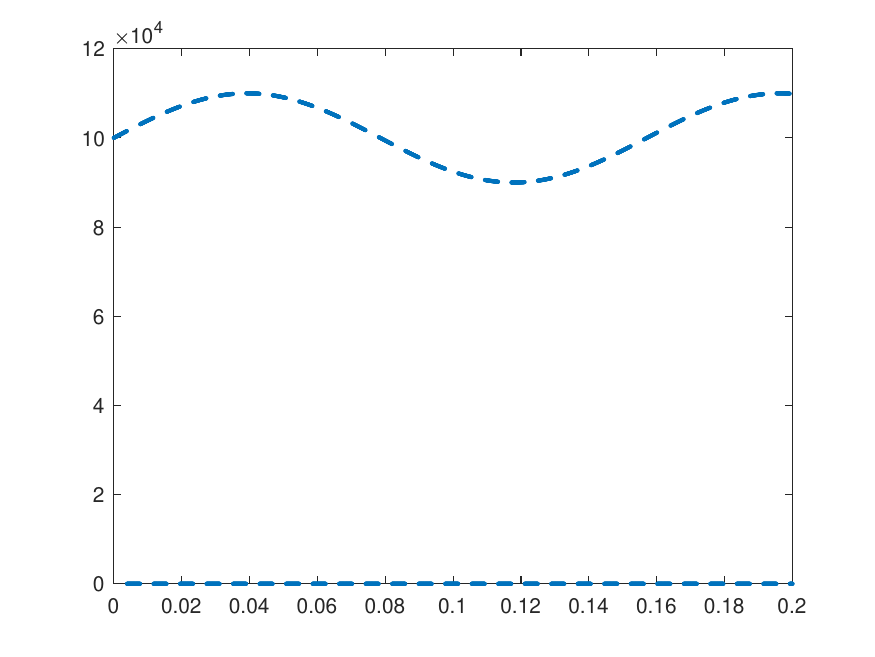}
	\end{subfigure}
	\begin{subfigure}[b]{0.23\textwidth}
		 \includegraphics[width=4.5cm,height=3.cm]{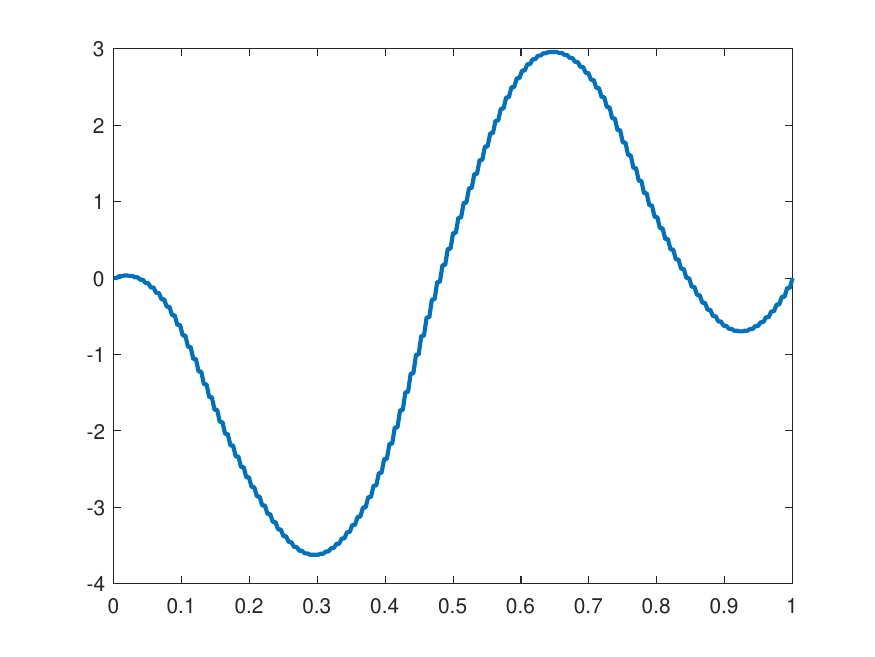}
	\end{subfigure}
	\begin{subfigure}[b]{0.23\textwidth}
		 \includegraphics[width=4.5cm,height=3.cm]{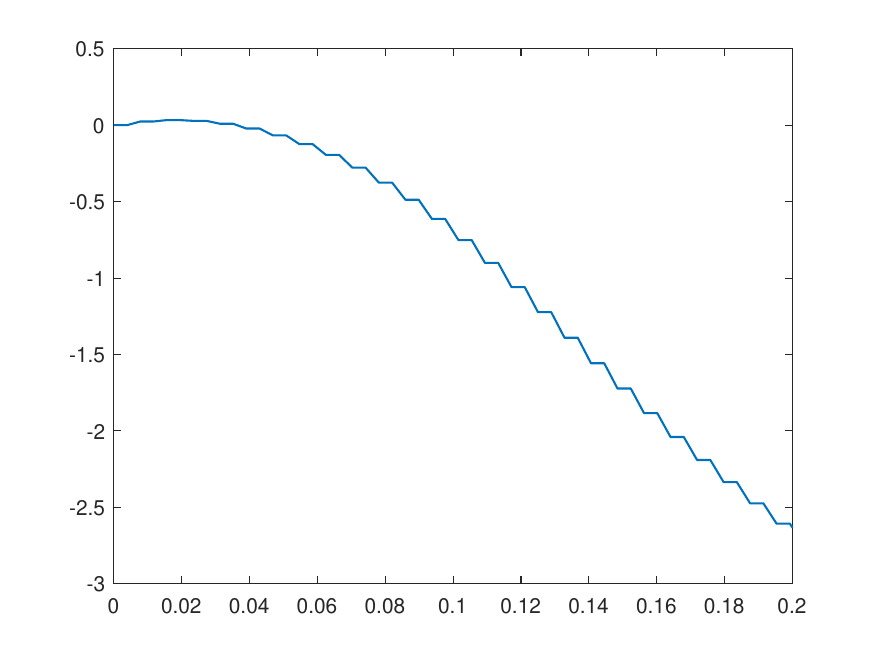}
	\end{subfigure}
	\begin{subfigure}[b]{0.23\textwidth}
		 \includegraphics[width=4.5cm,height=3.cm]{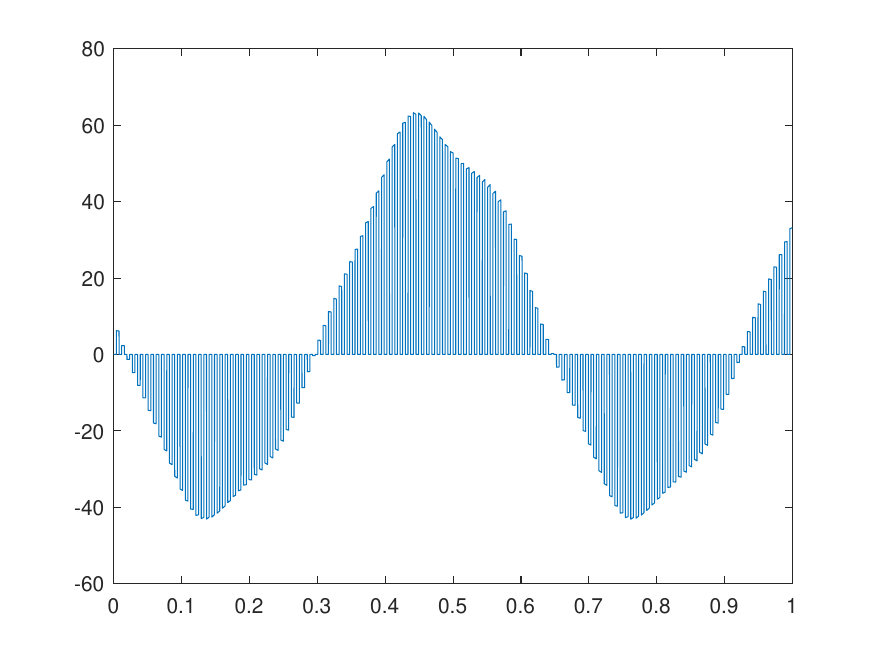}
	\end{subfigure}
	\begin{subfigure}[b]{0.23\textwidth}
		 \includegraphics[width=4.5cm,height=3.cm]{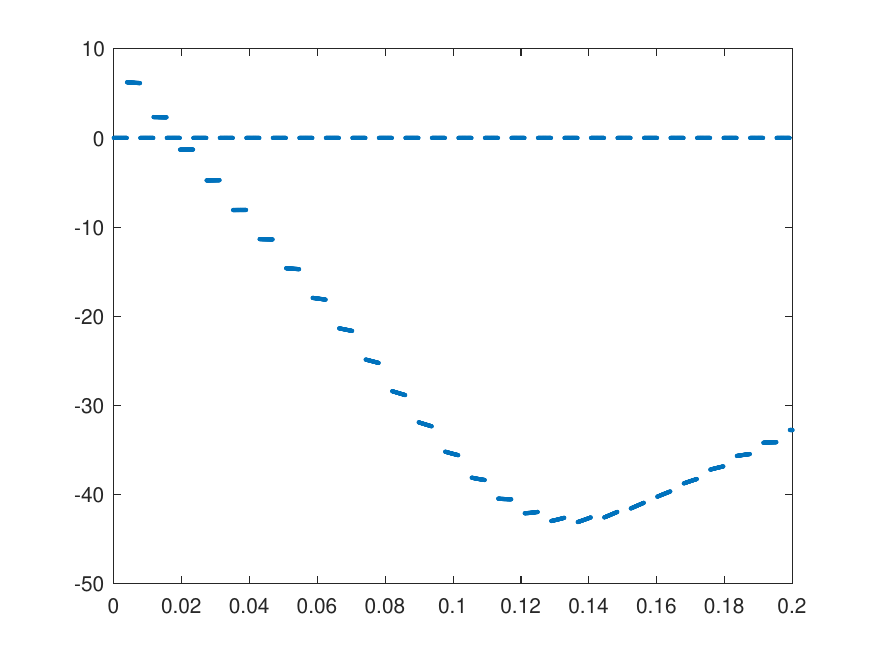}
	\end{subfigure}
	\begin{subfigure}[b]{0.23\textwidth}
		 \includegraphics[width=4.5cm,height=3.cm]{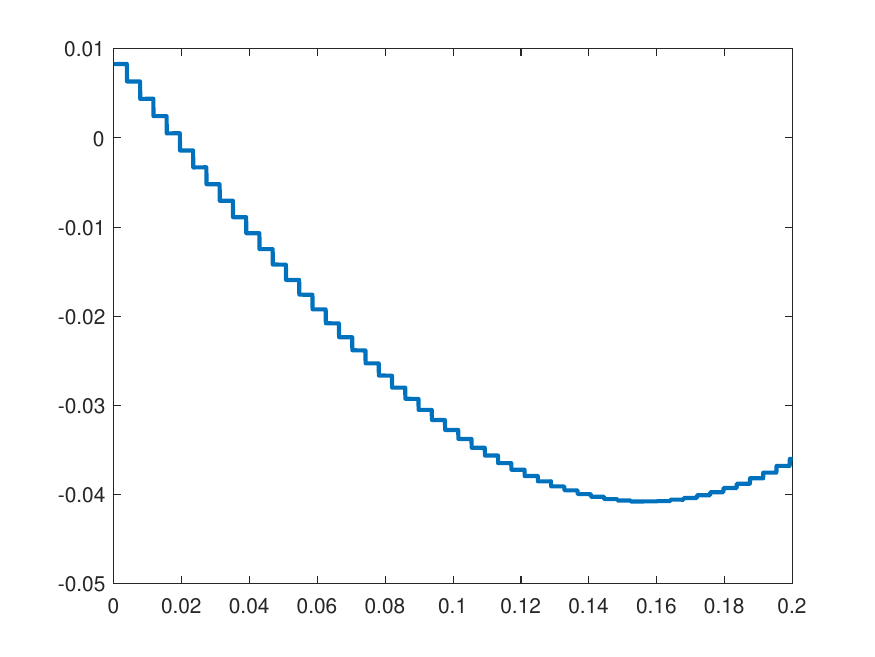}
	\end{subfigure}
	\begin{subfigure}[b]{0.23\textwidth}
		 \includegraphics[width=4.5cm,height=3.cm]{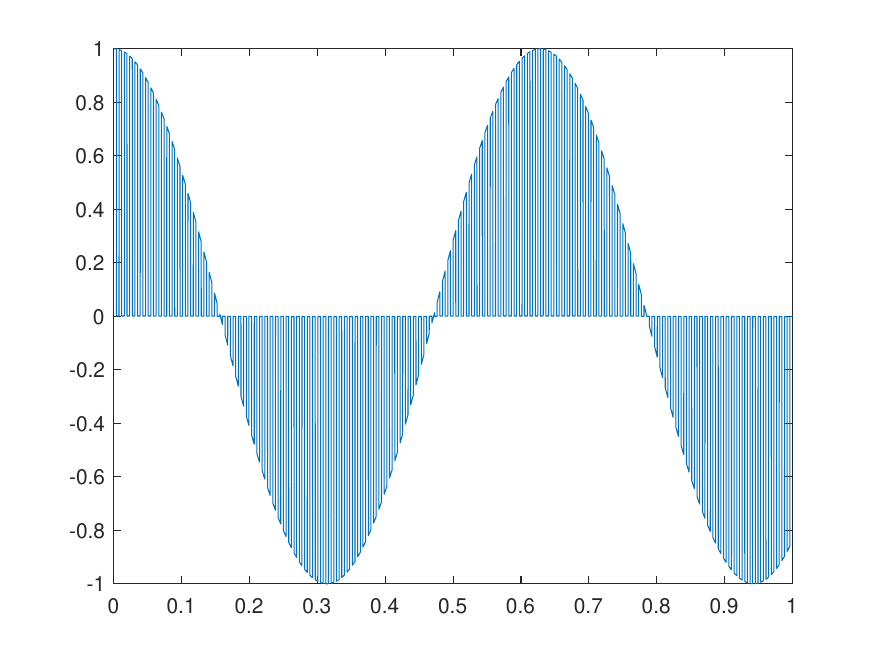}
	\end{subfigure}
	\begin{subfigure}[b]{0.23\textwidth}
		 \includegraphics[width=4.5cm,height=3.cm]{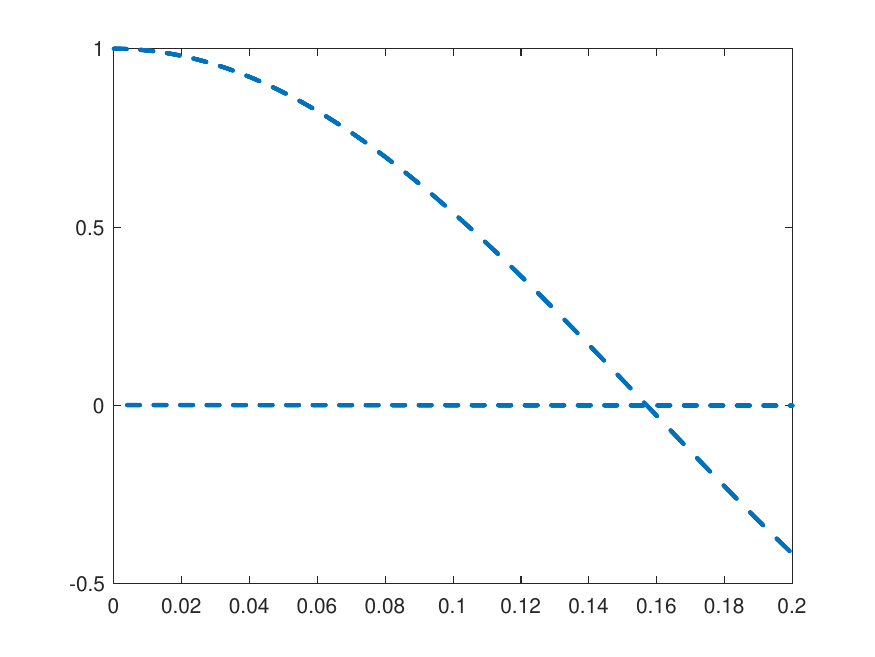}
	\end{subfigure}
	\begin{subfigure}[b]{0.23\textwidth}
		 \includegraphics[width=4.5cm,height=3.cm]{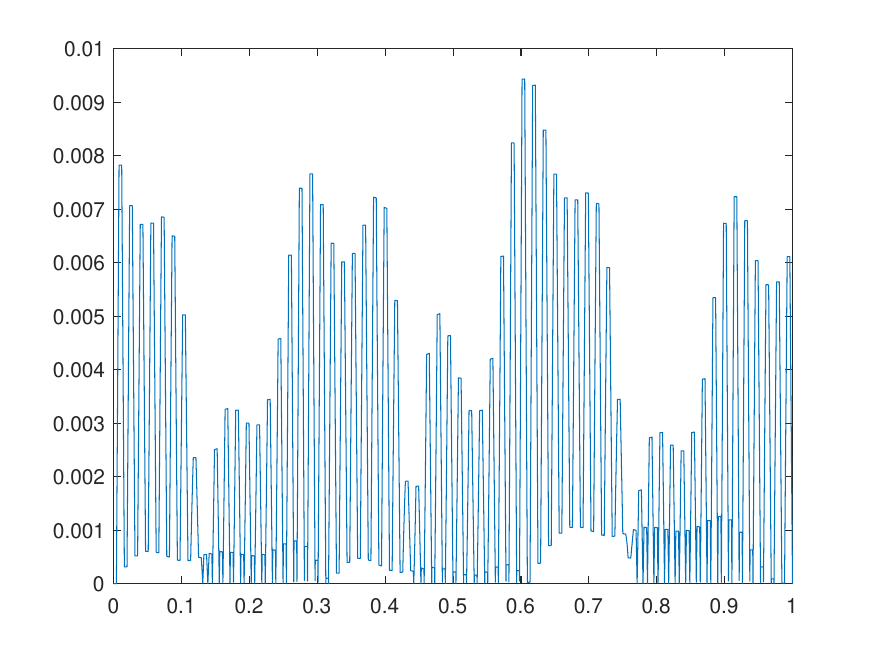}
	\end{subfigure}
	\begin{subfigure}[b]{0.23\textwidth}
		 \includegraphics[width=4.5cm,height=3.cm]{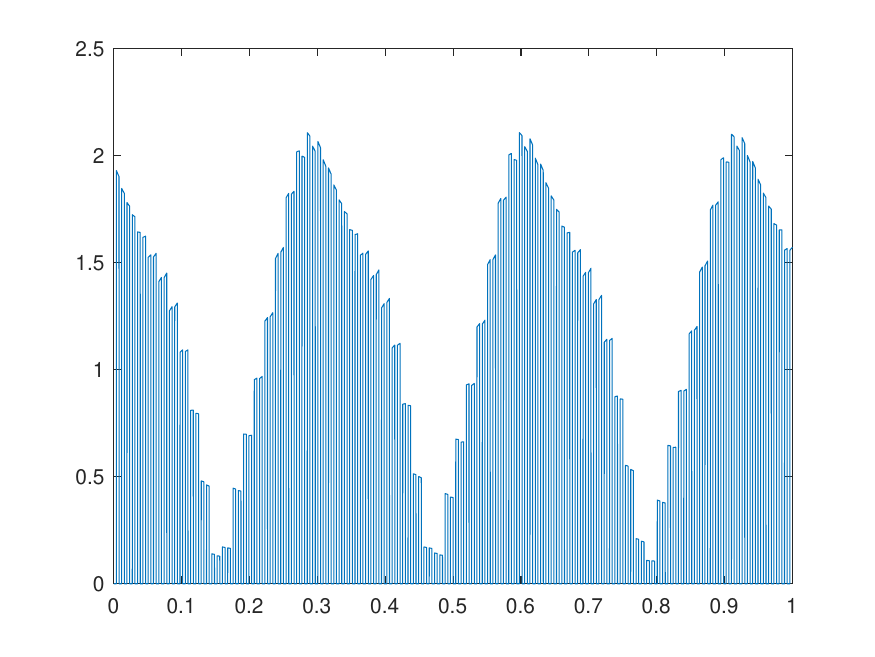}
	\end{subfigure}
	\begin{subfigure}[b]{0.23\textwidth}
		 \includegraphics[width=4.5cm,height=3.cm]{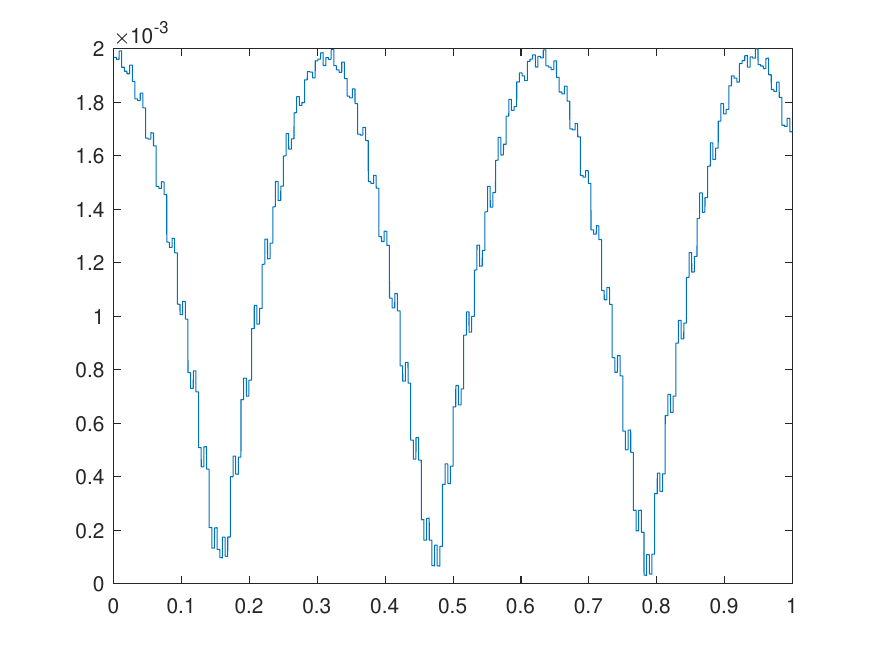}
	\end{subfigure}
	\caption{\cref{Special:ex6}: The top row: $a$ in [0,1] (first panel), $a$ in [0,0.2] (second panel), $\tilde{u}_{H}$ in [0,1] (third panel), and $\tilde{u}_{H}$ in [0,0.2] (fourth panel). The middle row: $\tilde{u}'_{H}$ in [0,1] (first panel), $\tilde{u}'_{H}$ in [0,0.2] (second panel), $a\tilde{u}'_{H}$ in [0,0.2] (third panel) with $H=\frac{1}{2^7}$, and $f$ in [0,1] (fourth panel). The bottom row: $f$ in [0,0.2] (first panel), $|\tilde{u}_{H}-\tilde{u}_{H/2}|$ (second panel), $|\tilde{u}'_{H}-\tilde{u}'_{H/2}|$ (third panel), and  $|a\tilde{u}'_{H}-a\tilde{u}'_{H/2}|$ (fourth panel) in [0,1] with $H=\frac{1}{2^6}$ at all  $x_i=i/N$ for $i=0,\dots, N$ with $N=2^{14}$ in \eqref{fine:xi}.}
	\label{fig:exam6}
\end{figure}

\section{Conclusion}\label{Special:sec:Conclu}
In this paper, we consider $-\nabla \cdot \left(a \nabla  u\right)=f$ with the zero Dirichlet boundary condition, $0<a_{\min}\le a\le  a_{\max}<\infty$, and $f\in L_2(\Omega)$ in the one-dimensional domain $\Omega=(0,1)$. To numerically compute a precise  and convincing approximated solution on the coarse mesh size $H$,  we present an innovative derivative-orthogonal wavelet multiscale method to calculate the numerical solution $\tilde{u}_H$. The main contributions of our proposed method are as follows:
\\
\textbf{Contributions in the theoretical analysis:}
\begin{itemize}
	\item We prove in \cref{TTY2} that the condition number $\kappa$ of the stiffness matrix of our proposed method satisfies that $\kappa \le \frac{ a_{\max} }{a_{\min}}$ for any coarse mesh size $H$.

	\item We establish in \cref{Thm:u:prime:L2} that the energy norm of the error of our proposed method attains the first-order
	convergence rate for any coarse mesh size $H$. Precisely, we prove that $\|\sqrt{a}(u'-(\tilde{u}_H)')\|_{L_2(\Omega)} \leq CH$, where $C=\frac{2}{\sqrt{a_{\min}}}\|f\|_{L_2(\Omega)}$.
	\item We prove in \cref{Thm:u:L2} that the $L_2$ norm of the error of our proposed method achieves the second-order
	convergence rate for any coarse mesh size $H$. Precisely, we prove that $\|u-\tilde{u}_{H}\|_{L_2(\Omega)} \leq CH^2$, where $C= \frac{4}{a_{\min}}\|f\|_{L_2(\Omega)}$.

	\item The explicit expressions of  constants $C$ in error estimates indicate that the energy and
	$L_2$-norm errors of our proposed method are independent to $a_{\max}$. Hence, our method can handle simultaneously the rough diffusion coefficient $a$ and the rough source term $f$, as long as $\|f\|_{L_2(\Omega)}$ is not huge.

\item We prove in \cref{Thm:u:infty}
that our numerical solutions $\tilde{u}_H$ have the interpolation property in \eqref{interpolation:1}.

\end{itemize}
\textbf{Contributions in the numerical computation:}
\begin{itemize}
	\item We provide
  several examples ($u$ is available/unavailable,  $a$ is continuous/discontinuous, and $\lim_{x\to 0^+}a(x)=\infty$) to measure $\frac{\|\tilde{u}_H-u\|_2}{\|u\|_2}$, $\frac{\|\tilde{u}'_H-u'\|_2}{\|u'\|_2}$, $\frac{\|a\tilde{u}'_H-au'\|_2}{\|au'\|_2}$, $\|\tilde{u}_H-u\|_\infty$, $\|\tilde{u}'_H-u'\|_\infty$,  and $\|a\tilde{u}'_H-au'\|_\infty$. The numerical results
	confirm bounded condition numbers and first-order/second-order convergence rates of the proposed method for elliptic equations with high-frequency and high-contrast coefficients.

\item Particularly,  the coefficient $a$ satisfies that $a_{\max}=\sup_{x\in \Omega} a(x)=\infty$ in \cref{Special:ex2:infty}, and  the coefficient $a$ and the  source term $f$ both exhibit discontinuous and high-frequency oscillations and  high-contrast jumps with $\|f\|_{L_2(\Omega)} \approx 0.5$ in \cref{Special:ex6}.  Our proposed method continues to perform robustly.

	\item Our derivative-orthogonal wavelet multiscale method is straightforward to be implemented, allowing for easy repetition and verification of the numerical results.
\end{itemize}
In the future, we plan to extend our proposed derivative-orthogonal wavelet multiscale method presented in this paper to the 2D case of  \eqref{Model:Problem} by modifying the regular and special basis functions.
\noindent\textbf{Acknowledgment}

Michael Neilan (neilan@pitt.edu, Department of Mathematics, University of Pittsburgh, Pittsburgh, PA 15260 USA) provided the initial idea and proof of \cref{Thm:u:infty} (the interpolation property) and recommended testing  \cref{Special:ex2:infty} with $\lim_{x\to 0^+}a(x)=\infty$ to verify the robustness of the proposed derivative-orthogonal wavelet multiscale method.


\begin{thebibliography}{99}
		
		
		
				


	


\bibitem{AliMankad2020} A.~Ali, H.~Mankad, F.~Pereira, and F.~S.~Sousa,  The multiscale perturbation method for second order elliptic equations.
\emph{Appl. Math. Comput.} \textbf{387} (2020), 125023.


\bibitem{Allaire2005} G.~Allaire, and R.~Brizzi, A multiscale finite element method for numerical homogenization. \emph{SIAM Multiscale Model. Simul.} \textbf{4} (2005),  790-812.



\bibitem{ArbogastTao2013} T.~Arbogast, Z.~Tao, and H.~Xiao, Multiscale mortar mixed methods for heterogeneous elliptic problems.
\emph{Contemp. Math.} \textbf{586} (2013), 9-21.




\bibitem{ArbogastXiao2013} T.~Arbogast and H.~Xiao,  A Multiscale Mortar Mixed Space Based on Homogenization for Heterogeneous Elliptic Problems.
\emph{SIAM J. Numer. Anal.} \textbf{51} (2013), 377-399.




\bibitem{TArbHXiao2015} T.~Arbogast and H.~Xiao,  Two-level mortar domain decomposition preconditioners for heterogeneous elliptic problems.
\emph{Comput. Methods Appl. Mech. Engrg.} \textbf{292} (2015), 221-242.



\bibitem{Blumen85}
M.~Blumenfeld,
The regularity of interface-problems on corner-regions.	\emph{Lecture Notes in Math.} \textbf{1121} (1985), 38-54.




\bibitem{Butler2020} R.~Butler, T.~Dodwell, A.~Reinarz, A.~Sandhu, R.~Scheichl, and L.~Seelinger,  High-performance dune modules for solving large-scale, strongly anisotropic elliptic problems with applications to aerospace composites.
\emph{Comput. Phys. Commun.} \textbf{249} (2020), 106997.






\bibitem{CaKi01} Z.~Cai and S.~Kim,
A finite element method using singular functions for the Poisson equation: corner singularities.
\emph{SIAM J. Numer. Anal.} \textbf{39} (2001), no. 1,  286-299.






\bibitem{ChuGrahamHou2010} C.~C.~Chu, I.~G.~Graham, and T.~Y.~Hou,
A new multiscale finite element method for high-contrast elliptic interface problems. \emph{Math. Comput.} \textbf{79} (2010), 1915-1955.








\bibitem{EngwerHenning2019} C.~Engwer, P.~Henning, A.~M\aa lqvist, and D.~Peterseim,
Efficient implementation of the localized orthogonal decomposition method. \emph{Comput. Methods Appl. Mech. Engrg.} \textbf{350} (2019), 123-153.





\bibitem{FGW73} G.~J.~Fix, S.~Gulati and G.~I.~Wakoff,
On the use of singular functions with finite element approximations.
\emph{J. Comput. Phys.} \textbf{13} (1973), 209-228.




\bibitem{FuChungLi2019} S.~Fu, E.~Chung, and G.~Li, Edge multiscale methods for elliptic problems with heterogeneous coefficients. \emph{J. Comput. Phys.} \textbf{396} (2019),  228-242.






\bibitem{GuiraAusas2019} R.~T.~Guiraldello, R.~F.~Ausas, F.~S.~Sousa, F.~Pereira, and G.~C.~Buscaglia, Interface spaces for the Multiscale Robin Coupled Method in reservoir simulation.
\emph{Math. Comput. Simul.} \textbf{164} (2019), 103-119.


\bibitem{Hanbook}
B.~Han, Framelets and wavelets: algorithms, analysis, and applications. \emph{Applied and Numerical Harmonic Analysis}. Birkh\"auser/Springer, Cham, 2017. xxxiii + 724 pp.	

		
\bibitem{HanMichelle2019} B.~Han and M.~Michelle,
Derivative-orthogonal Riesz wavelets in Sobolev spaces with
applications to differential equations.
\emph{Appl. Comput. Harmon. Anal.} \textbf{47} (2019), 759-794.		






\bibitem{Hellmanvist2017} F.~Hellman and A.~M\aa lqvist,
Contrast independent localization of multiscale problems. \emph{SIAM Multiscale Model. Simul.} \textbf{15} (2017), 1325-1355.




\bibitem{HouWu1997} T.~Y.~Hou and X.~H.~Wu,
A multiscale finite element method for elliptic problems in composite materials and porous media. \emph{J. Comput. Phys.} \textbf{134} (1997), 169-189.



\bibitem{HouWuCai1999} T.~Y.~Hou, X.~H.~Wu, and Z.~Cai,
Convergence of a multiscale finite element method for elliptic problems with rapidly oscillating coefficients. \emph{Math. Comput.} \textbf{68} (1999), 913-943.




\bibitem{HouHwang2017} T.~Y.~Hou, F.~N.~Hwang, P.~Liu, and C.~C.~Yao, An iteratively adaptive multi-scale finite element method for elliptic PDEs with rough coefficients. \emph{J. Comput. Phys.} \textbf{336} (2017),  375-400.




\bibitem{Jaramillo2022} A.~Jaramillo, R.~T.~Guiraldello, S.~Paz, R.~F.~Ausas, F.~S.~Sousa, F.~Pereira, and G.~C.~Buscaglia, Towards HPC simulations of billion-cell reservoirs by multiscale mixed methods.
\emph{Comput. Geosci.} \textbf{26} (2022), 481-501.


		

\bibitem{JennyLeeTchelepi2004} P.~Jenny, S.~H.~Lee, and H.~A.~Tchelepi, Adaptive multiscale finite-volume method for multiphase flow and transport in porous media. \emph{SIAM Multiscale Model. Simul.} \textbf{3} (2004), 50-64.




\bibitem{Kell71} R.~B.~Kellogg,
Singularities in interface problems.
\emph{Numerical Solution of Partial Differential Equations-\uppercase\expandafter{\romannumeral2}} (1971), 351-400.


\bibitem{Kell72} R.~B.~Kellogg,
Higher order singularities for interface problems.
\emph{The Mathematical Foundations of the Finite Element Method with Applications to Partial Differential Equations} (1972), 589-602.



\bibitem{Kell75}  R.~B.~Kellogg,
On the Poisson equation with intersecting interfaces.
\emph{Appl. Anal.} \textbf{4} (1975), no. 2,  101-129.



\bibitem{KCPK07} S.~Kim, Z.~Cai, J.~Pyo and S.~Kong,
A finite element method
using singular functions: interface problems.
\emph{Hokkaido Math. J.} \textbf{36} (2007), no. 4, 815-836.


\bibitem{KiKo06} S.~Kim and S.~Kong,
A finite element method dealing the singular points with a cut-off function.
\emph{J. Appl. Math. Comput.} \textbf{21} (2006), no. 1-2,  141-152.




\bibitem{Kippe2008} V.~Kippe,  J.~E.~Aarnes, and K.~A.~Lie, A comparison of multiscale methods for elliptic problems in porous media flow.
\emph{Comput. Geosci.} \textbf{12} (2008), 377-398.









\bibitem{LiHu2021} G.~Li and J.~Hu, Wavelet-based edge multiscale parareal algorithm for parabolic equations with heterogeneous coefficients and rough initial data. \emph{J. Comput. Phys.} \textbf{444} (2021),  110572.








\bibitem{lqvistPersson2018} A.~M\aa lqvist and A.~Persson,
Multiscale techniques for parabolic equations. \emph{Numer. Math.} \textbf{138} (2018), 191-217.






\bibitem{lqvistPeterseim2014} A.~M\aa lqvist and D.~Peterseim,
Localization of elliptic multiscale problems. \emph{Math. Comput.} \textbf{83} (2014), 2583-2603.







\bibitem{Minev2018}
P.~Minev, S.~Srinivasan, and P.~N.~Vabishchevich, Flux formulation of parabolic equations with highly heterogeneous coefficients. \emph{J. Comput. Appl. Math.} \textbf{340} (2018), 582-601.




\bibitem{Nic90} S.~Nicaise,
Polygonal interface problems: higher regularity
results.
\emph{Commun. Partial. Differ. Equ.} \textbf{15} (1990), no. 10, 1475-1508.


\bibitem{NS94} S.~Nicaise and  A.~M.~Sandig,
General interface problems-\uppercase\expandafter{\romannumeral1}.
\emph{Math. Methods Appl. Sci.} \textbf{17} (1994), 395-429.




\bibitem{Nolen2008} J.~Nolen, G.~Papanicolaou, and O.~Pironneau, A framework for adaptive multiscale methods for elliptic problems. \emph{SIAM Multiscale Model. Simul.} \textbf{7} (2008), 171-196.





\bibitem{Petz01} M.~Petzoldt,
Regularity results for Laplace interface problems in two dimensions.
\emph{J. Anal. Appl.} \textbf{20} (2001), no. 2,  431-455.

\bibitem{Petz02} M.~Petzoldt,
A posteriori error estimators for elliptic equations with
discontinuous coefficients.
\emph{Adv. Comput. Math.} \textbf{16} (2002), no. 1, 47-75.




\bibitem{PietroErn2012} D.~A.~Di Pietro and A.~Ern, Mathematical aspects of discontinuous Galerkin methods. \emph{Springer Berlin, Heidelberg.} 2012.



\bibitem{Rasaei2008} M.~R.~Rasaei and M.~Sahimi, Upscaling and simulation of waterflooding in heterogeneous reservoirs using wavelet transformations: application to the SPE-10 model.
\emph{Transp. Porous. Med.} \textbf{72} (2008), 311-338.






\bibitem{Srinivasan2016} S.~Srinivasan, R.~Lazarov, and P.~Minev, Multiscale direction-splitting algorithms for parabolic equations with highly heterogeneous coefficients. \emph{Comput. Math. Appl.} \textbf{72} (2016), 1641-1654. 	





\bibitem{Tahmasebi2018} P.~Tahmasebi and S.~Kamrava, A multiscale approach for geologically and flow consistent modeling.
\emph{Transp. Porous. Med.} \textbf{124} (2018), 237-261.





\bibitem{Vazqu07} J.~L.~V{\'a}zquez, The Porous medium equation: mathematical theory. \emph{Clarendon Press.} 2007.








\bibitem{ZhangCiHou2015} Z.~Zhang, M.~Ci, and T.~Y.~Hou,
A multiscale data-driven stochastic method for elliptic PDEs with random coefficients. \emph{SIAM Multiscale Model. Simul.} \textbf{13} (2015), 173-204.




	
		





	\end{thebibliography}
\end{document}